\title[Pluriassociative algebras II]{Pluriassociative algebras II: \\
The polydendriform operad and related operads}
\keywords{Tree; Rewrite rule; Associative algebra; Operad; Koszul operad;
Koszul duality; Diassociative operad; Dendriform operad.}
\subjclass[2010]{05E99, 05C05, 18D50.}
\date{\today}
\author{Samuele Giraudo}
\address{Laboratoire d'Informatique Gaspard-Monge, Université Paris-Est
    Marne-la-Vallée, 5 boulevard Descartes, Champs-sur-Marne,
    77454 Marne-la-Vallée cedex 2, France}
\email{samuele.giraudo@u-pem.fr}
\numberwithin{equation}{subsection}
\renewcommand{\leq}{\leqslant}
\renewcommand{\geq}{\geqslant}
\newtheorem{Theoreme}{Theorem}[subsection]
\newtheorem{Proposition}[Theoreme]{Proposition}
\newtheorem{Lemme}[Theoreme]{Lemma}
\newcommand{\Aca}{\mathcal{A}}
\newcommand{\Cca}{\mathcal{C}}
\newcommand{\Dca}{\mathcal{D}}
\newcommand{\Fca}{\mathcal{F}}
\newcommand{\Gca}{\mathcal{G}}
\newcommand{\Hca}{\mathcal{H}}
\newcommand{\Oca}{\mathcal{O}}
\newcommand{\Mca}{\mathcal{M}}
\newcommand{\Vca}{\mathcal{V}}
\newcommand{\Zca}{\mathcal{Z}}
\newcommand{\Cfr}{\mathfrak{c}}
\newcommand{\Efr}{\mathfrak{e}}
\newcommand{\Rfr}{\mathfrak{r}}
\newcommand{\Sfr}{\mathfrak{s}}
\newcommand{\Tfr}{\mathfrak{t}}
\newcommand{\Gfr}{\mathfrak{g}}
\newcommand{\Ksf}{{\mathsf{K}}}
\newcommand{\K}{\mathbb{K}}
\newcommand{\EnsNat}{\mathbb{N}}
\newcommand{\GenLibre}{\mathfrak{G}}
\newcommand{\RelLibre}{\mathfrak{R}}
\newcommand{\GenDias}{\GenLibre_{\Dias_\gamma}}
\newcommand{\GenDendr}{\GenLibre_{\Dendr_\gamma}}
\newcommand{\GenTDendr}{\GenLibre_{\TDendr_\gamma}}
\newcommand{\GenAs}{\GenLibre_{\As_\gamma}}
\newcommand{\GenDAs}{\GenLibre_{\DAs_\gamma}}
\newcommand{\GenDup}{\GenLibre_{\Dup_\gamma}}
\newcommand{\RelDias}{\RelLibre_{\Dias_\gamma}}
\newcommand{\RelDendr}{\RelLibre_{\Dendr_\gamma}}
\newcommand{\RelAs}{\RelLibre_{\As_\gamma}}
\newcommand{\RelDAs}{\RelLibre_{\DAs_\gamma}}
\newcommand{\RelDup}{\RelLibre_{\Dup_\gamma}}
\newcommand{\RelTDendr}{\RelLibre_{\TDendr_\gamma}}
\newcommand{\Alg}{\Aca}
\newcommand{\AlgLibre}{\Fca}
\newcommand{\Dias}{\mathsf{Dias}}
\newcommand{\Dendr}{\mathsf{Dendr}}
\newcommand{\As}{\mathsf{As}}
\newcommand{\DAs}{\mathsf{DAs}}
\newcommand{\Dup}{\mathsf{Dup}}
\newcommand{\DupDendr}{\mathsf{D}}
\newcommand{\DeuxAs}{\mathit{2as}}
\newcommand{\Trias}{\mathsf{Trias}}
\newcommand{\TDendr}{\mathsf{TDendr}}
\newcommand{\Com}{\mathsf{Com}}
\newcommand{\Zin}{\mathsf{Zin}}
\newcommand{\Leib}{\mathsf{Leib}}
\newcommand{\Lie}{\mathsf{Lie}}
\newcommand{\OpLibre}{\mathbf{Free}}
\newcommand{\GDias}{\dashv}
\newcommand{\DDias}{\vdash}
\newcommand{\GDendrA}{\leftharpoonup}
\newcommand{\DDendrA}{\rightharpoonup}
\newcommand{\GDendr}{\prec}
\newcommand{\DDendr}{\succ}
\newcommand{\MAs}{\star}
\newcommand{\MAsA}{\triangle}
\newcommand{\MDAsA}{\wasylozenge}
\newcommand{\MDAs}{\diamond}
\newcommand{\GDup}{\hookleftarrow}
\newcommand{\DDup}{\hookrightarrow}
\newcommand{\MTDendr}{\wedge}
\newcommand{\OpAsDendrA}{\bullet}
\newcommand{\OpAsDendr}{\odot}
\newcommand{\ProdZin}{\shuffle}
\newcommand{\Min}{\downarrow}
\newcommand{\Max}{\uparrow}
\newcommand{\Rel}{\leftrightarrow}
\newcommand{\Recr}{\to}
\newcommand{\Cat}{\mathrm{cat}}
\newcommand{\Nar}{\mathrm{nar}}
\newcommand{\Schr}{\mathrm{schr}}
\newcommand{\Cacher}[1]{}
\newcommand{\Sloane}[1]{\href{http://oeis.org/#1}{{\bf #1}}}
\newcommand{\Feuille}{%
\begin{tikzpicture}
    \node(1)[Feuille]at(0,0){};
    \draw[Arete](1)--(0,.2);
\end{tikzpicture}}
\newcommand{\Noeud}{%
\begin{split}
\begin{tikzpicture}[xscale=.5,yscale=.4]
    \node[Feuille](0)at(0.50,-.75){};
    \node[Feuille](2)at(1.5,-.75){};
    \node[Noeud](1)at(1.00,0.00){};
    \draw[Arete](0)--(1);
    \draw[Arete](2)--(1);
    \node(r)at(1.00,.85){};
    \draw[Arete](r)--(1);
\end{tikzpicture}
\end{split}}
\newcommand{\NoeudTexte}{%
\raisebox{-.22em}{\scalebox{.7}{%
\begin{tikzpicture}[xscale=.4,yscale=.35]
    \node[Feuille](0)at(0.50,-.75){};
    \node[Feuille](2)at(1.5,-.75){};
    \node[Noeud](1)at(1.00,0.00){};
    \draw[Arete](0)--(1);
    \draw[Arete](2)--(1);
    \node(r)at(1.00,.85){};
    \draw[Arete](r)--(1);
\end{tikzpicture}}}}
\newcommand{\ArbreBinGDeux}[1]{%
\begin{split}
\begin{tikzpicture}[yscale=.25,xscale=.25]
    \node[Feuille](0)at(0.00,-3.33){};
    \node[Feuille](2)at(2.00,-3.33){};
    \node[Feuille](4)at(4.00,-1.67){};
    \node[Noeud](1)at(1.00,-1.67){};
    \node[Noeud](3)at(3.00,0.00){};
    \draw[Arete](0)--(1);
    \draw[Arete](1)edge[]node[EtiqArete]
            {\begin{math}#1\end{math}}(3);
    \draw[Arete](2)--(1);
    \draw[Arete](4)--(3);
    \node(r)at(3.00,1.25){};
    \draw[Arete](r)--(3);
\end{tikzpicture}
\end{split}}
\newcommand{\ArbreBinDDeux}[1]{%
\begin{split}
\begin{tikzpicture}[yscale=.25,xscale=.25]
    \node[Feuille](0)at(0.00,-1.67){};
    \node[Feuille](2)at(2.00,-3.33){};
    \node[Feuille](4)at(4.00,-3.33){};
    \node[Noeud](1)at(1.00,0.00){};
    \node[Noeud](3)at(3.00,-1.67){};
    \draw[Arete](0)--(1);
    \draw[Arete](2)--(3);
    \draw[Arete](3)edge[]node[EtiqArete]
            {\begin{math}#1\end{math}}(1);
    \draw[Arete](4)--(3);
    \node(r)at(1.00,1.25){};
    \draw[Arete](r)--(1);
\end{tikzpicture}
\end{split}}
\newcommand{\ArbreBinValue}[4]{%
\begin{split}
\begin{tikzpicture}[xscale=.5,yscale=.4]
    \node(0)at(-.50,-1.50){\begin{math}#3\end{math}};
    \node(2)at(2.50,-1.50){\begin{math}#4\end{math}};
    \node[Noeud](1)at(1.00,.50){};
    \draw[Arete](0)edge[]node[EtiqArete]
        {\begin{math}#1\end{math}}(1);
    \draw[Arete](2)edge[]node[EtiqArete]
        {\begin{math}#2\end{math}}(1);
    \node(r)at(1.00,1.5){};
    \draw[Arete](r)--(1);
\end{tikzpicture}
\end{split}}
\newcommand{\ArbreBin}[2]{%
\begin{split}
\begin{tikzpicture}[xscale=.4,yscale=.3]
    \node(0)at(-.50,-1.50){\begin{math}#1\end{math}};
    \node(2)at(2.50,-1.50){\begin{math}#2\end{math}};
    \node[Noeud](1)at(1.00,.50){};
    \draw[Arete](0)--(1);
    \draw[Arete](2)--(1);
    \node(r)at(1.00,1.75){};
    \draw[Arete](r)--(1);
\end{tikzpicture}
\end{split}}
\definecolor{Noir}{RGB}{0,0,0}
\definecolor{Blanc}{RGB}{255,255,255}
\definecolor{Rouge}{RGB}{205,35,38}
\definecolor{Bleu}{RGB}{2,60,195}
\definecolor{Vert}{RGB}{23,163,1}
\definecolor{Violet}{RGB}{181,18,225}
\definecolor{Orange}{RGB}{255,113,15}
\definecolor{Marron}{RGB}{52,46,0}
\tikzstyle{Noeud}=[circle,draw=Bleu!80,fill=Bleu!8,inner sep=1pt,
\tikzstyle{Arete}=[Rouge!80,cap=round,line width=1.25pt]
\tikzstyle{Feuille}=[rectangle,draw=Noir!70,fill=Noir!16,
\tikzstyle{Clair}=[fill=Blanc]
\tikzstyle{Marque1}=[draw=Vert!100,fill=Vert!30]
\tikzstyle{Marque2}=[draw=Orange!100,fill=Orange!40]
\tikzstyle{Marque3}=[draw=Rouge!100,fill=Rouge!50]
\tikzstyle{EtiqArete}=[regular polygon,regular polygon sides=6,
\tikzstyle{NoeudSchr}=[Noeud,draw=Vert!80,fill=Vert!8]
\tikzstyle{NoeudCor}=[Noeud,draw=Marron!80,fill=Marron!8]
\begin{document}

\maketitle

\begin{abstract}
    Dendriform algebras form a category of algebras recently introduced
    by Loday. A dendriform algebra is a vector space endowed with two
    nonassociative binary operations satisfying some relations. Any
    dendriform algebra is an algebra over the dendriform operad, the
    Koszul dual of the diassociative operad. We introduce here, by
    adopting the point of view and the tools offered by the theory of
    operads, a generalization on a nonnegative integer parameter
    $\gamma$ of dendriform algebras, called $\gamma$-polydendriform
    algebras, so that $1$-polydendriform algebras are dendriform
    algebras. For that, we consider the operads obtained as the Koszul
    duals of the $\gamma$-pluriassociative operads introduced by the
    author in a previous work. In the same manner as dendriform algebras are suitable
    devices to split associative operations into two parts,
    $\gamma$-polydendriform algebras seem adapted structures to split
    associative operations into $2\gamma$ operation so that some partial
    sums of these operations are associative. We provide a complete study
    of the $\gamma$-polydendriform operads, the underlying operads of
    the category of $\gamma$-polydendriform algebras. We exhibit several
    presentations by generators and relations, compute their Hilbert
    series, and construct free objects in the corresponding categories.
    We also provide consistent generalizations on a nonnegative integer
    parameter of the duplicial, triassociative and tridendriform operads,
    and of some operads of the operadic butterfly.
\end{abstract}

\begin{footnotesize}
    \tableofcontents
\end{footnotesize}

\section*{Introduction}
Associative algebras play an obvious and primary role in algebraic
combinatorics. In recent years, the study of natural operations on
certain sets of combinatorial objects has given rise to more or less
complicated algebraic structures on the vector spaces spanned by these
sets. A primordial point to observe is that these structures maintain
furthermore many links with combinatorics, combinatorial Hopf algebra
theory, representation theory, and theoretical physics. Let us cite for
instance the algebra of symmetric functions~\cite{Mac95} involving
integer partitions, the algebra of noncommutative symmetric
functions~\cite{GKLLRT94} involving integer compositions, the
Malvenuto-Reutenauer algebra of free quasi-symmetric functions~\cite{MR95}
(see also~\cite{DHT02}) involving permutations, the Loday-Ronco Hopf
algebra of binary trees~\cite{LR98} (see also~\cite{HNT05}), and the
Connes-Kreimer Hopf algebra of forests of rooted trees~\cite{CK98}.
\medskip

There are several ways to understand and to gather information about such
structures. A very fruitful strategy consists in splitting their associative
products $\star$ into two separate operations $\GDendr$ and $\DDendr$ in
such a way that $\star$ turns to be the sum of $\GDendr$ and $\DDendr$.
To be more precise, if $\Vca$ is a vector space endowed with an associative
product $\star$, splitting $\star$ consists in providing two operations
$\GDendr$ and $\DDendr$ defined on $\Vca$ and such that for all elements
$x$ and $y$ of $\Vca$,
\begin{equation}
    x \star y = x \GDendr y + x \DDendr y.
\end{equation}
This splitting property is more concisely denoted by
\begin{equation}
    \star = \GDendr + \DDendr.
\end{equation}
One of the most obvious example occurs by considering the shuffle product
on words. Indeed, this product can be separated into two operations
according to the origin (first or second operand) of the last letter of
the words appearing in the result~\cite{Ree58}. Other main examples
include the split of the shifted shuffle product of permutations of the
Malvenuto-Reutenauer Hopf algebra and of the product of binary trees of
the Loday-Ronco Hopf algebra~\cite{Foi07}. The original formalization
and the germs of generalization of these notions, due to Loday~\cite{Lod01},
lead to the introduction of dendriform algebras. Dendriform algebras
are vector spaces endowed with two operations $\GDendr$ and $\DDendr$
so that $\GDendr + \DDendr$ is associative and satisfy few other relations.
Since any dendriform algebra is a quotient of a certain free dendriform
algebra, the study of free dendriform algebras is worthwhile. Besides,
the description of free dendriform algebras has a nice combinatorial
interpretation involving binary trees and shuffle of binary trees.
\medskip

In recent years, several generalizations of dendriform algebras were
introduced and studied. Among these, one can cite dendriform
trialgebras~\cite{LR04}, quadri-algebras~\cite{AL04},
ennea-algebras~\cite{Ler04}, $m$-dendriform algebras of Leroux~\cite{Ler07},
and $m$-dendriform algebras of Novelli~\cite{Nov14}, all providing new
ways to split associative products into more than two pieces. Besides,
free objects in the corresponding categories of these algebras can be
described by relatively complex combinatorial objects and more or less
tricky operations on these. For instance, free dendriform trialgebras
involve Schröder trees, free quadri-algebras involve noncrossing connected
graphs on a circle, and free $m$-dendriform algebras of Leroux and free
$m$-dendriform algebras of Novelli involves planar rooted trees where
internal nodes have a constant number of children.
\medskip

The theory of operads (see~\cite{LV12} for a complete exposition and
also~\cite{Cha08}) seems to be one of the best tools to put all these
algebraic structures under a same roof. Informally, an operad is a space
of abstract operators that can be composed. The main interest of this
theory is that any operad encodes a category of algebras and working with
an operad amounts to work with the algebras all together of this category.
Moreover, this theory gives a nice translation of connections that may
exist between {\em a priori} two very different sorts of algebras. Indeed,
any morphism between operads gives rise to a functor between the both
encoded categories. We have to point out that operads were first
introduced in the context of algebraic topology~\cite{May72,BV73} but
they are more and more present in combinatorics~\cite{Cha08}.
\medskip

The first goal of this work is to define and justify a new generalization
of dendriform algebras. Our long term primary objective is to develop
new implements to split associative products in smaller pieces. Our main
tool is the Koszul duality of operads, an important part of the theory
introduced by Ginzburg and Kapranov~\cite{GK94}. We use the approach
consisting in considering the diassociative operad $\Dias$~\cite{Lod01},
the Koszul dual of the dendriform operad $\Dendr$, rather that focusing
on $\Dendr$. For this, we rely on the definition of a generalization
$\Dias_\gamma$ on a nonnegative integer parameter $\gamma$ of the
diassociative operad introduced by the author in~\cite{GirI}. These
operads, called $\gamma$-pluriassociative operads, satisfy several
properties and are among other set-operads and Koszul operads. We
introduce in the present work the operads $\Dendr_\gamma$ as the Koszul
dual of the operads~$\Dias_\gamma$.
\medskip

The operads $\Dendr_\gamma$ are the underlying operads of the category
of $\gamma$-polydendriform algebras, that are algebras with $2\gamma$
operations $\GDendrA_a$, $\DDendrA_a$, $a \in [\gamma]$, satisfying some
relations. Free objects in these categories involve binary trees where
all edges connecting two internal nodes are labeled on $[\gamma]$ and the
computation of a product of two binary trees admits an inductive
description. Moreover, the introduction of $\gamma$-polydendriform
algebras offers to split an associative product~$\star$ by
\begin{equation}
    \star = \GDendrA_1 + \DDendrA_1 + \dots + \GDendrA_\gamma
    + \DDendrA_\gamma,
\end{equation}
with, among others, the stiffening conditions that all partial sums
\begin{equation}
    \GDendrA_1 + \DDendrA_1 + \dots + \GDendrA_a + \DDendrA_a
\end{equation}
are associative for all $a \in \{1, \dots, \gamma\}$. Moreovoer, this
work naturally leads to the consideration and the definition of numerous
new operads. Table~\ref{tab:operades_introduites_II} summarizes some
information about these.
\begin{table}
    \centering
    \begin{tabular}{c|c|c|c}
        Operad & Objects & Dimensions & Symm. \\ \hline \hline
        $\Dendr_\gamma$ & $\gamma$-edge valued binary trees
            & $\gamma^{n - 1} \frac{1}{n + 1} \binom{2n}{n}$ & No \\ \hline
        $\As_\gamma$ & $\gamma$-corollas & $\gamma$ & No \\ \hline
        $\DAs_\gamma$ & $\gamma$-alternating Schröder trees
            & $\sum\limits_{k = 0}^{n - 2} \gamma^{k + 1}
            (\gamma - 1)^{n - k - 2} \frac{1}{k + 1} \binom{n - 2}{k}
            \binom{n - 1}{k}$ & No \\ \hline
        $\Dup_\gamma$ & $\gamma$-edge valued binary trees
            & $\gamma^{n - 1} \frac{1}{n + 1} \binom{2n}{n}$ & No \\ \hline
        $\TDendr_\gamma$ & $\gamma$-edge valued Schröder trees
            & $\sum\limits_{k = 0}^{n - 1} (\gamma + 1)^k \gamma^{n - k - 1}
            \frac{1}{k + 1} \binom{n - 1}{k} \binom{n}{k}$ & No \\ \hline
        $\Com_\gamma$ & --- & --- & Yes \\ \hline
        $\Zin_\gamma$ & --- & --- & Yes
    \end{tabular}
    \bigskip
    \caption{The main operads defined in this paper. All these operads
    depend on a nonnegative integer parameter $\gamma$. The shown
    dimensions are the ones of the homogeneous components of
    arities $n \geq 2$ of the operads.}
    \label{tab:operades_introduites_II}
\end{table}
\medskip

This article is organized as follows.
Section~\ref{sec:preliminaires_Koszul_Dendr} contains the definition
of the Koszul duality for operads and gives some recalls about
the dendriform operad and dendriform algebras.
\medskip

Then, the operad $\Dendr_\gamma$ is introduced in
Section~\ref{sec:dendr_gamma} as the Koszul dual of $\Dias_\gamma$
(Theorem~\ref{thm:presentation_dendr_gamma}). Since $\Dias_\gamma$ is a
Koszul operad~\cite{GirI}, $\Dendr_\gamma$ also is, and then, by using
results of Ginzburg and Kapranov~\cite{GK94}, the alternating versions of
the Hilbert series of $\Dias_\gamma$ and $\Dendr_\gamma$ are the inverses
for each other for series composition. This, toghether with the expression
for the Hilbert series of $\Dias_\gamma$ established in~\cite{GirI},
leads to an expression for the Hilbert series of $\Dendr_\gamma$
(Proposition~\ref{prop:serie_hilbert_dendr_gamma}). Motivated by the
knowledge of the dimensions of $\Dendr_\gamma$, we consider binary trees
where internal edges are labelled on $\{1, \dots, \gamma\}$, called
$\gamma$-edge valued binary trees. These trees form a generalization of
the common binary trees indexing the bases of $\Dendr$, and index the
bases of $\Dendr_\gamma$. We continue the study of this operad by
providing a new presentation obtained by considering the Koszul dual of
$\Dias_\gamma$ over its $\Ksf$-basis, introduced in~\cite{GirI}
(Theorem~\ref{thm:autre_presentation_dendr_gamma}).
This presentation of $\Dendr_\gamma$ is very compact since its space of
relations can be expressed only by three sorts of relations
(\eqref{equ:relation_dendr_gamma_1_concise},
\eqref{equ:relation_dendr_gamma_2_concise},
and~\eqref{equ:relation_dendr_gamma_3_concise}), each one involving two
or three terms. We also describe all the associative elements of
$\Dendr_\gamma$ over its two bases
(Propositions~\ref{prop:operateur_associatif_dendr_gamma_autre},
\ref{prop:operateur_associatif_dendr_gamma},
and~\ref{prop:description_operateurs_associatifs_dendr_gamma}). We end
this section by constructing the free $\gamma$-polydendriform algebra
over one generator (Theorem~\ref{thm:algebre_dendr_gamma_libre}). Its
underlying vector space is the vector space of the $\gamma$-edge valued
binary trees and is endowed with $2 \gamma$ products described by
induction. These products are kinds of shuffle of trees, generalizing the
shuffle of trees introduced by Loday~\cite{Lod01} intervening in the
construction of free dendriform algebras.
\medskip

Section~\ref{sec:as_gamma} extends a part of the operadic
butterfly~\cite{Lod01,Lod06}, a diagram of operads gathering the most
classical ones together, including the diassociative, dendriform, and
associative operads. To extends this diagram into our context, we
introduce a generalization $\As_\gamma$ on a nonnegative integer
parameter $\gamma$ of the associative operad $\As$. This operad, called
$\gamma$-multiassociative operad, has $\gamma$ associative generating
operations, subjected to precise relations. We prove that this operad
can be seen as a vector space of corollas labeled on $\{1, \dots, \gamma\}$
and that
is Koszul (Proposition~\ref{prop:realisation_koszulite_as_gamma}).
Unlike the associative operad which is self-dual for Koszul duality,
$\As_\gamma$ is not when $\gamma \geq 2$. The Koszul dual of $\As_\gamma$,
denoted by $\DAs_\gamma$, is described by its presentation
(Proposition~\ref{prop:presentation_as_gamma_duale}) and is realized by
means of $\gamma$-alternating Schröder trees, that are Schröder trees
where internal nodes are labeled on $\{1, \dots, \gamma\}$ with an alternating
condition (Proposition~\ref{prop:realisation_das_gamma}). In passing, we
provide an alternative and simpler basis for the space of relations of
$\DAs_\gamma$ than the one obtained directly by considering the Koszul
dual of $\As_\gamma$ (Proposition~\ref{prop:autre_presentation_das_gamma}).
We end this section by establishing a new version of the diagram gathering
the diassociative, dendriform, and associative operads for the operads
$\Dias_\gamma$, $\As_\gamma$, $\DAs_\gamma$, and $\Dendr_\gamma$
(Theorem~\ref{thm:diagramme_dias_as_das_dendr_gamma}) by defining
appropriate morphisms between these.
\medskip

Finally, in Section~\ref{sec:generalisation_supplementaires}, we sustain
our previous ideas to propose generalizations on a nonnegative integer
parameter $\gamma$ of some more operads. We start by proposing a new operad
$\Dup_\gamma$ generalizing the duplicial operad~\cite{Lod08}, called
$\gamma$-multiplicial operad. We prove that $\Dup_\gamma$ is Koszul and,
like the bases of $\Dendr_\gamma$, that the bases of $\Dup_\gamma$ are
indexed by $\gamma$-edge valued binary trees
(Proposition~\ref{prop:proprietes_dup_gamma}). The operads $\Dendr_\gamma$
and $\Dup_\gamma$ are nevertheless not isomorphic because there are
$2\gamma$ associative elements in $\Dup_\gamma$
(Proposition~\ref{prop:description_operateurs_associatifs_dup_gamma})
against only $\gamma$ in $\Dendr_\gamma$. Then, the free
$\gamma$-multiplicial algebra over one generator is constructed
(Theorem~\ref{thm:algebre_dup_gamma_libre}). Its underlying vector space
is the vector space of the $\gamma$-edge valued binary trees and is
endowed with $2 \gamma$ products, similar to the over and under products
on binary trees of Loday and Ronco~\cite{LR02}. Next, by using almost
the same tools as the ones used in Section~\ref{sec:dendr_gamma}, we
propose a generalization $\TDendr_\gamma$ of the tridendriform operad
$\TDendr$~\cite{LR04}, called $\gamma$-polytridendriform operad. The
operad $\TDendr_\gamma$ is defined as the Koszul dual of the
$\gamma$-pluritridendriform operad
$\Trias_\gamma$, introduced by the author in~\cite{GirI}. We obtain
a presentation of $\TDendr_\gamma$
(Theorem~\ref{thm:presentation_tdendr_gamma}) and an expression for
its Hilbert series
(Proposition~\ref{prop:serie_hilbert_tdendr_gamma}). The dimensions
of $\TDendr_\gamma$ thus obtained lead to establish the fact that the
bases of $\TDendr_\gamma$ are indexed by $\gamma$-edge valued Schröder
trees, that are Schröder trees where internal edges are labelled on
$\{1, \dots, \gamma\}$. We end this work by providing generalizations on a
nonnegative integer parameter $\gamma$ integer generalization of all the
operads intervening in the operadic butterfly. We then define the
operads $\Com_\gamma$, $\Lie_\gamma$, $\Zin_\gamma$, and $\Leib_\gamma$,
that are respective generalizations of the commutative operad, the Lie
operad, the Zinbiel operad~\cite{Lod95} and the Leibniz
operad~\cite{Lod93}. We provide analogous versions for our context of
the arrows between the commutative operad and the Zinbiel operad
(Proposition~\ref{prop:morphism_com_zin_gamma}), and between the
dendriform operad and the Zinbiel operad
(Proposition~\ref{prop:morphism_dendr_zin_gamma}).
\bigskip

{\it Acknowledgements.} The author would like to thank, for interesting
discussions, Jean-Christophe Novelli about Koszul duality for operads
and Vincent Vong about strategies for constructing free objects in the
categories encoded by operads. The author thanks also Matthieu
Josuat-Vergès and Jean-Yves-Thibon for their pertinent remarks and
questions about this work when it was in progress. Finally, the author
warmly thanks the referee for his very careful reading and his
suggestions, improving the quality of the paper.
\bigskip

{\it Notations and general conventions.}
All the algebraic structures of this article have a field of characteristic
zero $\K$ as ground field. For any integers $a$ and $c$, $[a, c]$ denotes
the set $\{b \in \EnsNat : a \leq b \leq c\}$ and $[n]$, the set $[1, n]$.
We use in all this paper the notations introduced in
Section~1 of~\cite{GirI}.
\medskip

\section{Preliminaries: Koszul duality and the dendriform operad}%
\label{sec:preliminaires_Koszul_Dendr}
In the present preliminary section, we will recall the notion of Koszul
duality and several properties of the dendriform operad, the Koszul dual
of the diassociative operad (see Section~1.3 of~\cite{GirI}).
\medskip

\subsection{Koszul duality}%
\label{subsec:dual_de_Koszul}
In~\cite{GK94}, Ginzburg and Kapranov extended the notion of Koszul
duality of quadratic associative algebras to quadratic operads. Starting
with a binary and quadratic operad $\Oca$ admitting a presentation
$(\GenLibre, \RelLibre)$, the {\em Koszul dual} of $\Oca$ is the operad
$\Oca^!$, isomorphic to the operad admitting the presentation
$\left(\GenLibre, \RelLibre^\perp\right)$ where $\RelLibre^\perp$ is the
annihilator of $\RelLibre$ in $\OpLibre(\GenLibre)$ with respect to the
scalar product
\begin{equation}
    \langle -, - \rangle :
    \OpLibre(\GenLibre)(3) \otimes \OpLibre(\GenLibre)(3) \to \K
\end{equation}
linearly defined, for all $x, x', y, y' \in \GenLibre(2)$, by
\begin{equation}
    \left\langle x \circ_i y, x' \circ_{i'} y' \right\rangle :=
    \begin{cases}
        1 & \mbox{if }
            x = x', y = y', \mbox{ and } i = i' = 1, \\
        -1 & \mbox{if }
            x = x', y = y', \mbox{ and } i = i' = 2, \\
        0 & \mbox{otherwise}.
    \end{cases}
\end{equation}
Then, knowing a presentation of $\Oca$, one can compute a presentation
of~$\Oca^!$.
\medskip

Furthermore, when $\Oca$ and $\Oca^!$ are two operads Koszul dual one of
the other, and moreover, when they are Koszul operads and admit Hilbert
series, their Hilbert series satisfy~\cite{GK94}
\begin{equation} \label{equ:relation_series_hilbert_operade_duale}
    \Hca_\Oca\left(-\Hca_{\Oca^!}(-t)\right) = t.
\end{equation}
We shall make use of~\eqref{equ:relation_series_hilbert_operade_duale}
to compute the dimensions of Koszul operads defined as Koszul duals of
known ones.
\medskip

\subsection{Dendriform operad}%
\label{subsec:dendr}
We recall here the definitions and some properties of the dendriform
operad.
\medskip

The {\em dendriform operad} $\Dendr$ was introduced by
Loday~\cite{Lod01}. It is the operad admitting the presentation
$\left(\GenLibre_{\Dendr}, \RelLibre_{\Dendr}\right)$ where
$\GenLibre_{\Dendr} := \GenLibre_{\Dendr}(2) := \{\GDendr, \DDendr\}$
and $\RelLibre_{\Dendr}$ is the vector space generated by
\begin{subequations}
\begin{equation} \label{equ:relation_dendr_1}
    \GDendr \circ_1 \DDendr - \DDendr \circ_2 \GDendr,
\end{equation}
\begin{equation} \label{equ:relation_dendr_2}
    \GDendr \circ_1 \GDendr -
    \GDendr \circ_2 \GDendr -
    \GDendr \circ_2 \DDendr,
\end{equation}
\begin{equation} \label{equ:relation_dendr_3}
    \DDendr \circ_1 \GDendr +
    \DDendr \circ_1 \DDendr -
    \DDendr \circ_2 \DDendr.
\end{equation}
\end{subequations}
Note that $\Dendr$ is a binary and quadratic operad.
\medskip

This operad admits a quite complicated realization~\cite{Lod01}. For all
$n \geq 1$, the $\Dendr(n)$ are vector spaces of binary trees with $n$
internal nodes. The partial composition of two binary trees can be
described by means of intervals of the Tamari order~\cite{HT72}, a
partial order relation involving binary trees. This realization shows
that $\dim \Dendr(n) = \Cat(n)$ where
\begin{equation}
    \Cat(n) := \frac{1}{n + 1} \binom{2n}{n}
\end{equation}
is the $n$th {\em Catalan number}, counting the binary trees with
respect to their number of internal nodes. Therefore, the Hilbert series
of $\Dendr$ satisfies
\begin{equation}
    \Hca_\Dendr(t) = \frac{1 - \sqrt{1 - 4t} - 2t}{2t}.
\end{equation}
\medskip

Throughout this article, we shall graphically represent binary trees in
a slightly different manner than syntax trees. We represent the leaves
of binary trees by squares $\Feuille$, internal nodes by circles
\tikz{\node[Noeud]{};}, and edges by thick segments
\tikz{\draw[Arete](0,0)--(0,.27);}.
\medskip

From the presentation of $\Dendr$, we deduce that any $\Dendr$-algebra,
also called {\em dendriform algebra}, is a vector space $\Alg_\Dendr$
endowed with linear operations $\GDendr$ and $\DDendr$ satisfying the
relations encoded by~\eqref{equ:relation_dendr_1}---%
\eqref{equ:relation_dendr_3}. Classical examples of dendriform algebras
include Rota-Baxter algebras~\cite{Agu00} and shuffle algebras~\cite{Lod01}.
\medskip

The operation obtained by summing $\GDendr$ and $\DDendr$ is associative.
Therefore, we can see a dendriform algebra as an associative algebra in
which its associative product has been split into two parts satisfying
Relations~\eqref{equ:relation_dendr_1}, \eqref{equ:relation_dendr_2},
and~\eqref{equ:relation_dendr_3}. More precisely, we say that an
associative algebra $\Alg$ admits a {\em dendriform structure} if there
exist two nonzero binary operations $\GDendr$ and $\DDendr$ such that
the associative operation $\MAs$ of $\Alg$ satisfies
$\MAs = \GDendr + \DDendr$, and $\Alg$ endowed with the operations
$\GDendr$ and $\DDendr$, is a dendriform algebra
\medskip

The free dendriform algebra $\AlgLibre_\Dendr$ over one generator is the
vector space $\Dendr$ of binary trees with at least one internal node
endowed with the linear operations
\begin{equation}
    \GDendr, \DDendr :
    \AlgLibre_\Dendr \otimes \AlgLibre_\Dendr \to \AlgLibre_\Dendr,
\end{equation}
defined recursively, for any binary tree $\Sfr$ with at least one
internal node, and binary trees $\Tfr_1$ and $\Tfr_2$ by
\begin{equation}
    \Sfr \GDendr \Feuille
    := \Sfr =:
    \Feuille \DDendr \Sfr,
\end{equation}
\begin{equation}
    \Feuille \GDendr \Sfr := 0 =: \Sfr \DDendr \Feuille,
\end{equation}
\begin{equation}
    \ArbreBin{\Tfr_1}{\Tfr_2} \GDendr \Sfr :=
    \ArbreBin{\Tfr_1}{\Tfr_2 \GDendr \Sfr}
    + \ArbreBin{\Tfr_1}{\Tfr_2 \DDendr \Sfr}\,,
\end{equation}
\begin{equation}
    \begin{split} \Sfr \DDendr \end{split}
    \ArbreBin{\Tfr_1}{\Tfr_2} :=
    \ArbreBin{\Sfr \DDendr \Tfr_1}{\Tfr_2}
    + \ArbreBin{\Sfr \GDendr \Tfr_1}{\Tfr_2}.
\end{equation}
Note that neither $\Feuille \GDendr \Feuille$ nor
$\Feuille \DDendr \Feuille$ are defined.
\medskip

We have for instance,
\begin{equation}
    \begin{split}
    \begin{tikzpicture}[xscale=.2,yscale=.16]
        \node[Feuille](0)at(0.00,-4.50){};
        \node[Feuille](2)at(2.00,-4.50){};
        \node[Feuille](4)at(4.00,-6.75){};
        \node[Feuille](6)at(6.00,-6.75){};
        \node[Feuille](8)at(8.00,-4.50){};
        \node[Noeud](1)at(1.00,-2.25){};
        \node[Noeud](3)at(3.00,0.00){};
        \node[Noeud](5)at(5.00,-4.50){};
        \node[Noeud](7)at(7.00,-2.25){};
        \draw[Arete](0)--(1);
        \draw[Arete](1)--(3);
        \draw[Arete](2)--(1);
        \draw[Arete](4)--(5);
        \draw[Arete](5)--(7);
        \draw[Arete](6)--(5);
        \draw[Arete](7)--(3);
        \draw[Arete](8)--(7);
        \node(r)at(3.00,2.25){};
        \draw[Arete](r)--(3);
    \end{tikzpicture}
    \end{split}
    \GDendr
    \begin{split}
    \begin{tikzpicture}[xscale=.2,yscale=.2]
        \node[Feuille](0)at(0.00,-3.50){};
        \node[Feuille](2)at(2.00,-5.25){};
        \node[Feuille](4)at(4.00,-5.25){};
        \node[Feuille](6)at(6.00,-1.75){};
        \node[Noeud](1)at(1.00,-1.75){};
        \node[Noeud](3)at(3.00,-3.50){};
        \node[Noeud](5)at(5.00,0.00){};
        \draw[Arete](0)--(1);
        \draw[Arete](1)--(5);
        \draw[Arete](2)--(3);
        \draw[Arete](3)--(1);
        \draw[Arete](4)--(3);
        \draw[Arete](6)--(5);
        \node(r)at(5.00,1.75){};
        \draw[Arete](r)--(5);
    \end{tikzpicture}
    \end{split}
    =
    \begin{split}
    \begin{tikzpicture}[xscale=.18,yscale=.13]
        \node[Feuille](0)at(0.00,-5.00){};
        \node[Feuille](10)at(10.00,-12.50){};
        \node[Feuille](12)at(12.00,-12.50){};
        \node[Feuille](14)at(14.00,-7.50){};
        \node[Feuille](2)at(2.00,-5.00){};
        \node[Feuille](4)at(4.00,-7.50){};
        \node[Feuille](6)at(6.00,-7.50){};
        \node[Feuille](8)at(8.00,-10.00){};
        \node[Noeud](1)at(1.00,-2.50){};
        \node[Noeud](11)at(11.00,-10.00){};
        \node[Noeud](13)at(13.00,-5.00){};
        \node[Noeud](3)at(3.00,0.00){};
        \node[Noeud](5)at(5.00,-5.00){};
        \node[Noeud](7)at(7.00,-2.50){};
        \node[Noeud](9)at(9.00,-7.50){};
        \draw[Arete](0)--(1);
        \draw[Arete](1)--(3);
        \draw[Arete](10)--(11);
        \draw[Arete](11)--(9);
        \draw[Arete](12)--(11);
        \draw[Arete](13)--(7);
        \draw[Arete](14)--(13);
        \draw[Arete](2)--(1);
        \draw[Arete](4)--(5);
        \draw[Arete](5)--(7);
        \draw[Arete](6)--(5);
        \draw[Arete](7)--(3);
        \draw[Arete](8)--(9);
        \draw[Arete](9)--(13);
        \node(r)at(3.00,2.50){};
        \draw[Arete](r)--(3);
    \end{tikzpicture}
    \end{split}
    +
    \begin{split}
    \begin{tikzpicture}[xscale=.18,yscale=.13]
        \node[Feuille](0)at(0.00,-5.00){};
        \node[Feuille](10)at(10.00,-12.50){};
        \node[Feuille](12)at(12.00,-12.50){};
        \node[Feuille](14)at(14.00,-5.00){};
        \node[Feuille](2)at(2.00,-5.00){};
        \node[Feuille](4)at(4.00,-10.00){};
        \node[Feuille](6)at(6.00,-10.00){};
        \node[Feuille](8)at(8.00,-10.00){};
        \node[Noeud](1)at(1.00,-2.50){};
        \node[Noeud](11)at(11.00,-10.00){};
        \node[Noeud](13)at(13.00,-2.50){};
        \node[Noeud](3)at(3.00,0.00){};
        \node[Noeud](5)at(5.00,-7.50){};
        \node[Noeud](7)at(7.00,-5.00){};
        \node[Noeud](9)at(9.00,-7.50){};
        \draw[Arete](0)--(1);
        \draw[Arete](1)--(3);
        \draw[Arete](10)--(11);
        \draw[Arete](11)--(9);
        \draw[Arete](12)--(11);
        \draw[Arete](13)--(3);
        \draw[Arete](14)--(13);
        \draw[Arete](2)--(1);
        \draw[Arete](4)--(5);
        \draw[Arete](5)--(7);
        \draw[Arete](6)--(5);
        \draw[Arete](7)--(13);
        \draw[Arete](8)--(9);
        \draw[Arete](9)--(7);
        \node(r)at(3.00,2.50){};
        \draw[Arete](r)--(3);
    \end{tikzpicture}
    \end{split}
    +
    \begin{split}
    \begin{tikzpicture}[xscale=.18,yscale=.13]
        \node[Feuille](0)at(0.00,-5.00){};
        \node[Feuille](10)at(10.00,-10.00){};
        \node[Feuille](12)at(12.00,-10.00){};
        \node[Feuille](14)at(14.00,-5.00){};
        \node[Feuille](2)at(2.00,-5.00){};
        \node[Feuille](4)at(4.00,-12.50){};
        \node[Feuille](6)at(6.00,-12.50){};
        \node[Feuille](8)at(8.00,-10.00){};
        \node[Noeud](1)at(1.00,-2.50){};
        \node[Noeud](11)at(11.00,-7.50){};
        \node[Noeud](13)at(13.00,-2.50){};
        \node[Noeud](3)at(3.00,0.00){};
        \node[Noeud](5)at(5.00,-10.00){};
        \node[Noeud](7)at(7.00,-7.50){};
        \node[Noeud](9)at(9.00,-5.00){};
        \draw[Arete](0)--(1);
        \draw[Arete](1)--(3);
        \draw[Arete](10)--(11);
        \draw[Arete](11)--(9);
        \draw[Arete](12)--(11);
        \draw[Arete](13)--(3);
        \draw[Arete](14)--(13);
        \draw[Arete](2)--(1);
        \draw[Arete](4)--(5);
        \draw[Arete](5)--(7);
        \draw[Arete](6)--(5);
        \draw[Arete](7)--(9);
        \draw[Arete](8)--(7);
        \draw[Arete](9)--(13);
        \node(r)at(3.00,2.50){};
        \draw[Arete](r)--(3);
    \end{tikzpicture}
    \end{split}\,,
\end{equation}
and
\begin{equation}
    \begin{split}
    \begin{tikzpicture}[xscale=.2,yscale=.16]
        \node[Feuille](0)at(0.00,-4.50){};
        \node[Feuille](2)at(2.00,-4.50){};
        \node[Feuille](4)at(4.00,-6.75){};
        \node[Feuille](6)at(6.00,-6.75){};
        \node[Feuille](8)at(8.00,-4.50){};
        \node[Noeud](1)at(1.00,-2.25){};
        \node[Noeud](3)at(3.00,0.00){};
        \node[Noeud](5)at(5.00,-4.50){};
        \node[Noeud](7)at(7.00,-2.25){};
        \draw[Arete](0)--(1);
        \draw[Arete](1)--(3);
        \draw[Arete](2)--(1);
        \draw[Arete](4)--(5);
        \draw[Arete](5)--(7);
        \draw[Arete](6)--(5);
        \draw[Arete](7)--(3);
        \draw[Arete](8)--(7);
        \node(r)at(3.00,2.25){};
        \draw[Arete](r)--(3);
    \end{tikzpicture}
    \end{split}
    \DDendr
    \begin{split}
    \begin{tikzpicture}[xscale=.2,yscale=.2]
        \node[Feuille](0)at(0.00,-3.50){};
        \node[Feuille](2)at(2.00,-5.25){};
        \node[Feuille](4)at(4.00,-5.25){};
        \node[Feuille](6)at(6.00,-1.75){};
        \node[Noeud](1)at(1.00,-1.75){};
        \node[Noeud](3)at(3.00,-3.50){};
        \node[Noeud](5)at(5.00,0.00){};
        \draw[Arete](0)--(1);
        \draw[Arete](1)--(5);
        \draw[Arete](2)--(3);
        \draw[Arete](3)--(1);
        \draw[Arete](4)--(3);
        \draw[Arete](6)--(5);
        \node(r)at(5.00,1.75){};
        \draw[Arete](r)--(5);
    \end{tikzpicture}
    \end{split}
    =
    \begin{split}
    \begin{tikzpicture}[xscale=.18,yscale=.13]
        \node[Feuille](0)at(0.00,-7.50){};
        \node[Feuille](10)at(10.00,-12.50){};
        \node[Feuille](12)at(12.00,-12.50){};
        \node[Feuille](14)at(14.00,-2.50){};
        \node[Feuille](2)at(2.00,-7.50){};
        \node[Feuille](4)at(4.00,-10.00){};
        \node[Feuille](6)at(6.00,-10.00){};
        \node[Feuille](8)at(8.00,-10.00){};
        \node[Noeud](1)at(1.00,-5.00){};
        \node[Noeud](11)at(11.00,-10.00){};
        \node[Noeud](13)at(13.00,0.00){};
        \node[Noeud](3)at(3.00,-2.50){};
        \node[Noeud](5)at(5.00,-7.50){};
        \node[Noeud](7)at(7.00,-5.00){};
        \node[Noeud](9)at(9.00,-7.50){};
        \draw[Arete](0)--(1);
        \draw[Arete](1)--(3);
        \draw[Arete](10)--(11);
        \draw[Arete](11)--(9);
        \draw[Arete](12)--(11);
        \draw[Arete](14)--(13);
        \draw[Arete](2)--(1);
        \draw[Arete](3)--(13);
        \draw[Arete](4)--(5);
        \draw[Arete](5)--(7);
        \draw[Arete](6)--(5);
        \draw[Arete](7)--(3);
        \draw[Arete](8)--(9);
        \draw[Arete](9)--(7);
        \node(r)at(13.00,2.50){};
        \draw[Arete](r)--(13);
        \end{tikzpicture}
    \end{split}
    +
    \begin{split}
    \begin{tikzpicture}[xscale=.18,yscale=.13]
        \node[Feuille](0)at(0.00,-7.50){};
        \node[Feuille](10)at(10.00,-10.00){};
        \node[Feuille](12)at(12.00,-10.00){};
        \node[Feuille](14)at(14.00,-2.50){};
        \node[Feuille](2)at(2.00,-7.50){};
        \node[Feuille](4)at(4.00,-12.50){};
        \node[Feuille](6)at(6.00,-12.50){};
        \node[Feuille](8)at(8.00,-10.00){};
        \node[Noeud](1)at(1.00,-5.00){};
        \node[Noeud](11)at(11.00,-7.50){};
        \node[Noeud](13)at(13.00,0.00){};
        \node[Noeud](3)at(3.00,-2.50){};
        \node[Noeud](5)at(5.00,-10.00){};
        \node[Noeud](7)at(7.00,-7.50){};
        \node[Noeud](9)at(9.00,-5.00){};
        \draw[Arete](0)--(1);
        \draw[Arete](1)--(3);
        \draw[Arete](10)--(11);
        \draw[Arete](11)--(9);
        \draw[Arete](12)--(11);
        \draw[Arete](14)--(13);
        \draw[Arete](2)--(1);
        \draw[Arete](3)--(13);
        \draw[Arete](4)--(5);
        \draw[Arete](5)--(7);
        \draw[Arete](6)--(5);
        \draw[Arete](7)--(9);
        \draw[Arete](8)--(7);
        \draw[Arete](9)--(3);
        \node(r)at(13.00,2.50){};
        \draw[Arete](r)--(13);
        \end{tikzpicture}
    \end{split}
    +
    \begin{split}
    \begin{tikzpicture}[xscale=.18,yscale=.13]
        \node[Feuille](0)at(0.00,-10.00){};
        \node[Feuille](10)at(10.00,-7.50){};
        \node[Feuille](12)at(12.00,-7.50){};
        \node[Feuille](14)at(14.00,-2.50){};
        \node[Feuille](2)at(2.00,-10.00){};
        \node[Feuille](4)at(4.00,-12.50){};
        \node[Feuille](6)at(6.00,-12.50){};
        \node[Feuille](8)at(8.00,-10.00){};
        \node[Noeud](1)at(1.00,-7.50){};
        \node[Noeud](11)at(11.00,-5.00){};
        \node[Noeud](13)at(13.00,0.00){};
        \node[Noeud](3)at(3.00,-5.00){};
        \node[Noeud](5)at(5.00,-10.00){};
        \node[Noeud](7)at(7.00,-7.50){};
        \node[Noeud](9)at(9.00,-2.50){};
        \draw[Arete](0)--(1);
        \draw[Arete](1)--(3);
        \draw[Arete](10)--(11);
        \draw[Arete](11)--(9);
        \draw[Arete](12)--(11);
        \draw[Arete](14)--(13);
        \draw[Arete](2)--(1);
        \draw[Arete](3)--(9);
        \draw[Arete](4)--(5);
        \draw[Arete](5)--(7);
        \draw[Arete](6)--(5);
        \draw[Arete](7)--(3);
        \draw[Arete](8)--(7);
        \draw[Arete](9)--(13);
        \node(r)at(13.00,2.50){};
        \draw[Arete](r)--(13);
        \end{tikzpicture}
    \end{split}\,.
\end{equation}
\medskip

As shown in~\cite{Lod01}, the dendriform operad is the Koszul dual of
the diassociative operad. This can be checked by a simple computation
following what is explained in Section~\ref{subsec:dual_de_Koszul}.
Besides that, since theses two operads are Koszul operads, the alternating
versions of their Hilbert series are the inverses for each other for
series composition.
\medskip

We invite the reader to take a look
at~\cite{LR98,Agu00,Lod02,Foi07,EMP08,EM09,LV12} for a supplementary
review of properties of dendriform algebras and of the dendriform operad.
\medskip

\section{Polydendriform operads} \label{sec:dendr_gamma}
We introduce at this point our generalization on a nonnegative integer
parameter $\gamma$ of the dendriform operad and dendriform algebras. We
first construct this operad, compute its dimensions, and give then two
presentations by generators and relations. This section ends by a
description of free algebras over one generator in the category encoded
by our generalization.
\medskip

\subsection{Construction and properties}%
\label{subsec:construcion_dendr_gamma}
Theorem 2.2.6 of~\cite{GirI}, by exhibiting
a presentation of $\Dias_\gamma$, shows that this operad is binary and
quadratic. It then admits a Koszul dual, denoted by $\Dendr_\gamma$ and
called {\em $\gamma$-polydendriform operad}.
\medskip

\subsubsection{Definition and presentation}%
\label{subsubsec:presentation_dendr_gamma_alternative}
A description of $\Dendr_\gamma$ is provided by the following presentation
by generators and relations.
\medskip

\begin{Theoreme} \label{thm:presentation_dendr_gamma}
    For any integer $\gamma \geq 0$, the operad $\Dendr_\gamma$
    admits the following presentation. It is generated by
    $\GenDendr := \GenDendr(2):=
    \{\GDendrA_a, \DDendrA_a : a \in [\gamma]\}$ and its space of
    relations $\RelDendr$ is generated by
    \begin{subequations}
    \begin{equation} \label{equ:relation_dendr_gamma_1_alternative}
        \GDendrA_a \circ_1 \DDendrA_{a'} - \DDendrA_{a'} \circ_2 \GDendrA_a,
        \qquad a, a' \in [\gamma],
    \end{equation}
    \begin{equation} \label{equ:relation_dendr_gamma_2_alternative}
        \GDendrA_a \circ_1 \GDendrA_b - \GDendrA_a \circ_2 \DDendrA_b,
        \qquad a < b \in [\gamma],
    \end{equation}
    \begin{equation} \label{equ:relation_dendr_gamma_3_alternative}
        \DDendrA_a \circ_1 \GDendrA_b - \DDendrA_a \circ_2 \DDendrA_b,
        \qquad a < b \in [\gamma],
    \end{equation}
    \begin{equation} \label{equ:relation_dendr_gamma_4_alternative}
        \GDendrA_a \circ_1 \GDendrA_b - \GDendrA_a \circ_2 \GDendrA_b,
        \qquad a < b \in [\gamma],
    \end{equation}
    \begin{equation} \label{equ:relation_dendr_gamma_5_alternative}
        \DDendrA_a \circ_1 \DDendrA_b - \DDendrA_a \circ_2 \DDendrA_b,
        \qquad a < b \in [\gamma],
    \end{equation}
    \begin{equation} \label{equ:relation_dendr_gamma_6_alternative}
        \GDendrA_d \circ_1 \GDendrA_d -
        \left(\sum_{c \in [d]} \GDendrA_d \circ_2 \GDendrA_c
                + \GDendrA_d \circ_2 \DDendrA_c\right),
        \qquad d \in [\gamma],
    \end{equation}
    \begin{equation} \label{equ:relation_dendr_gamma_7_alternative}
        \left(\sum_{c \in [d]} \DDendrA_d \circ_1 \DDendrA_c
            + \DDendrA_d \circ_1 \GDendrA_c\right)
            - \DDendrA_d \circ_2 \DDendrA_d,
        \qquad d \in [\gamma].
    \end{equation}
    \end{subequations}
\end{Theoreme}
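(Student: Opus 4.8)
The plan is to use the definition $\Dendr_\gamma = \Dias_\gamma^!$ directly, together with the presentation of $\Dias_\gamma$ furnished by Theorem~2.2.6 of~\cite{GirI}. Since that result exhibits $\Dias_\gamma$ as binary and quadratic with a presentation $(\GenDias, \RelDias)$, the Koszul duality recipe of Section~\ref{subsec:dual_de_Koszul} shows that $\Dendr_\gamma$ is presented by $(\GenDias, \RelDias^\perp)$, where the shared generators are relabelled as $\GenDendr = \{\GDendrA_a, \DDendrA_a : a \in [\gamma]\}$ and $\RelDias^\perp$ is the annihilator of $\RelDias$ in $\OpLibre(\GenDendr)(3)$ for the scalar product $\langle -, - \rangle$. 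Thus the whole statement reduces to proving that the space $\RelDendr$ generated by~\eqref{equ:relation_dendr_gamma_1_alternative}--\eqref{equ:relation_dendr_gamma_7_alternative} coincides with $\RelDias^\perp$, which I would do by establishing the inclusion $\RelDendr \subseteq \RelDias^\perp$ and then matching dimensions.

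For the inclusion, the decisive observation is that $\langle -, -\rangle$ is diagonal in the generators and in the composition slots: $\langle x \circ_i y, x' \circ_{i'} y'\rangle$ vanishes unless $x = x'$, $y = y'$ and $i = i'$, equalling $+1$ when $i=1$ and $-1$ when $i=2$. Writing a generic element of $\OpLibre(\GenDendr)(3)$ as $\sum_{x,y} \alpha_{x,y}\,(x \circ_1 y) + \sum_{x,y} \beta_{x,y}\,(x \circ_2 y)$, orthogonality to a fixed relation of $\RelDias$ amounts to one linear equation pairing its $\circ_1$-part with the $\alpha$-coefficients and its $\circ_2$-part with the $\beta$-coefficients, with the sign reversal on the second slot. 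I would then run each of the seven families against the basis of $\RelDias$ read off from~\cite{GirI}; because of the diagonal form each such pairing collapses to matching a handful of coefficients, and the indices $a,a',b,c,d$ merely record which generators take part, so the cancellations familiar from the case $\gamma = 1$ recur family by family.

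To match dimensions, note that $\dim \OpLibre(\GenDendr)(3) = 2(2\gamma)^2 = 8\gamma^2$ and, the scalar product being nondegenerate, $\dim \RelDias^\perp = 8\gamma^2 - \dim \RelDias$. Reading $\dim \RelDias = 5\gamma^2$ off the presentation of~\cite{GirI} yields $\dim \RelDias^\perp = 3\gamma^2$. On the other hand, families~\eqref{equ:relation_dendr_gamma_1_alternative}--\eqref{equ:relation_dendr_gamma_7_alternative} supply $\gamma^2 + 4\binom{\gamma}{2} + 2\gamma = 3\gamma^2$ elements, and these are linearly independent: each family carries a monomial occurring in no member of any other family---for instance $\GDendrA_a \circ_1 \DDendrA_{a'}$ for~\eqref{equ:relation_dendr_gamma_1_alternative}, the off-diagonal monomials $\GDendrA_a \circ_2 \DDendrA_b$, $\DDendrA_a \circ_1 \GDendrA_b$, $\GDendrA_a \circ_2 \GDendrA_b$, $\DDendrA_a \circ_1 \DDendrA_b$ with $a<b$ for~\eqref{equ:relation_dendr_gamma_2_alternative}--\eqref{equ:relation_dendr_gamma_5_alternative} respectively, and the diagonal $\GDendrA_d \circ_1 \GDendrA_d$ and $\DDendrA_d \circ_2 \DDendrA_d$ for~\eqref{equ:relation_dendr_gamma_6_alternative} and~\eqref{equ:relation_dendr_gamma_7_alternative}---so no nontrivial combination of them can vanish. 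With the inclusion of the preceding paragraph, the equality $\RelDendr = \RelDias^\perp$ follows.

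The principal obstacle I expect is the bookkeeping of the orthogonality step rather than any conceptual difficulty: with $2\gamma$ generators and two slots carrying opposite signs, a single misassignment of $\circ_1$ versus $\circ_2$, or of the identification between the generators of $\Dias_\gamma$ and those of its dual, would quietly spoil several of the seven families. A clean way to keep this under control is to encode the $\circ_2$ sign reversal as an isometric involution $\sigma$, so that $\langle u, v\rangle$ becomes the ordinary dot product $\langle \sigma u, v\rangle$ and $\RelDias^\perp$ is computed as the standard orthogonal complement of $\sigma(\RelDias)$, turning the verification into a transparent transposition of the $\Dias_\gamma$ relation matrix. As a sanity check, specialising to $\gamma = 1$ empties families~\eqref{equ:relation_dendr_gamma_2_alternative}--\eqref{equ:relation_dendr_gamma_5_alternative} (which require $a<b$) and reproduces exactly the three classical dendriform relations~\eqref{equ:relation_dendr_1}--\eqref{equ:relation_dendr_3}.
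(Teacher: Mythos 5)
Your proposal is correct and follows essentially the same route as the paper: the paper's proof likewise invokes the presentation of $\Dias_\gamma$ from Theorem~2.2.6 of~\cite{GirI}, identifies $\GDendrA_a$ and $\DDendrA_a$ with the generators $\GDias_a$ and $\DDias_a$, and establishes $\RelDias^\perp = \RelDendr$ by direct computation of the pairing---a computation you merely organize more explicitly as the inclusion $\RelDendr \subseteq \RelDias^\perp$ plus a dimension match. Your bookkeeping is sound: since $\dim \Dias_\gamma(3) = 3\gamma^2$ one indeed has $\dim \RelDias = 8\gamma^2 - 3\gamma^2 = 5\gamma^2$ and hence, by nondegeneracy of the pairing, $\dim \RelDias^\perp = 3\gamma^2 = \gamma^2 + 4\binom{\gamma}{2} + 2\gamma$, which matches your count of the families \eqref{equ:relation_dendr_gamma_1_alternative}--\eqref{equ:relation_dendr_gamma_7_alternative}, and your pivot-monomial independence argument (distinguishing, e.g., $\GDendrA_a \circ_2 \DDendrA_b$ with $a < b$ from the monomials $\GDendrA_d \circ_2 \DDendrA_c$ with $c \leq d$ occurring in \eqref{equ:relation_dendr_gamma_6_alternative}) is valid.
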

\begin{proof}
    By Theorem~2.2.6 of~\cite{GirI}, we know
    that $\Dias_\gamma$ is a binary and quadratic operad, and that its
    space of relations $\RelDias$ is the space induced by the equivalence
    relation $\Rel_\gamma$ defined
    by~(2.2.11a)--(2.2.11g)
    in~\cite{GirI}. Now, by a straightforward computation, and by
    identifying $\GDendrA_a$ (resp. $\DDendrA_a$) with $\GDias_a$ (resp.
    $\DDias_a$) for any $a \in [\gamma]$, we obtain that the space
    $\RelDendr$ of the statement of the theorem satisfies
    $\RelDias^\perp = \RelDendr$. Hence, $\Dendr_\gamma$ admits the
    claimed presentation.
\end{proof}
\medskip

Theorem \ref{thm:presentation_dendr_gamma} provides a quite complicated
presentation of $\Dendr_\gamma$. We shall below define a more convenient
basis for the space of relations of $\Dendr_\gamma$.
\medskip

\subsubsection{Elements and dimensions}
\begin{Proposition} \label{prop:serie_hilbert_dendr_gamma}
    For any integer $\gamma \geq 0$, the Hilbert series
    $\Hca_{\Dendr_\gamma}(t)$ of the operad $\Dendr_\gamma$ satisfies
    \begin{equation} \label{equ:serie_hilbert_dendr_gamma}
        \Hca_{\Dendr_\gamma}(t)
            = t + 2\gamma t \, \Hca_{\Dendr_\gamma}(t)
            + \gamma^2 t \, \Hca_{\Dendr_\gamma}(t)^2.
    \end{equation}
\end{Proposition}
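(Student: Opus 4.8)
The plan is to exploit the Koszul duality machinery recalled in Section~\ref{subsec:dual_de_Koszul}, since the dimensions of $\Dendr_\gamma$ are not at our disposal directly but those of its Koszul dual are. Because $\Dias_\gamma$ is a Koszul operad (established in~\cite{GirI}) and $\Dendr_\gamma$ is by construction its Koszul dual (Theorem~\ref{thm:presentation_dendr_gamma}), the operad $\Dendr_\gamma$ is Koszul as well. Both operads are binary, quadratic, and admit Hilbert series, so relation~\eqref{equ:relation_series_hilbert_operade_duale} applies and gives
\begin{equation*}
    \Hca_{\Dias_\gamma}\!\left(-\Hca_{\Dendr_\gamma}(-t)\right) = t.
\end{equation*}
This identity expresses that $\Hca_{\Dias_\gamma}$ and the series obtained from $\Hca_{\Dendr_\gamma}$ by the two sign flips are compositional inverses of one another.

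The second ingredient is the explicit Hilbert series of $\Dias_\gamma$ established in~\cite{GirI}. Since $\dim \Dias_\gamma(n) = n \gamma^{n - 1}$ for all $n \geq 1$, one has
\begin{equation*}
    \Hca_{\Dias_\gamma}(t) = \frac{t}{(1 - \gamma t)^2}.
\end{equation*}
First I would substitute this expression into the duality relation above. Setting $v := -\Hca_{\Dendr_\gamma}(-t)$, the relation becomes $\frac{v}{(1 - \gamma v)^2} = t$, which rearranges to the polynomial identity $v = t(1 - \gamma v)^2$.

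Next I would unwind the definition of $v$ and perform the sign substitution $t \mapsto -t$. Replacing $v$ by $-\Hca_{\Dendr_\gamma}(-t)$ yields $-\Hca_{\Dendr_\gamma}(-t) = t\left(1 + \gamma \Hca_{\Dendr_\gamma}(-t)\right)^2$; substituting $t \mapsto -t$ and cancelling the global sign then gives
\begin{equation*}
    \Hca_{\Dendr_\gamma}(t)
        = t\left(1 + \gamma \Hca_{\Dendr_\gamma}(t)\right)^2
        = t + 2\gamma t\, \Hca_{\Dendr_\gamma}(t)
        + \gamma^2 t\, \Hca_{\Dendr_\gamma}(t)^2,
\end{equation*}
which is exactly~\eqref{equ:serie_hilbert_dendr_gamma}.

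The argument is essentially a compositional-inverse computation, so no step is genuinely hard; the only point requiring attention is the bookkeeping of the two sign flips in~\eqref{equ:relation_series_hilbert_operade_duale}, together with the verification that the correct branch of the inverse is selected, namely the one for which $\Hca_{\Dendr_\gamma}(t) = t + O(t^2)$, consistent with $\dim \Dendr_\gamma(1) = 1$. As a sanity check, for $\gamma = 1$ the functional equation specializes to $\Hca_{\Dendr}(t) = t\left(1 + \Hca_{\Dendr}(t)\right)^2$, whose relevant solution is $\frac{1 - 2t - \sqrt{1 - 4t}}{2t}$, recovering the classical Catalan Hilbert series of $\Dendr$ recalled in Section~\ref{subsec:dendr}.
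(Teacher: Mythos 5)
Your proof is correct and follows essentially the same route as the paper: both rest on the Koszulity of $\Dias_\gamma$ established in~\cite{GirI}, the explicit series $\Hca_{\Dias_\gamma}(t) = t/(1 - \gamma t)^2$, and the Ginzburg--Kapranov relation~\eqref{equ:relation_series_hilbert_operade_duale}, the only difference being that you solve the inversion forward for $\Hca_{\Dendr_\gamma}$ while the paper verifies that the series defined by~\eqref{equ:serie_hilbert_dendr_gamma} inverts $\bar\Hca_{\Dias_\gamma}$. Your remark on branch selection is well taken but automatic here, since the functional equation has a unique formal power series solution vanishing at~$0$.
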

\begin{proof}
    By setting $\bar \Hca_{\Dendr_\gamma}(t) := \Hca_{\Dendr_\gamma}(-t)$,
    from~\eqref{equ:serie_hilbert_dendr_gamma}, we obtain
    \begin{equation}
        t = \frac{-\bar \Hca_{\Dendr_\gamma}(t)}
                 {\left(1 + \gamma \, \bar \Hca_{\Dendr_\gamma}(t)\right)^2}.
    \end{equation}
    Moreover, by setting
    $\bar \Hca_{\Dias_\gamma}(t) := \Hca_{\Dias_\gamma}(-t)$, where
    $\Hca_{\Dias_\gamma}(t)$ is the Hilbert series of $\Dias_\gamma$
    defined by~(2.1.8) in~\cite{GirI}, we
    have
    \begin{equation} \label{equ:serie_hilbert_dendr_gamma_demo}
        \bar \Hca_{\Dias_\gamma}\left(\bar \Hca_{\Dendr_\gamma}(t)\right)
            = \frac{-\bar \Hca_{\Dendr_\gamma}(t)}
                  {\left(1 + \gamma \, \bar \Hca_{\Dendr_\gamma}(t)\right)^2}
            = t,
    \end{equation}
    showing that $\bar \Hca_{\Dias_\gamma}(t)$ and
    $\bar \Hca_{\Dendr_\gamma}(t)$ are the inverses for each other for
    series composition.
    \smallskip

    Now, since  by Theorem~2.3.1 and
    Proposition~2.1.1 of~\cite{GirI},
    $\Dias_\gamma$ is a Koszul operad and its Hilbert series is
    $\Hca_{\Dias_\gamma}(t)$, and since $\Dendr_\gamma$ is by definition
    the Koszul dual of $\Dias_\gamma$, the Hilbert series of these two
    operads satisfy~\eqref{equ:relation_series_hilbert_operade_duale}.
    Therefore, \eqref{equ:serie_hilbert_dendr_gamma_demo} implies that
    the Hilbert series of $\Dendr_\gamma$ is $\Hca_{\Dendr_\gamma}(t)$.
\end{proof}
\medskip

By examining the expression for $\Hca_{\Dendr_\gamma}(t)$ of the
statement of Proposition~\ref{prop:serie_hilbert_dendr_gamma}, we
observe that for any $n \geq 1$, $\Dendr_\gamma(n)$ can be seen as the
vector space $\AlgLibre_{\Dendr_\gamma}(n)$ of binary trees with $n$
internal nodes wherein its $n - 1$ edges connecting two internal nodes
are labeled on $[\gamma]$. We call these trees {\em $\gamma$-edge valued
binary trees}. In our graphical representations of $\gamma$-edge valued
binary trees, any edge label is drawn into a hexagon located half the
edge. For instance,
\begin{equation}
    \begin{split}
    \begin{tikzpicture}[xscale=.22,yscale=.15]
        \node[Feuille](0)at(0.00,-14.00){};
        \node[Feuille](10)at(10.00,-10.50){};
        \node[Feuille](12)at(12.00,-17.50){};
        \node[Feuille](14)at(14.00,-17.50){};
        \node[Feuille](16)at(16.00,-14.00){};
        \node[Feuille](18)at(18.00,-10.50){};
        \node[Feuille](2)at(2.00,-14.00){};
        \node[Feuille](20)at(20.00,-10.50){};
        \node[Feuille](4)at(4.00,-14.00){};
        \node[Feuille](6)at(6.00,-14.00){};
        \node[Feuille](8)at(8.00,-7.00){};
        \node[Noeud](1)at(1.00,-10.50){};
        \node[Noeud](11)at(11.00,-7.00){};
        \node[Noeud](13)at(13.00,-14.00){};
        \node[Noeud](15)at(15.00,-10.50){};
        \node[Noeud](17)at(17.00,-3.50){};
        \node[Noeud](19)at(19.00,-7.00){};
        \node[Noeud](3)at(3.00,-7.00){};
        \node[Noeud](5)at(5.00,-10.50){};
        \node[Noeud](7)at(7.00,-3.50){};
        \node[Noeud](9)at(9.00,0.00){};
        \draw[Arete](0)--(1);
        \draw[Arete](1)edge[]node[EtiqArete]{\begin{math}3\end{math}}(3);
        \draw[Arete](10)--(11);
        \draw[Arete](11)edge[]node[EtiqArete]{\begin{math}3\end{math}}(17);
        \draw[Arete](12)--(13);
        \draw[Arete](13)edge[]node[EtiqArete]{\begin{math}1\end{math}}(15);
        \draw[Arete](14)--(13);
        \draw[Arete](15)edge[]node[EtiqArete]{\begin{math}3\end{math}}(11);
        \draw[Arete](16)--(15);
        \draw[Arete](17)edge[]node[EtiqArete]{\begin{math}4\end{math}}(9);
        \draw[Arete](18)--(19);
        \draw[Arete](19)edge[]node[EtiqArete]{\begin{math}1\end{math}}(17);
        \draw[Arete](2)--(1);
        \draw[Arete](20)--(19);
        \draw[Arete](3)edge[]node[EtiqArete]{\begin{math}3\end{math}}(7);
        \draw[Arete](4)--(5);
        \draw[Arete](5)edge[]node[EtiqArete]{\begin{math}4\end{math}}(3);
        \draw[Arete](6)--(5);
        \draw[Arete](7)edge[]node[EtiqArete]{\begin{math}4\end{math}}(9);
        \draw[Arete](8)--(7);
        \node(r)at(9.00,2.50){};
        \draw[Arete](r)--(9);
    \end{tikzpicture}
    \end{split}
\end{equation}
is a $4$-edge valued binary tree and a basis element of $\Dendr_4(10)$.
\medskip

We deduce from Proposition~\ref{prop:serie_hilbert_dendr_gamma} that the
Hilbert series of $\Dendr_\gamma$ satisfies
\begin{equation}
    \Hca_{\Dendr_\gamma}(t) =
    \frac{1 - \sqrt{1 - 4 \gamma t} - 2 \gamma t}{2\gamma^2 t},
\end{equation}
and we also obtain that for all $n \geq 1$,
$\dim \Dendr_\gamma(n) = \gamma^{n - 1} \Cat(n)$.
For instance, the first dimensions of $\Dendr_1$, $\Dendr_2$, $\Dendr_3$,
and $\Dendr_4$ are respectively
\begin{equation}
    1, 2, 5, 14, 42, 132, 429, 1430, 4862, 16796, 58786,
\end{equation}
\begin{equation}
    1, 4, 20, 112, 672, 4224, 27456, 183040, 1244672, 8599552, 60196864,
\end{equation}
\begin{equation}
    1, 6, 45, 378, 3402, 32076, 312741, 3127410, 31899582, 330595668, 3471254514,
\end{equation}
\begin{equation}
    1, 8, 80, 896, 10752, 135168, 1757184, 23429120, 318636032,
    4402970624, 61641588736.
\end{equation}
The first one is Sequence~\Sloane{A000108}, the second one is
Sequence~\Sloane{A003645}, and the third one is Sequence~\Sloane{A101600}
of~\cite{Slo}. Last sequence is not listed in~\cite{Slo} at this time.
\medskip

\subsubsection{Associative operations}
In the same manner as in the dendriform operad the sum of its two
operations produces an associative operation, in the $\gamma$-dendriform
operad there is a way to build associative operations, as shows next
statement.
\medskip

\begin{Proposition} \label{prop:operateur_associatif_dendr_gamma_autre}
    For any integers $\gamma \geq 0$ and $b \in [\gamma]$, the element
    \begin{equation}
        \OpAsDendrA_b :=
        \pi\left(\sum_{a \in [b]} \GDendrA_a + \DDendrA_a\right)
    \end{equation}
    of $\Dendr_\gamma$, where
    $\pi : \OpLibre\left(\GenDendr\right) \to \Dendr_\gamma$ is the
    canonical surjection map, is associative.
\end{Proposition}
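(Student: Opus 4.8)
The plan is to use the criterion that an arity-two element $\mu$ of an operad is associative if and only if $\mu \circ_1 \mu = \mu \circ_2 \mu$. Writing $\pi$ for the canonical projection and lifting $\OpAsDendrA_b$ to $\omega := \sum_{a \in [b]} (\GDendrA_a + \DDendrA_a)$ in $\OpLibre(\GenDendr)$, proving the statement reduces to checking that $\omega \circ_1 \omega - \omega \circ_2 \omega$ belongs to the space of relations $\RelDendr$; since this space is generated in arity three by the elements listed in Theorem~\ref{thm:presentation_dendr_gamma}, this is exactly the equality of $\OpAsDendrA_b \circ_1 \OpAsDendrA_b$ and $\OpAsDendrA_b \circ_2 \OpAsDendrA_b$ in $\Dendr_\gamma(3)$.

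First I would expand both partial compositions by bilinearity into sums, over all pairs $(a, a') \in [b]^2$, of the four elementary products $\GDendrA_a \circ_i \GDendrA_{a'}$, $\GDendrA_a \circ_i \DDendrA_{a'}$, $\DDendrA_a \circ_i \GDendrA_{a'}$ and $\DDendrA_a \circ_i \DDendrA_{a'}$, and then reduce both expressions to a common normal form modulo $\RelDendr$. The engine of the computation is the triple of relations generalising the three defining relations of $\Dendr$: the mixed relation~\eqref{equ:relation_dendr_gamma_1_alternative} sends each $\GDendrA_a \circ_1 \DDendrA_{a'}$ to $\DDendrA_{a'} \circ_2 \GDendrA_a$, and the two diagonal relations~\eqref{equ:relation_dendr_gamma_6_alternative} and~\eqref{equ:relation_dendr_gamma_7_alternative} rewrite $\GDendrA_d \circ_1 \GDendrA_d$ and $\DDendrA_d \circ_2 \DDendrA_d$ as the truncated sums over $c \in [d]$ displayed there, the off-diagonal relations~\eqref{equ:relation_dendr_gamma_2_alternative}--\eqref{equ:relation_dendr_gamma_5_alternative} accounting for the pairs $a \ne a'$. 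This verification is most transparent in the compact $\Ksf$-basis presentation, where the three relation families are precisely the index-graded analogues of the dendriform relations; the case $b = 1$ recovers verbatim the classical argument that $\GDendr + \DDendr$ is associative in $\Dendr$.

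The step I expect to be the main obstacle is the combinatorial bookkeeping of the index ranges. Because relations~\eqref{equ:relation_dendr_gamma_6_alternative} and~\eqref{equ:relation_dendr_gamma_7_alternative} trade a single diagonal term for a sum over $c \in [d]$, summing $d$ over $[b]$ yields nested sums $\sum_{d \in [b]} \sum_{c \in [d]}$ that must be reindexed as $\sum_{1 \le c \le d \le b}$ and then compared, coefficient by coefficient, against the off-diagonal and mixed contributions on the other side. I would organise this by fixing a pair $c \le d$ in $[b]$ and verifying that the contributions involving two $\GDendrA$'s, two $\DDendrA$'s, or one of each occur with equal coefficients in the reduced forms of $\omega \circ_1 \omega$ and $\omega \circ_2 \omega$; the truncation at $b$ is compatible with each relation invoked, so the argument runs uniformly for every $b \in [\gamma]$, and a final inspection of the extreme values $b = 1$ and $b = \gamma$ rules out any mishandled boundary term.
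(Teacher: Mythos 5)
Your proposal is correct and takes essentially the same route as the paper's proof: lift $\OpAsDendrA_b$ to $\omega := \sum_{a \in [b]} \GDendrA_a + \DDendrA_a$, expand $\omega \circ_1 \omega - \omega \circ_2 \omega$ by bilinearity over pairs $(a,a') \in [b]^2$, and recognize the result as the sum of the generators of $\RelDendr$ of Theorem~\ref{thm:presentation_dendr_gamma} with all indices restricted to $[b]$ --- the paper states this observation in a single line, while you spell out the reindexing $\sum_{d \in [b]} \sum_{c \in [d]} = \sum_{1 \leq c \leq d \leq b}$ needed to justify it. Your coefficient-by-coefficient check is exactly the right level of care here, since as printed, relations~\eqref{equ:relation_dendr_gamma_4_alternative} and~\eqref{equ:relation_dendr_gamma_5_alternative} must be read with the larger index placed in the other slot (compare the tridendriform analogues~\eqref{equ:relation_presentation_tdendr_gamma_8} and~\eqref{equ:relation_presentation_tdendr_gamma_9}) so that each ordered pair of monomials occurs exactly once in the sum.
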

\begin{proof}
    By setting
    \begin{equation}
        x := \sum_{a \in [b]} \GDendrA_a + \DDendrA_a,
    \end{equation}
    we have
    \begin{multline} \label{equ:operateur_associatif_dendr_gamma_autre_demo}
        x \circ_1 x - x \circ_2 x =
        \GDendrA_a \circ_1 \GDendrA_{a'} +
        \GDendrA_a \circ_1 \DDendrA_{a'}
            + \DDendrA_a \circ_1 \GDendrA_{a'} +
        \DDendrA_a \circ_1 \DDendrA_{a'} \\
            -
            \GDendrA_a \circ_2 \GDendrA_{a'} -
        \GDendrA_a \circ_2 \DDendrA_{a'}
            -
        \DDendrA_a \circ_2 \GDendrA_{a'} -
        \DDendrA_a \circ_2 \DDendrA_{a'}.
    \end{multline}
    We the observe that~\eqref{equ:operateur_associatif_dendr_gamma_autre_demo}
    is the sum of
    elements~\eqref{equ:relation_dendr_gamma_1_alternative}---%
    \eqref{equ:relation_dendr_gamma_7_alternative}
    which generate, by Theorem~\ref{thm:presentation_dendr_gamma}, the
    space of relations of $\Dendr_\gamma$. Therefore, we have
    $\pi(x \circ_1 x - x \circ_2 x) = 0$, implying
    $\OpAsDendrA_b \circ_1 \OpAsDendrA_b -
    \OpAsDendrA_b \circ_2 \OpAsDendrA_b = 0$ and
    the associativity of $\OpAsDendrA_b$.
\end{proof}
\medskip

\subsubsection{Alternative presentation}
For any integer $\gamma \geq 0$, let $\GDendr_b$ and $\DDendr_b$,
$b \in [\gamma]$, the elements of $\OpLibre\left(\GenDendr\right)$
defined by
\begin{subequations}
\begin{equation} \label{equ:definition_operateur_dendr_gauche}
    \GDendr_b := \sum_{a \in [b]} \GDendrA_a,
\end{equation}
and
\begin{equation} \label{equ:definition_operateur_dendr_droite}
    \DDendr_b := \sum_{a \in [b]} \DDendrA_a.
\end{equation}
\end{subequations}
Then, since for all $b \in [\gamma]$ we have
\begin{subequations}
\begin{equation}
    \GDendrA_b =
    \begin{cases}
        \GDendr_1 & \mbox{if } b = 1, \\
        \GDendr_b - \GDendr_{b - 1} & \mbox{otherwise},
    \end{cases}
\end{equation}
and
\begin{equation}
    \DDendrA_b =
    \begin{cases}
        \DDendr_1 & \mbox{if } b = 1, \\
        \DDendr_b - \DDendr_{b - 1} & \mbox{otherwise},
    \end{cases}
\end{equation}
\end{subequations}
by triangularity, the family
$\GenDendr' := \{\GDendr_b, \DDendr_b : b \in [\gamma]\}$ forms a
basis of $\OpLibre\left(\GenDendr\right)(2)$ and then, generates
$\OpLibre\left(\GenDendr\right)$ as an operad. This change of basis
from $\OpLibre\left(\GenDendr\right)$ to $\OpLibre(\GenDendr')$
is similar to the change of basis from $\OpLibre(\GenDias')$
to $\OpLibre\left(\GenDias\right)$ introduced in
Section~2.3.6 of~\cite{GirI}.
Let us now express a presentation of $\Dendr_\gamma$ through the
family~$\GenDendr'$.
\medskip

\begin{Theoreme} \label{thm:autre_presentation_dendr_gamma}
    For any integer $\gamma \geq 0$, the operad $\Dendr_\gamma$
    admits the following presentation. It is generated by $\GenDendr'$
    and its space of relations $\RelDendr'$ is generated by
    \begin{subequations}
    \begin{equation} \label{equ:relation_dendr_gamma_1}
        \GDendr_a \circ_1 \DDendr_{a'} - \DDendr_{a'} \circ_2 \GDendr_a,
        \qquad a, a' \in [\gamma],
    \end{equation}
    \begin{equation} \label{equ:relation_dendr_gamma_2}
        \GDendr_a \circ_1 \GDendr_b
            - \GDendr_a \circ_2 \DDendr_b
            - \GDendr_a \circ_2 \GDendr_a,
        \qquad a < b \in [\gamma],
    \end{equation}
    \begin{equation} \label{equ:relation_dendr_gamma_3}
            \DDendr_a \circ_1 \DDendr_a
            + \DDendr_a \circ_1 \GDendr_b
            - \DDendr_a \circ_2 \DDendr_b,
            \qquad a < b \in [\gamma],
    \end{equation}
    \begin{equation} \label{equ:relation_dendr_gamma_4}
        \GDendr_b \circ_1 \GDendr_a
            - \GDendr_a \circ_2 \GDendr_b
            - \GDendr_a \circ_2 \DDendr_a,
        \qquad a < b \in [\gamma],
    \end{equation}
    \begin{equation} \label{equ:relation_dendr_gamma_5}
        \DDendr_a \circ_1 \GDendr_a
            + \DDendr_a \circ_1 \DDendr_b
            - \DDendr_b \circ_2 \DDendr_a,
        \qquad a < b \in [\gamma],
    \end{equation}
    \begin{equation} \label{equ:relation_dendr_gamma_6}
        \GDendr_a \circ_1 \GDendr_a
            - \GDendr_a \circ_2 \DDendr_a
            - \GDendr_a \circ_2 \GDendr_a,
        \qquad a \in [\gamma],
    \end{equation}
    \begin{equation} \label{equ:relation_dendr_gamma_7}
        \DDendr_a \circ_1 \DDendr_a
            + \DDendr_a \circ_1 \GDendr_a
            - \DDendr_a \circ_2 \DDendr_a,
        \qquad a \in [\gamma].
    \end{equation}
    \end{subequations}
\end{Theoreme}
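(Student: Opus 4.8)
The plan is to derive this second presentation from the first one (Theorem~\ref{thm:presentation_dendr_gamma}) purely by the change of generating family, so that the whole content reduces to an identity between two spaces of relations. The discussion preceding the statement already shows, by triangularity of \eqref{equ:definition_operateur_dendr_gauche} and \eqref{equ:definition_operateur_dendr_droite}, that $\GenDendr'$ is a basis of $\OpLibre(\GenDendr)(2)$, so that $\OpLibre(\GenDendr') = \OpLibre(\GenDendr)$ and, in arity $3$, the two free operads share the same component $\OpLibre(\GenDendr)(3)$. Hence it suffices to prove that the subspace $\RelDendr'$ spanned by \eqref{equ:relation_dendr_gamma_1}--\eqref{equ:relation_dendr_gamma_7} coincides with the subspace $\RelDendr$ of relations of $\Dendr_\gamma$ furnished by Theorem~\ref{thm:presentation_dendr_gamma}.

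I would first pin down the dimension that both families must reach. Since $\GenDendr$ consists of $2\gamma$ binary generators, $\dim \OpLibre(\GenDendr)(3) = 2(2\gamma)^2 = 8\gamma^2$, while Proposition~\ref{prop:serie_hilbert_dendr_gamma} yields $\dim \Dendr_\gamma(3) = \gamma^2\,\Cat(3) = 5\gamma^2$; therefore $\dim \RelDendr = 3\gamma^2$. On the other hand the seven families \eqref{equ:relation_dendr_gamma_1}--\eqref{equ:relation_dendr_gamma_7} comprise $\gamma^2 + 4\binom{\gamma}{2} + 2\gamma = 3\gamma^2$ elements. Consequently it is enough to establish the single inclusion $\RelDendr' \subseteq \RelDendr$ together with the linear independence of \eqref{equ:relation_dendr_gamma_1}--\eqref{equ:relation_dendr_gamma_7}, for these two facts and the equality of dimensions force $\RelDendr' = \RelDendr$.

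For the independence I would run a leading-monomial argument in the $\GenDendr'$-basis of $\OpLibre(\GenDendr')(3)$: each family selects composition monomials $u \circ_i v$ that occur in it alone and that determine its indices, for instance $\GDendr_a \circ_1 \DDendr_{a'}$ for \eqref{equ:relation_dendr_gamma_1}, the monomials $\GDendr_a \circ_1 \GDendr_b$ with $a<b$ for \eqref{equ:relation_dendr_gamma_2} as opposed to $\GDendr_b \circ_1 \GDendr_a$ for \eqref{equ:relation_dendr_gamma_4} and $\GDendr_a \circ_1 \GDendr_a$ for \eqref{equ:relation_dendr_gamma_6}, and so on; this exhibits a triangular (staircase) pattern and yields independence at once. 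For the inclusion I would substitute \eqref{equ:definition_operateur_dendr_gauche} and \eqref{equ:definition_operateur_dendr_droite} into each relation, expand bilinearly over $\circ_1$ and $\circ_2$, and reorganize the resulting double sums into combinations of \eqref{equ:relation_dendr_gamma_1_alternative}--\eqref{equ:relation_dendr_gamma_7_alternative}, the telescoping carried by the partial sums $\GDendr_b = \sum_{a \in [b]} \GDendrA_a$ and $\DDendr_b = \sum_{a \in [b]} \DDendrA_a$ being what lets the sums close up on the old relations. Equivalently, and in exact parallel with the proof of Theorem~\ref{thm:presentation_dendr_gamma}, one may compute $\RelDias^\perp$ directly from the presentation of $\Dias_\gamma$ over its $\Ksf$-basis $\GenDias'$ given in Section~2.3.6 of~\cite{GirI}, after identifying each $\GDendr_a$ and $\DDendr_a$ with the corresponding dual generator.

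The step I expect to be the main obstacle is this inclusion: the bookkeeping is delicate because one must simultaneously track the three index regimes $a<b$, $a>b$ and $a=b$ --- exactly the distinction separating \eqref{equ:relation_dendr_gamma_2}--\eqref{equ:relation_dendr_gamma_5} from the diagonal relations \eqref{equ:relation_dendr_gamma_6}--\eqref{equ:relation_dendr_gamma_7} --- and keep the off-diagonal and diagonal contributions of each expanded double sum from interfering. Everything else, including the independence and the dimension count, is then routine.
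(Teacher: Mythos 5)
Your proposal is correct and takes essentially the same route as the paper's proof: establish the inclusion $\RelDendr' \subseteq \RelDendr$ by expanding each of the seven families over the generators $\GDendrA_a$, $\DDendrA_a$ via \eqref{equ:definition_operateur_dendr_gauche} and \eqref{equ:definition_operateur_dendr_droite}, observe the linear independence of these families, and conclude by comparing dimensions ($3\gamma^2$ on both sides). The only cosmetic difference is that you compute $\dim \RelDendr = 8\gamma^2 - 5\gamma^2 = 3\gamma^2$ from the Hilbert series of Proposition~\ref{prop:serie_hilbert_dendr_gamma}, whereas the paper reads this dimension off the independent generating family of Theorem~\ref{thm:presentation_dendr_gamma}; both computations are valid and non-circular.
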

\begin{proof}
    Let us show that $\RelDendr'$ is equal to the space of relations
    $\RelDendr$ of $\Dendr_\gamma$ defined in the statement of
    Theorem~\ref{thm:presentation_dendr_gamma}. By this last theorem,
    for any $x \in \OpLibre\left(\GenDendr\right)(3)$, $x$ is in
    $\RelDendr$ if and only if $\pi(x) = 0$ where
    $\pi : \OpLibre\left(\GenDendr\right) \to \Dendr_\gamma$ is the
    canonical surjection map. By straightforward computations, by
    expanding any element $x$ of~\eqref{equ:relation_dendr_gamma_1}---%
    \eqref{equ:relation_dendr_gamma_7} over the elements
    $\GDendrA_a$, $\DDendrA_a$, $a \in [\gamma]$, by
    using~\eqref{equ:definition_operateur_dendr_gauche}
    and~\eqref{equ:definition_operateur_dendr_droite} we obtain that
    $x$ can be expressed as a sum of elements of $\RelDendr$. This
    implies that $\pi(x) = 0$ and hence that $\RelDendr'$ is a subspace
    of $\RelDendr$.
    \smallskip

    Now, one can observe that
    elements~\eqref{equ:relation_dendr_gamma_1}---%
    \eqref{equ:relation_dendr_gamma_6} are linearly independent.
    Then, $\RelDendr'$ has dimension $3\gamma^2$ which is also, by
    Theorem~\ref{thm:presentation_dendr_gamma}, the dimension of
    $\RelDendr$. The statement of the theorem follows.
\end{proof}
\medskip

The presentation of $\Dendr_\gamma$ provided by
Theorem~\ref{thm:autre_presentation_dendr_gamma} is easier to
handle than the one provided by Theorem \ref{thm:presentation_dendr_gamma}.
The main reason is that Relations~\eqref{equ:relation_dendr_gamma_6_alternative}
and~\eqref{equ:relation_dendr_gamma_7_alternative} of the first
presentation involve a nonconstant number of terms, while all relations
of this second presentation always involve only two or three terms. As a
very remarkable fact, it is worthwhile to note that the presentation of
$\Dendr_\gamma$ provided by Theorem~\ref{thm:autre_presentation_dendr_gamma}
can be directly obtained by considering the Koszul dual of $\Dias_\gamma$
over the $\Ksf$-basis (see Sections~2.3.5
and~2.3.6 of~\cite{GirI}).
Therefore, an alternative way to establish this presentation consists in
computing the Koszul dual of $\Dias_\gamma$ seen through the presentation
having $\RelDendr'$ as space of relations, which is made of the relations
of $\Dias_\gamma$ expressed over the $\Ksf$-basis
(see Proposition~2.3.8
of~\cite{GirI}).
\medskip

From now on, $\Min$ denotes the operation $\min$ on integers. Using this
notation, the space of relations $\RelDendr'$ of $\Dendr_\gamma$
exhibited by Theorem~\ref{thm:autre_presentation_dendr_gamma}
can be rephrased in a more compact way as the space generated by
\begin{subequations}
\begin{equation} \label{equ:relation_dendr_gamma_1_concise}
    \GDendr_a \circ_1 \DDendr_{a'} - \DDendr_{a'} \circ_2 \GDendr_a,
    \qquad a, a' \in [\gamma],
\end{equation}
\begin{equation} \label{equ:relation_dendr_gamma_2_concise}
    \GDendr_a \circ_1 \GDendr_{a'}
        - \GDendr_{a \Min a'} \circ_2 \GDendr_a
        - \GDendr_{a \Min a'} \circ_2 \DDendr_{a'},
    \qquad a, a' \in [\gamma],
\end{equation}
\begin{equation} \label{equ:relation_dendr_gamma_3_concise}
    \DDendr_{a \Min a'} \circ_1 \GDendr_{a'}
        + \DDendr_{a \Min a'} \circ_1 \DDendr_a
        - \DDendr_a \circ_2 \DDendr_{a'},
    \qquad a, a' \in [\gamma].
\end{equation}
\end{subequations}
\medskip

Over the family $\GenDendr'$, one can build associative operations in
$\Dendr_\gamma$ in the following way.
\medskip

\begin{Proposition} \label{prop:operateur_associatif_dendr_gamma}
    For any integers $\gamma \geq 0$ and $b \in [\gamma]$, the element
    \begin{equation}
        \OpAsDendr_b := \pi(\GDendr_b + \DDendr_b)
    \end{equation}
    of $\Dendr_\gamma$, where
    $\pi : \OpLibre(\GenDendr') \to \Dendr_\gamma$ is the
    canonical surjection map, is associative.
\end{Proposition}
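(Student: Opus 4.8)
The plan is to recognize that $\OpAsDendr_b$ is nothing but the associative element $\OpAsDendrA_b$ already constructed in Proposition~\ref{prop:operateur_associatif_dendr_gamma_autre}, so that the statement follows with no new work. Indeed, expanding $\GDendr_b$ and $\DDendr_b$ through their defining formulas~\eqref{equ:definition_operateur_dendr_gauche} and~\eqref{equ:definition_operateur_dendr_droite} gives
\begin{equation*}
    \GDendr_b + \DDendr_b
        = \sum_{a \in [b]} \GDendrA_a + \sum_{a \in [b]} \DDendrA_a
        = \sum_{a \in [b]} \GDendrA_a + \DDendrA_a,
\end{equation*}
which is exactly the element $x$ appearing in the proof of Proposition~\ref{prop:operateur_associatif_dendr_gamma_autre}. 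Applying the canonical surjection $\pi$ therefore yields $\OpAsDendr_b = \OpAsDendrA_b$, and the associativity of the latter has already been established.

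As an independent verification relying only on the presentation of the present subsection, I would instead check directly that the difference $(\GDendr_b + \DDendr_b) \circ_1 (\GDendr_b + \DDendr_b) - (\GDendr_b + \DDendr_b) \circ_2 (\GDendr_b + \DDendr_b)$ lies in $\RelDendr'$. Expanding the two partial compositions produces the eight terms $\GDendr_b \circ_i \GDendr_b$, $\GDendr_b \circ_i \DDendr_b$, $\DDendr_b \circ_i \GDendr_b$, and $\DDendr_b \circ_i \DDendr_b$ for $i \in \{1, 2\}$, with a plus sign for $i = 1$ and a minus sign for $i = 2$. The key observation is that this combination is precisely the sum of the three generators of $\RelDendr'$ obtained from Theorem~\ref{thm:autre_presentation_dendr_gamma} by taking $a = a' = b$ in Relation~\eqref{equ:relation_dendr_gamma_1} and $a = b$ in Relations~\eqref{equ:relation_dendr_gamma_6} and~\eqref{equ:relation_dendr_gamma_7}. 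Since each of these three elements is annihilated by $\pi$, so is their sum, which forces $\OpAsDendr_b \circ_1 \OpAsDendr_b = \OpAsDendr_b \circ_2 \OpAsDendr_b$ and hence the associativity of $\OpAsDendr_b$.

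There is essentially no serious obstacle in either route. The only point requiring a little care, in the second approach, is matching the signs correctly when collecting the eight terms: the two ``diagonal'' relations~\eqref{equ:relation_dendr_gamma_6} and~\eqref{equ:relation_dendr_gamma_7} are the ones carrying the $\circ_1$--$\circ_2$ asymmetry of the $\GDendr_b \circ_i \GDendr_b$ and $\DDendr_b \circ_i \DDendr_b$ terms, whereas Relation~\eqref{equ:relation_dendr_gamma_1} supplies the mixed cross terms $\GDendr_b \circ_i \DDendr_b$ and $\DDendr_b \circ_i \GDendr_b$. Once this bookkeeping is carried out, the verification closes immediately, and I would favour recording the one-line identification $\OpAsDendr_b = \OpAsDendrA_b$ as the primary argument.
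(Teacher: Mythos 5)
Your primary argument is exactly the paper's proof: it identifies $\OpAsDendr_b$ with $\OpAsDendrA_b$ by expanding the defining sums~\eqref{equ:definition_operateur_dendr_gauche} and~\eqref{equ:definition_operateur_dendr_droite} and then invokes Proposition~\ref{prop:operateur_associatif_dendr_gamma_autre}. Your supplementary direct verification is also correct as stated --- the eight expanded terms are precisely the sum of Relation~\eqref{equ:relation_dendr_gamma_1} at $a = a' = b$ with Relations~\eqref{equ:relation_dendr_gamma_6} and~\eqref{equ:relation_dendr_gamma_7} at $a = b$ --- but it is redundant given the identification, which, as you note, suffices on its own.
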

\begin{proof}
    By definition of the $\GDendr_b$ and $\DDendr_b$, $b \in [\gamma]$,
    we have
    \begin{equation}
        \GDendr_b + \DDendr_b =
        \sum_{a \in [b]} \GDendrA_a + \DDendrA_a.
    \end{equation}
    We hence observe that $\OpAsDendr_b = \OpAsDendrA_b$, where
    $\OpAsDendrA_b$ is the element of $\Dendr_\gamma$ defined in the
    statement of Proposition~\ref{prop:operateur_associatif_dendr_gamma_autre}.
    Hence, by this latter proposition, $\OpAsDendr_b$ is associative.
\end{proof}
\medskip

\begin{Proposition}
\label{prop:description_operateurs_associatifs_dendr_gamma}
    For any integer $\gamma \geq 0$, any associative element of
    $\Dendr_\gamma$ is proportional to $\OpAsDendr_b$ for a
    $b \in [\gamma]$.
\end{Proposition}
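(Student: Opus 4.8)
The plan is to work over the basis $\GenDendr' = \{\GDendr_b, \DDendr_b : b \in [\gamma]\}$ and to use the compact relations \eqref{equ:relation_dendr_gamma_1_concise}, \eqref{equ:relation_dendr_gamma_2_concise}, and \eqref{equ:relation_dendr_gamma_3_concise}. Since the relations are concentrated in arity $3$, the space $\Dendr_\gamma(2)$ has dimension $2\gamma$, so a general element reads $x = \sum_{b \in [\gamma]} \lambda_b \GDendr_b + \mu_b \DDendr_b$ with scalars $\lambda_b, \mu_b \in \K$. First I would read the three families of concise relations as rewriting rules expressing every $\GDendr_a \circ_1 \DDendr_{a'}$, every $\GDendr_a \circ_1 \GDendr_{a'}$, and every $\DDendr_a \circ_2 \DDendr_{a'}$ as combinations of the five remaining families $\DDendr_a \circ_1 \GDendr_{a'}$, $\DDendr_a \circ_1 \DDendr_{a'}$, $\GDendr_a \circ_2 \GDendr_{a'}$, $\GDendr_a \circ_2 \DDendr_{a'}$, and $\DDendr_a \circ_2 \GDendr_{a'}$. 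A glance at the right-hand sides of \eqref{equ:relation_dendr_gamma_1_concise}--\eqref{equ:relation_dendr_gamma_3_concise} shows that they involve only these latter monomials, so the $5\gamma^2$ of them span $\Dendr_\gamma(3)$; as $\dim \Dendr_\gamma(3) = \gamma^2 \Cat(3) = 5\gamma^2$, they form a basis.

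Next I would compute $\pi(x \circ_1 x - x \circ_2 x)$ and expand it in this basis by applying the rewriting rules to the monomials $\GDendr_b \circ_1 \GDendr_c$ and $\GDendr_b \circ_1 \DDendr_c$ arising from $x \circ_1 x$ and to the monomials $\DDendr_b \circ_2 \DDendr_c$ arising from $x \circ_2 x$. Reading off the coefficient of each basis monomial yields a system of quadratic equations in the $\lambda_b, \mu_b$ equivalent to the associativity of $x$. The bookkeeping is the delicate part: the $\Min$ in \eqref{equ:relation_dendr_gamma_2_concise} and \eqref{equ:relation_dendr_gamma_3_concise} splits each extraction into the cases $p < q$, $p = q$, $p > q$ for the two indices of the target monomial. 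One finds that the coefficients of the monomials $\DDendr_p \circ_2 \GDendr_q$ and of all $p < q$ monomials vanish identically, while the equations indexed by pairs $p > q$ read
\begin{equation}
    \lambda_p \lambda_q = \lambda_p \mu_q = \mu_p \lambda_q = \mu_p \mu_q = 0.
\end{equation}

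Then I would exploit this block. Setting $v_b := (\lambda_b, \mu_b) \in \K^2$, the four vanishing products for $p > q$ say exactly that $v_p$ and $v_q$ cannot both be nonzero; hence at most one index $b \in [\gamma]$ satisfies $v_b \neq (0,0)$. If none does, then $x = 0$, which is proportional to every $\OpAsDendr_b$. Otherwise there is a single such $b$, all other $\lambda_c, \mu_c$ vanish, and the diagonal equations (those indexed by $p = q = b$) collapse to $\lambda_b(\lambda_b - \mu_b) = 0$ and $\mu_b(\lambda_b - \mu_b) = 0$; since $(\lambda_b, \mu_b) \neq (0,0)$, these force $\lambda_b = \mu_b$. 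Writing $t := \lambda_b = \mu_b$, we get $x = t(\GDendr_b + \DDendr_b) = t\, \OpAsDendr_b$ by Proposition~\ref{prop:operateur_associatif_dendr_gamma}. I expect the main obstacle to be the normal-form reduction and the correct extraction of the coefficient system, in particular the case analysis forced by the $\Min$; once the system is written down, the vanishing of the mixed products for $p > q$ makes the single-index conclusion immediate.
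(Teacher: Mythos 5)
Your proposal is correct and is, at bottom, the same argument as the paper's: expand a candidate associative element over the basis $\GenDendr'$, use the concise relations \eqref{equ:relation_dendr_gamma_1_concise}--\eqref{equ:relation_dendr_gamma_3_concise} to turn $\pi(x \circ_1 x - x \circ_2 x) = 0$ into a quadratic system on the coefficients, exploit the monomials absent from the relation space to kill all cross terms between distinct indices (your $p > q$ block is exactly the paper's ``further constraints'' $\beta_b \beta_a = \beta_b \alpha_a = \alpha_b \alpha_a = \alpha_b \beta_a = 0$ for $a < b$), and finish with the diagonal equations forcing $\lambda_b = \mu_b$. Your organization is actually cleaner than the paper's: making explicit that the five families $\DDendr \circ_1 \GDendr$, $\DDendr \circ_1 \DDendr$, $\GDendr \circ_2 \GDendr$, $\GDendr \circ_2 \DDendr$, $\DDendr \circ_2 \GDendr$ form a basis of $\Dendr_\gamma(3)$ (each relation has a distinct leading monomial, the right-hand sides lie in these five families, and $\dim \Dendr_\gamma(3) = \gamma^2 \Cat(3) = 5\gamma^2$) rigorously justifies the coefficient extraction that the paper performs implicitly.

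One assertion in your bookkeeping is false, though harmlessly so here: it is not true that all coefficients indexed by $p < q$ vanish identically. Those of $\GDendr_p \circ_2 \GDendr_q$, $\DDendr_p \circ_1 \DDendr_q$, and $\DDendr_p \circ_2 \GDendr_q$ do cancel, but the coefficient of $\GDendr_p \circ_2 \DDendr_q$ for $p < q$ is $\lambda_p \lambda_q - \lambda_p \mu_q$ and that of $\DDendr_p \circ_1 \GDendr_q$ is $\mu_p \lambda_q - \mu_p \mu_q$: rewriting $\GDendr_p \circ_1 \GDendr_q$ by \eqref{equ:relation_dendr_gamma_2_concise} with $p < q$ produces $\GDendr_{p \Min q} \circ_2 \DDendr_q = \GDendr_p \circ_2 \DDendr_q$, which meets $-\lambda_p \mu_q \, \GDendr_p \circ_2 \DDendr_q$ coming from $x \circ_2 x$, and dually for \eqref{equ:relation_dendr_gamma_3_concise}. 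So the system contains the genuine extra constraints $\lambda_p(\lambda_q - \mu_q) = 0$ and $\mu_p(\lambda_q - \mu_q) = 0$ for $p < q$. Your conclusion survives because you only ever use the $p > q$ block and the diagonal equations, both of which you derive correctly, and these already force $x = t(\GDendr_b + \DDendr_b) = t \, \OpAsDendr_b$: discarding necessary conditions can only enlarge the solution set, and your enlarged set is already contained in the scalar multiples of the $\OpAsDendr_b$, which are associative by Proposition~\ref{prop:operateur_associatif_dendr_gamma}, so the omitted equations are automatically satisfied by the survivors. Still, correct the claim: had the subsystem you retained been too weak to pin down $x$, this slip would have been fatal rather than cosmetic.
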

\begin{proof}
    Let $\pi : \OpLibre(\GenDendr') \to \Dendr_\gamma$ be the canonical
    surjection map. Consider the element
    \begin{equation}
        x :=
        \sum_{a \in [\gamma]} \alpha_a \GDendr_a + \beta_a \DDendr_a
    \end{equation}
    of $\OpLibre(\GenDendr')$, where $\alpha_a, \beta_a \in \K$ for all
    $a \in [\gamma]$, such that $\pi(x)$ is associative in $\Dendr_\gamma$.
    Since we have $\pi(r) = 0$ for all elements $r$ of $\RelDendr'$
    (see~\eqref{equ:relation_dendr_gamma_1_concise},
    \eqref{equ:relation_dendr_gamma_2_concise},
    and~\eqref{equ:relation_dendr_gamma_3_concise}),
    the fact that $\pi(x \circ_1 x - x \circ_2 x) = 0$ implies the
    constraints
    \begin{equation}\begin{split}
        \alpha_a \, \beta_{a'} = \beta_{a'} \, \alpha_a,
        & \qquad a, a' \in [\gamma], \\
        \alpha_a \, \alpha_{a'} = \alpha_{a \Min a'} \, \alpha_a =
        \alpha_{a \Min a'} \, \beta_{a'}, & \qquad a, a' \in [\gamma], \\
        \beta_{a \Min a'} \, \alpha_{a'} = \beta_{a \Min a'} \, \beta_a =
        \beta_a \, \beta_{a'}, & \qquad a, a' \in [\gamma],
    \end{split}\end{equation}
    on the coefficients intervening in $x$. Moreover, since the syntax
    trees $\DDendr_b \circ_1 \DDendr_a$, $\DDendr_b \circ_1 \GDendr_a$,
    $\GDendr_b \circ_2 \GDendr_a$, and $\GDendr_b \circ_2 \DDendr_a$ do
    not appear in $\RelDendr'$ for all $a < b \in [\gamma]$ , we have the
    further constraints
    \begin{equation}\begin{split}
        \beta_b \, \beta_a & = 0, \qquad a < b \in [\gamma], \\
        \beta_b \, \alpha_a & = 0, \qquad a < b \in [\gamma], \\
        \alpha_b \, \alpha_a & = 0, \qquad a < b \in [\gamma], \\
        \alpha_b \, \beta_a & = 0, \qquad a < b \in [\gamma].
    \end{split}\end{equation}
    These relations imply that there are at most one $c \in [\gamma]$ and
    one $d \in [\gamma]$ such that $\alpha_c \ne 0$ and $\beta_d \ne 0$.
    In this case, these relations imply also that $c = d$, and
    $\alpha_c = \beta_c$. Therefore, $x$ is of the form
    $x = \alpha_a \GDendr_a + \alpha_a \DDendr_a$ for an $a \in [\gamma]$,
    whence the statement of the proposition.
\end{proof}
\medskip

\subsection{Category of polydendriform algebras and free objects}
The aim of this section is to describe the category of
$\Dendr_\gamma$-algebras and more particularly the free
$\Dendr_\gamma$-algebra over one generator.
\medskip

\subsubsection{Polydendriform algebras}
We call {\em $\gamma$-polydendriform algebra} any
$\Dendr_\gamma$-algebra. From the presentation of $\Dendr_\gamma$
provided by Theorem~\ref{thm:presentation_dendr_gamma}, any
$\gamma$-polydendriform algebra is a vector space endowed with linear
operations $\GDendrA_a, \DDendrA_a$, $a \in [\gamma]$, satisfying the
relations encoded by~\eqref{equ:relation_dendr_gamma_1_alternative}---%
\eqref{equ:relation_dendr_gamma_7_alternative}. By considering the
presentation of $\Dendr_\gamma$ exhibited by
Theorem~\ref{thm:autre_presentation_dendr_gamma}, any
$\gamma$-polydendriform algebra is a vector space endowed with linear
operations $\GDendr_a, \DDendr_a$, $a \in [\gamma]$, satisfying the
relations encoded by~\eqref{equ:relation_dendr_gamma_1_concise}---%
\eqref{equ:relation_dendr_gamma_3_concise}.
\medskip

\subsubsection{Two ways to split associativity}
Like dendriform algebras, which offer a way to split an associative
operation into two parts, $\gamma$-polydendriform algebras propose
two ways to split associativity depending on its chosen presentation.
\medskip

On the one hand, in a $\gamma$-polydendriform algebra $\Dca$ over the
operations $\GDendrA_a$, $\DDendrA_a$, $a \in [\gamma]$,
by Proposition~\ref{prop:operateur_associatif_dendr_gamma_autre}, an
associative operation $\OpAsDendrA$ is split into the $2\gamma$ operations
$\GDendrA_a$, $\DDendrA_a$, $a \in [\gamma]$, so that for all $x, y \in \Dca$,
\begin{equation}
    x \OpAsDendrA y =
    \sum_{a \in [\gamma]}  x \GDendrA_a y + x \DDendrA_a y,
\end{equation}
and all partial sums operations $\OpAsDendrA_b$, $b \in [\gamma]$,
satisfying
\begin{equation}
    x \OpAsDendrA_b y = \sum_{a \in [b]} x \GDendrA_a y + x \DDendrA_a x,
\end{equation}
also are associative.
\medskip

On the other hand, in a $\gamma$-polydendriform algebra over the operations
$\GDendr_a$, $\DDendr_a$, $a \in [\gamma]$, by
Proposition~\ref{prop:operateur_associatif_dendr_gamma}, several
associative operations $\OpAsDendr_a$, $a \in [\gamma]$, are each split
into two operations $\GDendr_a$, $\DDendr_a$, $a \in [\gamma]$, so
that for all $x, y \in \Dca$,
\begin{equation}
    x \OpAsDendr_a y = x \GDendr_a y + x \DDendr_a y.
\end{equation}
\medskip

Therefore, we can observe that $\gamma$-polydendriform algebras over the
operations $\GDendrA_a$, $\DDendrA_a$, $a \in [\gamma]$, are adapted to
study associative algebras (by splitting its single product in the way
we have described above) while $\gamma$-polydendriform algebras over the
operations $\GDendr_a$, $\DDendr_a$, $a \in [\gamma]$, are adapted to
study vectors spaces endowed with several associative products (by
splitting each one in the way we have described above). Algebras with
several associative products will be studied in Section~\ref{sec:as_gamma}.
\medskip

\subsubsection{Free polydendriform algebras}
From now, in order to simplify and make uniform next definitions, we
consider that in any $\gamma$-edge valued binary tree $\Tfr$, all edges
connecting internal nodes of $\Tfr$ with leaves are labeled by
$\infty$. By convention, for all $a \in [\gamma]$, we have
$a \Min \infty = a = \infty \Min a$.
\medskip

Let us endow the vector space $\AlgLibre_{\Dendr_\gamma}$
of $\gamma$-edge valued binary trees
with linear operations
\begin{equation}
    \GDendr_a, \DDendr_a :
    \AlgLibre_{\Dendr_\gamma} \otimes \AlgLibre_{\Dendr_\gamma}
    \to \AlgLibre_{\Dendr_\gamma},
    \qquad
    a \in [\gamma],
\end{equation}
recursively defined, for any $\gamma$-edge valued binary tree $\Sfr$
and any $\gamma$-edge valued binary trees or leaves $\Tfr_1$ and $\Tfr_2$
by
\begin{equation}
    \Sfr \GDendr_a \Feuille
    := \Sfr =:
    \Feuille \DDendr_a \Sfr,
\end{equation}
\begin{equation}
    \Feuille \GDendr_a \Sfr := 0 =: \Sfr \DDendr_a \Feuille,
\end{equation}
\begin{equation}
    \ArbreBinValue{x}{y}{\Tfr_1}{\Tfr_2}
    \GDendr_a \Sfr :=
    \ArbreBinValue{x}{z}{\Tfr_1}{\Tfr_2 \GDendr_a \Sfr}
    +
    \ArbreBinValue{x}{z}{\Tfr_1}{\Tfr_2 \DDendr_y \Sfr}\,,
    \qquad
    z := a \Min y,
\end{equation}
\begin{equation}
    \begin{split}\Sfr \DDendr_a\end{split}
    \ArbreBinValue{x}{y}{\Tfr_1}{\Tfr_2}
    :=
    \ArbreBinValue{z}{y}{\Sfr \DDendr_a \Tfr_1}{\Tfr_2}
    +
    \ArbreBinValue{z}{y}{\Sfr \GDendr_x \Tfr_1}{\Tfr_2}\,,
    \qquad
    z := a \Min x.
\end{equation}
Note that neither $\Feuille \GDendr_a \Feuille$ nor
$\Feuille \DDendr_a \Feuille$ are defined.
\medskip

For example, we have
\begin{multline}
    \begin{split}
    \begin{tikzpicture}[xscale=.25,yscale=.2]
        \node[Feuille](0)at(0.00,-4.50){};
        \node[Feuille](2)at(2.00,-4.50){};
        \node[Feuille](4)at(4.00,-4.50){};
        \node[Feuille](6)at(6.00,-6.75){};
        \node[Feuille](8)at(8.00,-6.75){};
        \node[Noeud](1)at(1.00,-2.25){};
        \node[Noeud](3)at(3.00,0.00){};
        \node[Noeud](5)at(5.00,-2.25){};
        \node[Noeud](7)at(7.00,-4.50){};
        \draw[Arete](0)--(1);
        \draw[Arete](1)edge[]node[EtiqArete]{\begin{math}1\end{math}}(3);
        \draw[Arete](2)--(1);
        \draw[Arete](4)--(5);
        \draw[Arete](5)edge[]node[EtiqArete]{\begin{math}3\end{math}}(3);
        \draw[Arete](6)--(7);
        \draw[Arete](7)edge[]node[EtiqArete]{\begin{math}1\end{math}}(5);
        \draw[Arete](8)--(7);
        \node(r)at(3.00,1.7){};
        \draw[Arete](r)--(3);
    \end{tikzpicture}
    \end{split}
    \GDendr_2
    \begin{split}
    \begin{tikzpicture}[xscale=.25,yscale=.2]
        \node[Feuille](0)at(0.00,-4.67){};
        \node[Feuille](2)at(2.00,-4.67){};
        \node[Feuille](4)at(4.00,-4.67){};
        \node[Feuille](6)at(6.00,-4.67){};
        \node[Noeud](1)at(1.00,-2.33){};
        \node[Noeud](3)at(3.00,0.00){};
        \node[Noeud](5)at(5.00,-2.33){};
        \draw[Arete](0)--(1);
        \draw[Arete](1)edge[]node[EtiqArete]{\begin{math}1\end{math}}(3);
        \draw[Arete](2)--(1);
        \draw[Arete](4)--(5);
        \draw[Arete](5)edge[]node[EtiqArete]{\begin{math}2\end{math}}(3);
        \draw[Arete](6)--(5);
        \node(r)at(3.00,1.7){};
        \draw[Arete](r)--(3);
    \end{tikzpicture}
    \end{split}
    =
    \begin{split}
    \begin{tikzpicture}[xscale=.22,yscale=.15]
        \node[Feuille](0)at(0.00,-5.00){};
        \node[Feuille](10)at(10.00,-12.50){};
        \node[Feuille](12)at(12.00,-12.50){};
        \node[Feuille](14)at(14.00,-12.50){};
        \node[Feuille](2)at(2.00,-5.00){};
        \node[Feuille](4)at(4.00,-5.00){};
        \node[Feuille](6)at(6.00,-7.50){};
        \node[Feuille](8)at(8.00,-12.50){};
        \node[Noeud](1)at(1.00,-2.50){};
        \node[Noeud](11)at(11.00,-7.50){};
        \node[Noeud](13)at(13.00,-10.00){};
        \node[Noeud](3)at(3.00,0.00){};
        \node[Noeud](5)at(5.00,-2.50){};
        \node[Noeud](7)at(7.00,-5.00){};
        \node[Noeud](9)at(9.00,-10.00){};
        \draw[Arete](0)--(1);
        \draw[Arete](1)edge[]node[EtiqArete]{\begin{math}1\end{math}}(3);
        \draw[Arete](10)--(9);
        \draw[Arete](11)edge[]node[EtiqArete]{\begin{math}2\end{math}}(7);
        \draw[Arete](12)--(13);
        \draw[Arete](13)edge[]node[EtiqArete]{\begin{math}2\end{math}}(11);
        \draw[Arete](14)--(13);
        \draw[Arete](2)--(1);
        \draw[Arete](4)--(5);
        \draw[Arete](5)edge[]node[EtiqArete]{\begin{math}2\end{math}}(3);
        \draw[Arete](6)--(7);
        \draw[Arete](7)edge[]node[EtiqArete]{\begin{math}1\end{math}}(5);
        \draw[Arete](8)--(9);
        \draw[Arete](9)edge[]node[EtiqArete]{\begin{math}1\end{math}}(11);
        \node(r)at(3.00,2.50){};
        \draw[Arete](r)--(3);
    \end{tikzpicture}
    \end{split}
    +
    \begin{split}
    \begin{tikzpicture}[xscale=.22,yscale=.15]
        \node[Feuille](0)at(0.00,-5.00){};
        \node[Feuille](10)at(10.00,-12.50){};
        \node[Feuille](12)at(12.00,-10.00){};
        \node[Feuille](14)at(14.00,-10.00){};
        \node[Feuille](2)at(2.00,-5.00){};
        \node[Feuille](4)at(4.00,-5.00){};
        \node[Feuille](6)at(6.00,-10.00){};
        \node[Feuille](8)at(8.00,-12.50){};
        \node[Noeud](1)at(1.00,-2.50){};
        \node[Noeud](11)at(11.00,-5.00){};
        \node[Noeud](13)at(13.00,-7.50){};
        \node[Noeud](3)at(3.00,0.00){};
        \node[Noeud](5)at(5.00,-2.50){};
        \node[Noeud](7)at(7.00,-7.50){};
        \node[Noeud](9)at(9.00,-10.00){};
        \draw[Arete](0)--(1);
        \draw[Arete](1)edge[]node[EtiqArete]{\begin{math}1\end{math}}(3);
        \draw[Arete](10)--(9);
        \draw[Arete](11)edge[]node[EtiqArete]{\begin{math}1\end{math}}(5);
        \draw[Arete](12)--(13);
        \draw[Arete](13)edge[]node[EtiqArete]{\begin{math}2\end{math}}(11);
        \draw[Arete](14)--(13);
        \draw[Arete](2)--(1);
        \draw[Arete](4)--(5);
        \draw[Arete](5)edge[]node[EtiqArete]{\begin{math}2\end{math}}(3);
        \draw[Arete](6)--(7);
        \draw[Arete](7)edge[]node[EtiqArete]{\begin{math}1\end{math}}(11);
        \draw[Arete](8)--(9);
        \draw[Arete](9)edge[]node[EtiqArete]{\begin{math}1\end{math}}(7);
        \node(r)at(3.00,2.50){};
        \draw[Arete](r)--(3);
    \end{tikzpicture}
    \end{split} \\
    +
    \begin{split}
    \begin{tikzpicture}[xscale=.22,yscale=.15]
        \node[Feuille](0)at(0.00,-5.00){};
        \node[Feuille](10)at(10.00,-10.00){};
        \node[Feuille](12)at(12.00,-10.00){};
        \node[Feuille](14)at(14.00,-10.00){};
        \node[Feuille](2)at(2.00,-5.00){};
        \node[Feuille](4)at(4.00,-5.00){};
        \node[Feuille](6)at(6.00,-12.50){};
        \node[Feuille](8)at(8.00,-12.50){};
        \node[Noeud](1)at(1.00,-2.50){};
        \node[Noeud](11)at(11.00,-5.00){};
        \node[Noeud](13)at(13.00,-7.50){};
        \node[Noeud](3)at(3.00,0.00){};
        \node[Noeud](5)at(5.00,-2.50){};
        \node[Noeud](7)at(7.00,-10.00){};
        \node[Noeud](9)at(9.00,-7.50){};
        \draw[Arete](0)--(1);
        \draw[Arete](1)edge[]node[EtiqArete]{\begin{math}1\end{math}}(3);
        \draw[Arete](10)--(9);
        \draw[Arete](11)edge[]node[EtiqArete]{\begin{math}1\end{math}}(5);
        \draw[Arete](12)--(13);
        \draw[Arete](13)edge[]node[EtiqArete]{\begin{math}2\end{math}}(11);
        \draw[Arete](14)--(13);
        \draw[Arete](2)--(1);
        \draw[Arete](4)--(5);
        \draw[Arete](5)edge[]node[EtiqArete]{\begin{math}2\end{math}}(3);
        \draw[Arete](6)--(7);
        \draw[Arete](7)edge[]node[EtiqArete]{\begin{math}1\end{math}}(9);
        \draw[Arete](8)--(7);
        \draw[Arete](9)edge[]node[EtiqArete]{\begin{math}1\end{math}}(11);
        \node(r)at(3.00,2.50){};
        \draw[Arete](r)--(3);
    \end{tikzpicture}
    \end{split}
    +
    \begin{split}
    \begin{tikzpicture}[xscale=.22,yscale=.15]
        \node[Feuille](0)at(0.00,-5.00){};
        \node[Feuille](10)at(10.00,-12.50){};
        \node[Feuille](12)at(12.00,-7.50){};
        \node[Feuille](14)at(14.00,-7.50){};
        \node[Feuille](2)at(2.00,-5.00){};
        \node[Feuille](4)at(4.00,-7.50){};
        \node[Feuille](6)at(6.00,-10.00){};
        \node[Feuille](8)at(8.00,-12.50){};
        \node[Noeud](1)at(1.00,-2.50){};
        \node[Noeud](11)at(11.00,-2.50){};
        \node[Noeud](13)at(13.00,-5.00){};
        \node[Noeud](3)at(3.00,0.00){};
        \node[Noeud](5)at(5.00,-5.00){};
        \node[Noeud](7)at(7.00,-7.50){};
        \node[Noeud](9)at(9.00,-10.00){};
        \draw[Arete](0)--(1);
        \draw[Arete](1)edge[]node[EtiqArete]{\begin{math}1\end{math}}(3);
        \draw[Arete](10)--(9);
        \draw[Arete](11)edge[]node[EtiqArete]{\begin{math}2\end{math}}(3);
        \draw[Arete](12)--(13);
        \draw[Arete](13)edge[]node[EtiqArete]{\begin{math}2\end{math}}(11);
        \draw[Arete](14)--(13);
        \draw[Arete](2)--(1);
        \draw[Arete](4)--(5);
        \draw[Arete](5)edge[]node[EtiqArete]{\begin{math}1\end{math}}(11);
        \draw[Arete](6)--(7);
        \draw[Arete](7)edge[]node[EtiqArete]{\begin{math}1\end{math}}(5);
        \draw[Arete](8)--(9);
        \draw[Arete](9)edge[]node[EtiqArete]{\begin{math}1\end{math}}(7);
        \node(r)at(3.00,2.50){};
        \draw[Arete](r)--(3);
    \end{tikzpicture}
    \end{split}
    +
    \begin{split}
    \begin{tikzpicture}[xscale=.22,yscale=.15]
        \node[Feuille](0)at(0.00,-5.00){};
        \node[Feuille](10)at(10.00,-10.00){};
        \node[Feuille](12)at(12.00,-7.50){};
        \node[Feuille](14)at(14.00,-7.50){};
        \node[Feuille](2)at(2.00,-5.00){};
        \node[Feuille](4)at(4.00,-7.50){};
        \node[Feuille](6)at(6.00,-12.50){};
        \node[Feuille](8)at(8.00,-12.50){};
        \node[Noeud](1)at(1.00,-2.50){};
        \node[Noeud](11)at(11.00,-2.50){};
        \node[Noeud](13)at(13.00,-5.00){};
        \node[Noeud](3)at(3.00,0.00){};
        \node[Noeud](5)at(5.00,-5.00){};
        \node[Noeud](7)at(7.00,-10.00){};
        \node[Noeud](9)at(9.00,-7.50){};
        \draw[Arete](0)--(1);
        \draw[Arete](1)edge[]node[EtiqArete]{\begin{math}1\end{math}}(3);
        \draw[Arete](10)--(9);
        \draw[Arete](11)edge[]node[EtiqArete]{\begin{math}2\end{math}}(3);
        \draw[Arete](12)--(13);
        \draw[Arete](13)edge[]node[EtiqArete]{\begin{math}2\end{math}}(11);
        \draw[Arete](14)--(13);
        \draw[Arete](2)--(1);
        \draw[Arete](4)--(5);
        \draw[Arete](5)edge[]node[EtiqArete]{\begin{math}1\end{math}}(11);
        \draw[Arete](6)--(7);
        \draw[Arete](7)edge[]node[EtiqArete]{\begin{math}1\end{math}}(9);
        \draw[Arete](8)--(7);
        \draw[Arete](9)edge[]node[EtiqArete]{\begin{math}1\end{math}}(5);
        \node(r)at(3.00,2.50){};
        \draw[Arete](r)--(3);
    \end{tikzpicture}
    \end{split}
    +
    \begin{split}
    \begin{tikzpicture}[xscale=.22,yscale=.15]
        \node[Feuille](0)at(0.00,-5.00){};
        \node[Feuille](10)at(10.00,-7.50){};
        \node[Feuille](12)at(12.00,-7.50){};
        \node[Feuille](14)at(14.00,-7.50){};
        \node[Feuille](2)at(2.00,-5.00){};
        \node[Feuille](4)at(4.00,-10.00){};
        \node[Feuille](6)at(6.00,-12.50){};
        \node[Feuille](8)at(8.00,-12.50){};
        \node[Noeud](1)at(1.00,-2.50){};
        \node[Noeud](11)at(11.00,-2.50){};
        \node[Noeud](13)at(13.00,-5.00){};
        \node[Noeud](3)at(3.00,0.00){};
        \node[Noeud](5)at(5.00,-7.50){};
        \node[Noeud](7)at(7.00,-10.00){};
        \node[Noeud](9)at(9.00,-5.00){};
        \draw[Arete](0)--(1);
        \draw[Arete](1)edge[]node[EtiqArete]{\begin{math}1\end{math}}(3);
        \draw[Arete](10)--(9);
        \draw[Arete](11)edge[]node[EtiqArete]{\begin{math}2\end{math}}(3);
        \draw[Arete](12)--(13);
        \draw[Arete](13)edge[]node[EtiqArete]{\begin{math}2\end{math}}(11);
        \draw[Arete](14)--(13);
        \draw[Arete](2)--(1);
        \draw[Arete](4)--(5);
        \draw[Arete](5)edge[]node[EtiqArete]{\begin{math}3\end{math}}(9);
        \draw[Arete](6)--(7);
        \draw[Arete](7)edge[]node[EtiqArete]{\begin{math}1\end{math}}(5);
        \draw[Arete](8)--(7);
        \draw[Arete](9)edge[]node[EtiqArete]{\begin{math}1\end{math}}(11);
        \node(r)at(3.00,2.50){};
        \draw[Arete](r)--(3);
    \end{tikzpicture}
    \end{split}\,,
\end{multline}
and
\begin{multline}
    \begin{split}
    \begin{tikzpicture}[xscale=.25,yscale=.2]
        \node[Feuille](0)at(0.00,-4.50){};
        \node[Feuille](2)at(2.00,-4.50){};
        \node[Feuille](4)at(4.00,-4.50){};
        \node[Feuille](6)at(6.00,-6.75){};
        \node[Feuille](8)at(8.00,-6.75){};
        \node[Noeud](1)at(1.00,-2.25){};
        \node[Noeud](3)at(3.00,0.00){};
        \node[Noeud](5)at(5.00,-2.25){};
        \node[Noeud](7)at(7.00,-4.50){};
        \draw[Arete](0)--(1);
        \draw[Arete](1)edge[]node[EtiqArete]{\begin{math}1\end{math}}(3);
        \draw[Arete](2)--(1);
        \draw[Arete](4)--(5);
        \draw[Arete](5)edge[]node[EtiqArete]{\begin{math}3\end{math}}(3);
        \draw[Arete](6)--(7);
        \draw[Arete](7)edge[]node[EtiqArete]{\begin{math}1\end{math}}(5);
        \draw[Arete](8)--(7);
        \node(r)at(3.00,1.7){};
        \draw[Arete](r)--(3);
    \end{tikzpicture}
    \end{split}
    \DDendr_2
    \begin{split}
    \begin{tikzpicture}[xscale=.25,yscale=.2]
        \node[Feuille](0)at(0.00,-4.67){};
        \node[Feuille](2)at(2.00,-4.67){};
        \node[Feuille](4)at(4.00,-4.67){};
        \node[Feuille](6)at(6.00,-4.67){};
        \node[Noeud](1)at(1.00,-2.33){};
        \node[Noeud](3)at(3.00,0.00){};
        \node[Noeud](5)at(5.00,-2.33){};
        \draw[Arete](0)--(1);
        \draw[Arete](1)edge[]node[EtiqArete]{\begin{math}1\end{math}}(3);
        \draw[Arete](2)--(1);
        \draw[Arete](4)--(5);
        \draw[Arete](5)edge[]node[EtiqArete]{\begin{math}2\end{math}}(3);
        \draw[Arete](6)--(5);
        \node(r)at(3.00,1.7){};
        \draw[Arete](r)--(3);
    \end{tikzpicture}
    \end{split}
    =
    \begin{split}
    \begin{tikzpicture}[xscale=.22,yscale=.15]
        \node[Feuille](0)at(0.00,-7.50){};
        \node[Feuille](10)at(10.00,-12.50){};
        \node[Feuille](12)at(12.00,-5.00){};
        \node[Feuille](14)at(14.00,-5.00){};
        \node[Feuille](2)at(2.00,-7.50){};
        \node[Feuille](4)at(4.00,-7.50){};
        \node[Feuille](6)at(6.00,-10.00){};
        \node[Feuille](8)at(8.00,-12.50){};
        \node[Noeud](1)at(1.00,-5.00){};
        \node[Noeud](11)at(11.00,0.00){};
        \node[Noeud](13)at(13.00,-2.50){};
        \node[Noeud](3)at(3.00,-2.50){};
        \node[Noeud](5)at(5.00,-5.00){};
        \node[Noeud](7)at(7.00,-7.50){};
        \node[Noeud](9)at(9.00,-10.00){};
        \draw[Arete](0)--(1);
        \draw[Arete](1)edge[]node[EtiqArete]{\begin{math}1\end{math}}(3);
        \draw[Arete](10)--(9);
        \draw[Arete](12)--(13);
        \draw[Arete](13)edge[]node[EtiqArete]{\begin{math}2\end{math}}(11);
        \draw[Arete](14)--(13);
        \draw[Arete](2)--(1);
        \draw[Arete](3)edge[]node[EtiqArete]{\begin{math}1\end{math}}(11);
        \draw[Arete](4)--(5);
        \draw[Arete](5)edge[]node[EtiqArete]{\begin{math}1\end{math}}(3);
        \draw[Arete](6)--(7);
        \draw[Arete](7)edge[]node[EtiqArete]{\begin{math}1\end{math}}(5);
        \draw[Arete](8)--(9);
        \draw[Arete](9)edge[]node[EtiqArete]{\begin{math}1\end{math}}(7);
        \node(r)at(11.00,2.50){};
        \draw[Arete](r)--(11);
    \end{tikzpicture}
    \end{split}
    +
    \begin{split}
    \begin{tikzpicture}[xscale=.22,yscale=.15]
        \node[Feuille](0)at(0.00,-7.50){};
        \node[Feuille](10)at(10.00,-10.00){};
        \node[Feuille](12)at(12.00,-5.00){};
        \node[Feuille](14)at(14.00,-5.00){};
        \node[Feuille](2)at(2.00,-7.50){};
        \node[Feuille](4)at(4.00,-7.50){};
        \node[Feuille](6)at(6.00,-12.50){};
        \node[Feuille](8)at(8.00,-12.50){};
        \node[Noeud](1)at(1.00,-5.00){};
        \node[Noeud](11)at(11.00,0.00){};
        \node[Noeud](13)at(13.00,-2.50){};
        \node[Noeud](3)at(3.00,-2.50){};
        \node[Noeud](5)at(5.00,-5.00){};
        \node[Noeud](7)at(7.00,-10.00){};
        \node[Noeud](9)at(9.00,-7.50){};
        \draw[Arete](0)--(1);
        \draw[Arete](1)edge[]node[EtiqArete]{\begin{math}1\end{math}}(3);
        \draw[Arete](10)--(9);
        \draw[Arete](12)--(13);
        \draw[Arete](13)edge[]node[EtiqArete]{\begin{math}2\end{math}}(11);
        \draw[Arete](14)--(13);
        \draw[Arete](2)--(1);
        \draw[Arete](3)edge[]node[EtiqArete]{\begin{math}1\end{math}}(11);
        \draw[Arete](4)--(5);
        \draw[Arete](5)edge[]node[EtiqArete]{\begin{math}1\end{math}}(3);
        \draw[Arete](6)--(7);
        \draw[Arete](7)edge[]node[EtiqArete]{\begin{math}1\end{math}}(9);
        \draw[Arete](8)--(7);
        \draw[Arete](9)edge[]node[EtiqArete]{\begin{math}1\end{math}}(5);
        \node(r)at(11.00,2.50){};
        \draw[Arete](r)--(11);
    \end{tikzpicture}
    \end{split} \\
    +
    \begin{split}
    \begin{tikzpicture}[xscale=.22,yscale=.15]
        \node[Feuille](0)at(0.00,-7.50){};
        \node[Feuille](10)at(10.00,-7.50){};
        \node[Feuille](12)at(12.00,-5.00){};
        \node[Feuille](14)at(14.00,-5.00){};
        \node[Feuille](2)at(2.00,-7.50){};
        \node[Feuille](4)at(4.00,-10.00){};
        \node[Feuille](6)at(6.00,-12.50){};
        \node[Feuille](8)at(8.00,-12.50){};
        \node[Noeud](1)at(1.00,-5.00){};
        \node[Noeud](11)at(11.00,0.00){};
        \node[Noeud](13)at(13.00,-2.50){};
        \node[Noeud](3)at(3.00,-2.50){};
        \node[Noeud](5)at(5.00,-7.50){};
        \node[Noeud](7)at(7.00,-10.00){};
        \node[Noeud](9)at(9.00,-5.00){};
        \draw[Arete](0)--(1);
        \draw[Arete](1)edge[]node[EtiqArete]{\begin{math}1\end{math}}(3);
        \draw[Arete](10)--(9);
        \draw[Arete](12)--(13);
        \draw[Arete](13)edge[]node[EtiqArete]{\begin{math}2\end{math}}(11);
        \draw[Arete](14)--(13);
        \draw[Arete](2)--(1);
        \draw[Arete](3)edge[]node[EtiqArete]{\begin{math}1\end{math}}(11);
        \draw[Arete](4)--(5);
        \draw[Arete](5)edge[]node[EtiqArete]{\begin{math}3\end{math}}(9);
        \draw[Arete](6)--(7);
        \draw[Arete](7)edge[]node[EtiqArete]{\begin{math}1\end{math}}(5);
        \draw[Arete](8)--(7);
        \draw[Arete](9)edge[]node[EtiqArete]{\begin{math}1\end{math}}(3);
        \node(r)at(11.00,2.50){};
        \draw[Arete](r)--(11);
    \end{tikzpicture}
    \end{split}
    +
    \begin{split}
    \begin{tikzpicture}[xscale=.22,yscale=.15]
        \node[Feuille](0)at(0.00,-10.00){};
        \node[Feuille](10)at(10.00,-5.00){};
        \node[Feuille](12)at(12.00,-5.00){};
        \node[Feuille](14)at(14.00,-5.00){};
        \node[Feuille](2)at(2.00,-10.00){};
        \node[Feuille](4)at(4.00,-10.00){};
        \node[Feuille](6)at(6.00,-12.50){};
        \node[Feuille](8)at(8.00,-12.50){};
        \node[Noeud](1)at(1.00,-7.50){};
        \node[Noeud](11)at(11.00,0.00){};
        \node[Noeud](13)at(13.00,-2.50){};
        \node[Noeud](3)at(3.00,-5.00){};
        \node[Noeud](5)at(5.00,-7.50){};
        \node[Noeud](7)at(7.00,-10.00){};
        \node[Noeud](9)at(9.00,-2.50){};
        \draw[Arete](0)--(1);
        \draw[Arete](1)edge[]node[EtiqArete]{\begin{math}1\end{math}}(3);
        \draw[Arete](10)--(9);
        \draw[Arete](12)--(13);
        \draw[Arete](13)edge[]node[EtiqArete]{\begin{math}2\end{math}}(11);
        \draw[Arete](14)--(13);
        \draw[Arete](2)--(1);
        \draw[Arete](3)edge[]node[EtiqArete]{\begin{math}2\end{math}}(9);
        \draw[Arete](4)--(5);
        \draw[Arete](5)edge[]node[EtiqArete]{\begin{math}3\end{math}}(3);
        \draw[Arete](6)--(7);
        \draw[Arete](7)edge[]node[EtiqArete]{\begin{math}1\end{math}}(5);
        \draw[Arete](8)--(7);
        \draw[Arete](9)edge[]node[EtiqArete]{\begin{math}1\end{math}}(11);
        \node(r)at(11.00,2.50){};
        \draw[Arete](r)--(11);
    \end{tikzpicture}
    \end{split}\,.
\end{multline}
\medskip

\begin{Lemme} \label{lem:produit_gamma_dendriforme}
    For any integer $\gamma \geq 0$, the vector space
    $\AlgLibre_{\Dendr_\gamma}$ of $\gamma$-edge valued binary trees
    endowed with the operations $\GDendr_a$, $\DDendr_a$, $a \in [\gamma]$,
    is a $\gamma$-polydendriform algebra.
\end{Lemme}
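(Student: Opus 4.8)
By Theorem~\ref{thm:autre_presentation_dendr_gamma}, and more conveniently through its compact reformulation, a vector space equipped with operations $\GDendr_a, \DDendr_a$, $a \in [\gamma]$, is a $\gamma$-polydendriform algebra exactly when these operations satisfy the three families of relations
\eqref{equ:relation_dendr_gamma_1_concise}, \eqref{equ:relation_dendr_gamma_2_concise},
and~\eqref{equ:relation_dendr_gamma_3_concise}. Spelled out as identities on $\AlgLibre_{\Dendr_\gamma}$, for all $\gamma$-edge valued binary trees $\Tfr, \Sfr, \Rfr$ (each having at least one internal node) and all $a, a' \in [\gamma]$, the plan is to prove
\begin{subequations}
\begin{equation}
    (\Tfr \DDendr_{a'} \Sfr) \GDendr_a \Rfr
        = \Tfr \DDendr_{a'} (\Sfr \GDendr_a \Rfr),
\end{equation}
\begin{equation}
    (\Tfr \GDendr_{a'} \Sfr) \GDendr_a \Rfr
        = \Tfr \GDendr_{a \Min a'} (\Sfr \GDendr_a \Rfr)
        + \Tfr \GDendr_{a \Min a'} (\Sfr \DDendr_{a'} \Rfr),
\end{equation}
\begin{equation}
    (\Tfr \GDendr_{a'} \Sfr) \DDendr_{a \Min a'} \Rfr
        + (\Tfr \DDendr_a \Sfr) \DDendr_{a \Min a'} \Rfr
        = \Tfr \DDendr_a (\Sfr \DDendr_{a'} \Rfr).
\end{equation}
\end{subequations}
The whole argument rests on one structural feature of the recursive rules: acting on a nonleaf left operand, $\GDendr_a$ only modifies the right subtree and replaces the right edge label $y$ by $a \Min y$; dually, acting on a nonleaf right operand, $\DDendr_a$ only modifies the left subtree and its edge label. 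Thus $\GDendr$ and $\DDendr$ work on disjoint parts of a tree.

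For the first identity I would decompose $\Sfr$ at its root into a left subtree $\Sfr_1$ (edge label $p$) and a right subtree $\Sfr_2$ (edge label $q$). Since $\DDendr_{a'}$ touches only the left part of $\Sfr$ while $\GDendr_a$ touches only its right part, expanding both sides with the recursive rules produces, with no induction, the very same sum of four $\gamma$-edge valued binary trees, all with top left edge $a' \Min p$ and top right edge $a \Min q$, whose left slot ranges over $\Tfr \DDendr_{a'} \Sfr_1$ and $\Tfr \GDendr_p \Sfr_1$ and whose right slot ranges over $\Sfr_2 \GDendr_a \Rfr$ and $\Sfr_2 \DDendr_q \Rfr$. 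The boundary cases $\Sfr_1 = \Feuille$ or $\Sfr_2 = \Feuille$ are absorbed by the leaf rules, so this identity holds term by term.

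The remaining two identities are mirror images and I would prove them together by induction on the total number $N$ of internal nodes of $\Tfr, \Sfr, \Rfr$. For the second, decompose the left operand $\Tfr$ into $\Tfr_1$ (edge $r$) and $\Tfr_2$ (edge $s$) and expand; using $a \Min (a' \Min s) = (a \Min a') \Min s$ to see that every summand carries the same top right edge $(a \Min a') \Min s$, the claim reduces to the equality of the four right-subtree contributions
\begin{equation}
    (\Tfr_2 \GDendr_{a'} \Sfr) \GDendr_a \Rfr
    + (\Tfr_2 \GDendr_{a'} \Sfr) \DDendr_{a' \Min s} \Rfr
    + (\Tfr_2 \DDendr_s \Sfr) \GDendr_a \Rfr
    + (\Tfr_2 \DDendr_s \Sfr) \DDendr_{a' \Min s} \Rfr
\end{equation}
with the four contributions arising on the right-hand side. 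When $\Tfr_2 = \Feuille$ (so $s = \infty$) the leaf rules collapse both to $\Sfr \GDendr_a \Rfr + \Sfr \DDendr_{a'} \Rfr$; otherwise $\Tfr_2$ is strictly smaller and I invoke the induction hypothesis on $(\Tfr_2, \Sfr, \Rfr)$: the first term splits by the second identity, the third collapses by the first identity (which holds unconditionally), and the sum of the remaining two is resolved by the third identity with inner indices $a'$ and $s$ and outer index $a' \Min s$. The third identity is settled symmetrically by decomposing $\Rfr$ and appealing to the same three identities on strictly smaller triples.

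The main obstacle is that this must be run as a single \emph{simultaneous} induction: the inductive step for the second identity calls on the third at a smaller value of $N$ and conversely, so neither can be isolated. The two accompanying technical points, both routine once set up, are the systematic bookkeeping of edge labels — where the associativity and commutativity of $\Min$ together with the convention $a \Min \infty = a$ ensure that corresponding summands bear identical labels — and the verification that all leaf boundary cases stay consistent with the $\infty$ convention.
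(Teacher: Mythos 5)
Your proposal matches the paper's proof in both structure and detail: Relation~\eqref{equ:relation_dendr_gamma_1_concise} is verified by a direct root decomposition of the middle tree with no induction, while Relations~\eqref{equ:relation_dendr_gamma_2_concise} and~\eqref{equ:relation_dendr_gamma_3_concise} are handled by the same simultaneous induction on the total number of internal nodes, with the identical eight-term expansion grouped into three clusters resolved respectively by the inductive hypothesis for the second relation, the inductive hypothesis for the third relation (with inner indices $a'$ and the right edge label), and the unconditionally established first relation. Your explicit treatment of the leaf case via the $\infty$ convention is slightly more careful than the paper's base case, but the argument is the same.
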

\begin{proof}
    We have to check that the operations $\GDendr_a$, $\DDendr_a$,
    $a \in [\gamma]$, of $\AlgLibre_{\Dendr_\gamma}$ satisfy
    Relations~\eqref{equ:relation_dendr_gamma_1_concise},
    \eqref{equ:relation_dendr_gamma_2_concise},
    and~\eqref{equ:relation_dendr_gamma_3_concise} of
    $\gamma$-polydendriform algebras. Let $\Rfr$, $\Sfr$, and $\Tfr$ be
    three $\gamma$-edge valued binary trees and $a, a' \in [\gamma]$.
    \smallskip

    Denote by $\Sfr_1$ (resp. $\Sfr_2$) the left subtree (resp. right
    subtree) of $\Sfr$ and by $x$ (resp. $y$) the label of the left
    (resp. right) edge incident to the root of $\Sfr$. We have
    \begin{multline}
        (\Rfr \DDendr_{a'} \Sfr) \GDendr_a \Tfr  =
        \left(\Rfr \DDendr_{a'} \ArbreBinValue{x}{y}{\Sfr_1}{\Sfr_2}\right)
        \GDendr_a \Tfr
        = \left(\ArbreBinValue{z}{y}{\Rfr \DDendr_{a'} \Sfr_1}{\Sfr_2}
        +
        \ArbreBinValue{z}{y}{\Rfr \GDendr_x \Sfr_1}{\Sfr_2}\right)
        \GDendr_a \Tfr
        \displaybreak[0]
        \\
        = \ArbreBinValue{z}{t}{\Rfr \DDendr_{a'} \Sfr_1}
                {\Sfr_2 \GDendr_a \Tfr}
        +
        \ArbreBinValue{z}{t}{\Rfr \DDendr_{a'} \Sfr_1}
                {\Sfr_2 \DDendr_y \Tfr}
        +
        \ArbreBinValue{z}{t}{\Rfr \GDendr_x \Sfr_1}
                {\Sfr_2 \GDendr_a \Tfr}
        +
        \ArbreBinValue{z}{t}{\Rfr \GDendr_x \Sfr_1}
                {\Sfr_2 \DDendr_y \Tfr}
        \displaybreak[0]
        \\
        = \Rfr \DDendr_{a'}
        \left(\ArbreBinValue{x}{t}{\Sfr_1}{\Sfr_2 \GDendr_a \Tfr}
        +
        \ArbreBinValue{x}{t}{\Sfr_1}{\Sfr_2 \DDendr_y \Tfr}\right)
        = \Rfr \DDendr_{a'}
        \left(\ArbreBinValue{x}{y}{\Sfr_1}{\Sfr_2}
        \GDendr_a \Tfr \right) =
        \Rfr \DDendr_{a'} (\Sfr \GDendr_a \Tfr),
    \end{multline}
    where $z := a' \Min x$ and $t := a \Min y$.
    This shows that~\eqref{equ:relation_dendr_gamma_1_concise}
    is satisfied in $\AlgLibre_{\Dendr_\gamma}$.
    \medskip

    We now prove that
    Relations~\eqref{equ:relation_dendr_gamma_2_concise}
    and~\eqref{equ:relation_dendr_gamma_3_concise} hold
    by induction on the sum of the number of internal nodes of $\Rfr$, $\Sfr$,
    and $\Tfr$. Base case holds when all these trees have exactly one
    internal node, and since
    \begin{multline}
        \left(\Noeud \GDendr_{a'} \Noeud\right) \GDendr_a \Noeud
        - \Noeud \GDendr_{a \Min a'} \left( \Noeud \GDendr_a \Noeud \right)
        - \Noeud \GDendr_{a \Min a'} \left( \Noeud \DDendr_{a'} \Noeud \right)
        \displaybreak[0]
        \\
        =
        \begin{split}
        \begin{tikzpicture}[xscale=.3,yscale=.25]
            \node[Feuille](0)at(0.00,-1.67){};
            \node[Feuille](2)at(2.00,-3.33){};
            \node[Feuille](4)at(4.00,-3.33){};
            \node[Noeud](1)at(1.00,0.00){};
            \node[Noeud](3)at(3.00,-1.67){};
            \draw[Arete](0)--(1);
            \draw[Arete](2)--(3);
            \draw[Arete](3)edge[]node[EtiqArete]{\begin{math}a'\end{math}}(1);
            \draw[Arete](4)--(3);
            \node(r)at(1.00,1.67){};
            \draw[Arete](r)--(1);
        \end{tikzpicture}
        \end{split}
        \GDendr_a \Noeud -
        \Noeud \GDendr_{a \Min a'}
        \begin{split}
        \begin{tikzpicture}[xscale=.3,yscale=.25]
            \node[Feuille](0)at(0.00,-1.67){};
            \node[Feuille](2)at(2.00,-3.33){};
            \node[Feuille](4)at(4.00,-3.33){};
            \node[Noeud](1)at(1.00,0.00){};
            \node[Noeud](3)at(3.00,-1.67){};
            \draw[Arete](0)--(1);
            \draw[Arete](2)--(3);
            \draw[Arete](3)edge[]node[EtiqArete]{\begin{math}a\end{math}}(1);
            \draw[Arete](4)--(3);
            \node(r)at(1.00,1.67){};
            \draw[Arete](r)--(1);
        \end{tikzpicture}
        \end{split}
        - \Noeud \GDendr_{a \Min a'}
        \begin{split}
        \begin{tikzpicture}[xscale=.3,yscale=.25]
            \node[Feuille](0)at(0.00,-3.33){};
            \node[Feuille](2)at(2.00,-3.33){};
            \node[Feuille](4)at(4.00,-1.67){};
            \node[Noeud](1)at(1.00,-1.67){};
            \node[Noeud](3)at(3.00,0.00){};
            \draw[Arete](0)--(1);
            \draw[Arete](1)edge[]node[EtiqArete]{\begin{math}a'\end{math}}(3);
            \draw[Arete](2)--(1);
            \draw[Arete](4)--(3);
            \node(r)at(3.00,1.67){};
            \draw[Arete](r)--(3);
        \end{tikzpicture}
        \end{split}
        \\
        =
        \begin{split}
        \begin{tikzpicture}[xscale=.3,yscale=.25]
            \node[Feuille](0)at(0.00,-1.75){};
            \node[Feuille](2)at(2.00,-3.50){};
            \node[Feuille](4)at(4.00,-5.25){};
            \node[Feuille](6)at(6.00,-5.25){};
            \node[Noeud](1)at(1.00,0.00){};
            \node[Noeud](3)at(3.00,-1.75){};
            \node[Noeud](5)at(5.00,-3.50){};
            \draw[Arete](0)--(1);
            \draw[Arete](2)--(3);
            \draw[Arete](3)edge[]node[EtiqArete]{\begin{math}z\end{math}}(1);
            \draw[Arete](4)--(5);
            \draw[Arete](5)edge[]node[EtiqArete]{\begin{math}a\end{math}}(3);
            \draw[Arete](6)--(5);
            \node(r)at(1.00,1.75){};
            \draw[Arete](r)--(1);
        \end{tikzpicture}
        \end{split}
        +
        \begin{split}
        \begin{tikzpicture}[xscale=.3,yscale=.25]
            \node[Feuille](0)at(0.00,-1.75){};
            \node[Feuille](2)at(2.00,-5.25){};
            \node[Feuille](4)at(4.00,-5.25){};
            \node[Feuille](6)at(6.00,-3.50){};
            \node[Noeud](1)at(1.00,0.00){};
            \node[Noeud](3)at(3.00,-3.50){};
            \node[Noeud](5)at(5.00,-1.75){};
            \draw[Arete](0)--(1);
            \draw[Arete](2)--(3);
            \draw[Arete](3)edge[]node[EtiqArete]{\begin{math}a'\end{math}}(5);
            \draw[Arete](4)--(3);
            \draw[Arete](5)edge[]node[EtiqArete]{\begin{math}z\end{math}}(1);
            \draw[Arete](6)--(5);
            \node(r)at(1.00,1.75){};
            \draw[Arete](r)--(1);
        \end{tikzpicture}
        \end{split}
        -
        \begin{split}
        \begin{tikzpicture}[xscale=.3,yscale=.25]
            \node[Feuille](0)at(0.00,-1.75){};
            \node[Feuille](2)at(2.00,-3.50){};
            \node[Feuille](4)at(4.00,-5.25){};
            \node[Feuille](6)at(6.00,-5.25){};
            \node[Noeud](1)at(1.00,0.00){};
            \node[Noeud](3)at(3.00,-1.75){};
            \node[Noeud](5)at(5.00,-3.50){};
            \draw[Arete](0)--(1);
            \draw[Arete](2)--(3);
            \draw[Arete](3)edge[]node[EtiqArete]{\begin{math}z\end{math}}(1);
            \draw[Arete](4)--(5);
            \draw[Arete](5)edge[]node[EtiqArete]{\begin{math}a\end{math}}(3);
            \draw[Arete](6)--(5);
            \node(r)at(1.00,1.75){};
            \draw[Arete](r)--(1);
        \end{tikzpicture}
        \end{split}
        -
        \begin{split}
        \begin{tikzpicture}[xscale=.3,yscale=.25]
            \node[Feuille](0)at(0.00,-1.75){};
            \node[Feuille](2)at(2.00,-5.25){};
            \node[Feuille](4)at(4.00,-5.25){};
            \node[Feuille](6)at(6.00,-3.50){};
            \node[Noeud](1)at(1.00,0.00){};
            \node[Noeud](3)at(3.00,-3.50){};
            \node[Noeud](5)at(5.00,-1.75){};
            \draw[Arete](0)--(1);
            \draw[Arete](2)--(3);
            \draw[Arete](3)edge[]node[EtiqArete]{\begin{math}a'\end{math}}(5);
            \draw[Arete](4)--(3);
            \draw[Arete](5)edge[]node[EtiqArete]{\begin{math}z\end{math}}(1);
            \draw[Arete](6)--(5);
            \node(r)at(1.00,1.75){};
            \draw[Arete](r)--(1);
        \end{tikzpicture}
        \end{split}
        \\ = 0,
    \end{multline}
    where $z := a \Min a'$,
    \eqref{equ:relation_dendr_gamma_2_concise} holds on trees
    with exactly one internal node. For the same arguments, we can show
    that~\eqref{equ:relation_dendr_gamma_3_concise} holds on
    trees with exactly one internal node. Denote now by $\Rfr_1$ (resp.
    $\Rfr_2$) the left subtree (resp. right subtree) of $\Rfr$ and by $x$
    (resp. $y$) the label of the left (resp. right) edge incident to the
    root of $\Rfr$. We have
    \begin{multline} \label{equ:produit_gamma_dendriforme_expr}
        (\Rfr \GDendr_{a'} \Sfr) \GDendr_a \Tfr
        - \Rfr \GDendr_{a \Min a'} (\Sfr \GDendr_a \Tfr)
        - \Rfr \GDendr_{a \Min a'} (\Sfr \DDendr_{a'} \Tfr)
        \\[1em]
        =
        \left(\ArbreBinValue{x}{y}{\Rfr_1}{\Rfr_2}
        \GDendr_{a'} \Sfr \right) \GDendr_a \Tfr
        -
        \ArbreBinValue{x}{y}{\Rfr_1}{\Rfr_2}
        \GDendr_{a \Min a'} (\Sfr \GDendr_a \Tfr)
        -
        \ArbreBinValue{x}{y}{\Rfr_1}{\Rfr_2}
        \GDendr_{a \Min a'} (\Sfr \DDendr_{a'} \Tfr)
        \displaybreak[0]
        \\
        =
        \left(\ArbreBinValue{x}{z}{\Rfr_1}{\Rfr_2 \GDendr_{a'} \Sfr}
        +
        \ArbreBinValue{x}{z}{\Rfr_1}{\Rfr_2 \DDendr_y \Sfr}
        \right) \GDendr_a \Tfr \\
        -
        \ArbreBinValue{x}{y}{\Rfr_1}{\Rfr_2}
        \GDendr_{a \Min a'} (\Sfr \GDendr_a \Tfr)
        -
        \ArbreBinValue{x}{y}{\Rfr_1}{\Rfr_2}
        \GDendr_{a \Min a'} (\Sfr \DDendr_{a'} \Tfr)
        \displaybreak[0]
        \\
        =
        \ArbreBinValue{x}{t}{\Rfr_1}{(\Rfr_2 \GDendr_{a'} \Sfr) \GDendr_a \Tfr}
        +
        \ArbreBinValue{x}{t}{\Rfr_1}{(\Rfr_2 \GDendr_{a'} \Sfr) \DDendr_z \Tfr}
        +
        \ArbreBinValue{x}{t}{\Rfr_1}{(\Rfr_2 \DDendr_y \Sfr) \GDendr_a \Tfr}
        +
        \ArbreBinValue{x}{t}{\Rfr_1}{(\Rfr_2 \DDendr_y \Sfr) \DDendr_z \Tfr} \\
        -
        \ArbreBinValue{x}{t}{\Rfr_1}{\Rfr_2 \GDendr_u (\Sfr \GDendr_a \Tfr)}
        -
        \ArbreBinValue{x}{t}{\Rfr_1}{\Rfr_2 \DDendr_y (\Sfr \GDendr_a \Tfr)}
        -
        \ArbreBinValue{x}{t}{\Rfr_1}{\Rfr_2 \GDendr_u (\Sfr \DDendr_{a'} \Tfr)}
        -
        \ArbreBinValue{x}{t}{\Rfr_1}{\Rfr_2 \DDendr_y (\Sfr \DDendr_{a'} \Tfr)}\,,
    \end{multline}
    where $z := y \Min a'$, $t := z \Min a = y \Min a' \Min a$, and
    $u := a \Min a'$. Now, by induction hypothesis,
    Relation~\eqref{equ:relation_dendr_gamma_2_concise} holds
    on $\Rfr_2$, $\Sfr$, and $\Tfr$. Hence, the sum of the first, fifth,
    and seventh terms of~\eqref{equ:produit_gamma_dendriforme_expr} is
    zero. Again by induction hypothesis,
    Relation~\eqref{equ:relation_dendr_gamma_3_concise} holds
    on $\Rfr_2$, $\Sfr$, and $\Tfr$. Thus, the sum of the second, fourth,
    and last terms of~\eqref{equ:produit_gamma_dendriforme_expr} is zero.
    Finally, by what we just have proven in the first part of this proof,
    the sum of the third and sixth terms
    of~\eqref{equ:relation_dendr_gamma_3_concise} is zero.
    Therefore, \eqref{equ:produit_gamma_dendriforme_expr} is zero
    and~\eqref{equ:relation_dendr_gamma_2_concise}
    is satisfied in $\AlgLibre_{\Dendr_\gamma}$.
    \smallskip

    Finally, for the same arguments, we can show
    that~\eqref{equ:relation_dendr_gamma_3_concise}
    is satisfied in $\AlgLibre_{\Dendr_\gamma}$, implying the statement
    of the lemma.
\end{proof}
\medskip

\begin{Lemme} \label{lem:produit_gamma_engendre_dendriforme}
    For any integer $\gamma \geq 0$, the $\gamma$-pluriassociative
    algebra $\AlgLibre_{\Dendr_\gamma}$ of $\gamma$-edge valued binary
    trees endowed with the operations $\GDendr_a$, $\DDendr_a$,
    $a \in [\gamma]$, is generated by
    \begin{equation}
        \Noeud\,.
    \end{equation}
\end{Lemme}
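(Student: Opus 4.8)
The plan is to argue by induction on the number of internal nodes, proving that every $\gamma$-edge valued binary tree lies in the $\gamma$-polydendriform subalgebra $\Gca$ of $\AlgLibre_{\Dendr_\gamma}$ generated by the unique tree $c$ with a single internal node. The base case $n = 1$ is trivial, since the only such tree is $c$ itself. For the inductive step, I would decompose a tree $\Tfr$ with $n \geq 2$ internal nodes at its root: write $\Sfr_1$ and $\Sfr_2$ for its left and right subtrees and $x, y \in [\gamma] \cup \{\infty\}$ for the labels of the two edges incident to the root, with the convention $x = \infty$ (resp. $y = \infty$) when $\Sfr_1$ (resp. $\Sfr_2$) is a leaf. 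Because $n \geq 2$, at least one of $\Sfr_1, \Sfr_2$ is a genuine tree with strictly fewer internal nodes than $\Tfr$, hence lies in $\Gca$ by the induction hypothesis.

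The core of the proof is the single grafting identity, valid when both $\Sfr_1$ and $\Sfr_2$ are genuine trees,
\begin{equation}
    \Tfr = \left(\Sfr_1 \DDendr_x c\right) \GDendr_y \Sfr_2,
\end{equation}
which I would verify by unfolding the recursive definitions of $\DDendr_a$ and $\GDendr_a$. Since $c$ has both subtrees equal to leaves, the defining rule for $\DDendr_x$ reduces $\Sfr_1 \DDendr_x c$ to a single tree: the summand $\Sfr_1 \DDendr_x \Feuille$ vanishes by the convention $\Sfr \DDendr_a \Feuille = 0$, while the surviving summand $\Sfr_1 \GDendr_\infty \Feuille = \Sfr_1$ produces the tree whose root carries $\Sfr_1$ on the left through an edge labeled $x$ and a leaf on the right. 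Grafting $\Sfr_2$ by $\GDendr_y$ then again collapses to one term, since this intermediate tree has a leaf as right subtree: the term $\Feuille \GDendr_y \Sfr_2 = 0$ disappears and $\Feuille \DDendr_\infty \Sfr_2 = \Sfr_2$ remains, yielding the root with $\Sfr_1$ on the left (edge $x$) and $\Sfr_2$ on the right (edge $y$), that is, $\Tfr$. Throughout, I would lean on the conventions $a \Min \infty = a = \infty \Min a$, $\Sfr \GDendr_\infty \Feuille = \Sfr$, $\Feuille \DDendr_\infty \Sfr = \Sfr$, and $\Feuille \GDendr_a \Sfr = 0$ governing edges incident to leaves.

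Two degenerate cases require a separate statement, since $\Feuille$ is not an element of $\AlgLibre_{\Dendr_\gamma}$ and the identity above cannot be applied verbatim: when $\Sfr_1$ is a leaf it specializes to $\Tfr = c \GDendr_y \Sfr_2$, and when $\Sfr_2$ is a leaf to $\Tfr = \Sfr_1 \DDendr_x c$. In each of these the surviving operand is a genuine tree covered by the induction hypothesis, and the relevant label ($y$, resp.\ $x$) lies in $[\gamma]$, so the operation used is among the $\GDendr_a, \DDendr_a$ with $a \in [\gamma]$. In every case $\Tfr$ is built from $c$ and from elements of $\Gca$ using these operations, so $\Tfr \in \Gca$; this closes the induction and proves the lemma.

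I expect the only real work to be the bookkeeping behind the grafting identity, namely checking that both products genuinely collapse to a single tree rather than to an honest linear combination. This rests entirely on the degeneracy rules $\Sfr \DDendr_a \Feuille = 0$ and $\Feuille \GDendr_a \Sfr = 0$ annihilating the unwanted summands, together with the $\infty$-labeling conventions for edges meeting leaves; it is a routine but slightly delicate computation, and no structural obstruction arises beyond it.
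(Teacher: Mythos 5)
Your proof is correct and follows essentially the same route as the paper's: induction on the number of internal nodes, with the same grafting identity $\Tfr = \left(\Sfr_1 \DDendr_x \Noeud\right) \GDendr_y \Sfr_2$ reconstructing the root from the two subtrees. Your explicit treatment of the degenerate cases where $\Sfr_1$ or $\Sfr_2$ is a leaf (which the paper absorbs silently into the $\infty$-labeling conventions) is a welcome refinement rather than a deviation.
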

\begin{proof}
    First, Lemma~\ref{lem:produit_gamma_dendriforme} shows that
    $\AlgLibre_{\Dendr_\gamma}$ is a $\gamma$-polydendriform algebra.
    Let $\Dca$ be the $\gamma$-polydendriform subalgebra of
    $\AlgLibre_{\Dendr_\gamma}$ generated by $\NoeudTexte$. Let us show
    that any $\gamma$-edge valued binary tree $\Tfr$ is in $\Dca$ by
    induction on the number $n$ of its internal nodes. When $n = 1$,
    $\Tfr = \NoeudTexte$ and hence the property is satisfied. Otherwise,
    let $\Tfr_1$ (resp. $\Tfr_2$) be the left (resp. right) subtree of
    the root of $\Tfr$ and denote by $x$ (resp. $y$) the label of the
    left (resp. right) edge incident to the root of $\Tfr$. Since
    $\Tfr_1$ and $\Tfr_2$ have less internal nodes than $\Tfr$, by
    induction hypothesis, $\Tfr_1$ and~$\Tfr_2$ are in $\Dca$. Moreover,
    by definition of the operations $\GDendr_a$, $\DDendr_a$,
    $a \in [\gamma]$, of $\AlgLibre_{\Dendr_\gamma}$, one has
    \begin{equation}
        \begin{split}\end{split}
        \left(\Tfr_1 \DDendr_x \Noeud\right) \GDendr_y \Tfr_2
        =
        \begin{split}
        \begin{tikzpicture}[xscale=.5,yscale=.4]
            \node(0)at(-.50,-1.50){\begin{math}\Tfr_1\end{math}};
            \node[Feuille](2)at(2.0,-1.00){};
            \node[Noeud](1)at(1.00,.50){};
            \draw[Arete](0)edge[]node[EtiqArete]
                {\begin{math}x\end{math}}(1);
            \draw[Arete](2)--(1);
            \node(r)at(1.00,1.5){};
            \draw[Arete](r)--(1);
        \end{tikzpicture}
        \end{split}
        \GDendr_y \Tfr_2
        =
        \ArbreBinValue{x}{y}{\Tfr_1}{\Tfr_2}
        = \Tfr,
    \end{equation}
    showing that $\Tfr$ also is in $\Dca$. Therefore,
    $\Dca$ is $\AlgLibre_{\Dendr_\gamma}$, showing that
    $\AlgLibre_{\Dendr_\gamma}$
    is generated by~$\NoeudTexte$.
\end{proof}
\medskip

\begin{Theoreme} \label{thm:algebre_dendr_gamma_libre}
    For any integer $\gamma \geq 0$, the vector space
    $\AlgLibre_{\Dendr_\gamma}$ of $\gamma$-edge valued binary trees
    endowed with the operations $\GDendr_a$, $\DDendr_a$, $a \in [\gamma]$,
    is the free $\gamma$-polydendriform algebra over one generator.
\end{Theoreme}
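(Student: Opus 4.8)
The plan is to prove freeness by a dimension argument resting on the two preceding lemmas. First I would invoke Lemma~\ref{lem:produit_gamma_dendriforme}, which guarantees that $\AlgLibre_{\Dendr_\gamma}$ is a $\gamma$-polydendriform algebra, and Lemma~\ref{lem:produit_gamma_engendre_dendriforme}, which guarantees that it is generated by the single-node tree $\NoeudTexte$. Let $\Gca$ denote the free $\gamma$-polydendriform algebra over one generator $\Gfr$. By the universal property of free algebras, there is a unique $\gamma$-polydendriform algebra morphism $\phi : \Gca \to \AlgLibre_{\Dendr_\gamma}$ sending $\Gfr$ to $\NoeudTexte$, and since $\NoeudTexte$ generates $\AlgLibre_{\Dendr_\gamma}$, this $\phi$ is surjective. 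It then remains only to establish that $\phi$ is injective.

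Next I would equip both algebras with a grading and check that $\phi$ preserves it. On $\Gca$, the grading assigns degree $1$ to the generator $\Gfr$ and is additive under each operation $\GDendr_a$, $\DDendr_a$, so that its degree-$n$ component is spanned by the products of $n$ copies of $\Gfr$; by the standard description of free algebras over a nonsymmetric operad, this component is isomorphic to $\Dendr_\gamma(n)$ and hence has dimension $\dim \Dendr_\gamma(n)$. On $\AlgLibre_{\Dendr_\gamma}$, the natural grading is by the number of internal nodes; the recursive definition of $\GDendr_a$ and $\DDendr_a$ shows that a product of a tree with $m$ internal nodes and a tree with $n$ internal nodes is a sum of trees each having $m + n$ internal nodes, so the operations are additive for this grading as well and $\phi$ is a morphism of graded vector spaces. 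It therefore suffices to compare the two degree-$n$ components dimension by dimension, for then the graded surjection $\phi$ will automatically be bijective in each degree.

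Finally I would carry out the count. By Proposition~\ref{prop:serie_hilbert_dendr_gamma} and the discussion following it, $\dim \Dendr_\gamma(n) = \gamma^{n - 1} \Cat(n)$; on the other side, a $\gamma$-edge valued binary tree with $n$ internal nodes is the datum of one of the $\Cat(n)$ binary trees with $n$ internal nodes together with an arbitrary labeling on $[\gamma]$ of its $n - 1$ internal edges, which yields exactly $\gamma^{n - 1} \Cat(n)$ such trees. Hence the degree-$n$ components of $\Gca$ and of $\AlgLibre_{\Dendr_\gamma}$ have equal finite dimension, the surjection $\phi$ is an isomorphism, and $\AlgLibre_{\Dendr_\gamma}$ is the free $\gamma$-polydendriform algebra over one generator. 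The structural work being already done by the two lemmas and the Hilbert series, the only genuinely delicate point is the identification, in the second paragraph, of the degree-$n$ part of the free algebra with $\Dendr_\gamma(n)$; this is the general fact that the free algebra over one generator of a nonsymmetric operad $\Oca$ has degree-$n$ component isomorphic to $\Oca(n)$, so the main thing to verify is that $\Dendr_\gamma$ is handled as a nonsymmetric operad, as recorded in Table~\ref{tab:operades_introduites_II}.
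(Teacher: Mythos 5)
Your proposal is correct and takes essentially the same route as the paper: the paper's proof likewise combines Lemma~\ref{lem:produit_gamma_dendriforme} and Lemma~\ref{lem:produit_gamma_engendre_dendriforme} with the dimension identity $\dim \AlgLibre_{\Dendr_\gamma}(n) = \dim \Dendr_\gamma(n) = \gamma^{n-1}\Cat(n)$ coming from Proposition~\ref{prop:serie_hilbert_dendr_gamma} to conclude that no relations beyond the $\gamma$-polydendriform ones can hold. Your second and third paragraphs merely make explicit (via the graded surjection $\phi$ from the free algebra and the identification of its degree-$n$ component with $\Dendr_\gamma(n)$, valid since $\Dendr_\gamma$ is nonsymmetric) what the paper compresses into the remark that there cannot be extra relations involving the generator, so this is a rigorous expansion of the same argument rather than a different proof.
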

\begin{proof}
    By Lemmas~\ref{lem:produit_gamma_dendriforme}
    and~\ref{lem:produit_gamma_engendre_dendriforme},
    $\AlgLibre_{\Dendr_\gamma}$ is a $\gamma$-polydendriform algebra
    over one generator.
    \smallskip

    Moreover, since by Proposition~\ref{prop:serie_hilbert_dendr_gamma},
    for any $n \geq 1$, the dimension of $\AlgLibre_{\Dendr_\gamma}(n)$
    is the same as the dimension of $\Dendr_\gamma(n)$, there cannot be
    relations in $\AlgLibre_{\Dendr_\gamma}(n)$ involving $\Gfr$ that are
    not $\gamma$-polydendriform
    relations (see~\eqref{equ:relation_dendr_gamma_1_concise},
    \eqref{equ:relation_dendr_gamma_2_concise},
    and~\eqref{equ:relation_dendr_gamma_3_concise}). Hence,
    $\AlgLibre_{\Dendr_\gamma}$ is free as a $\gamma$-polydendriform
    algebra over one generator.
\end{proof}
\medskip

\section{Multiassociative operads} \label{sec:as_gamma}
There is a well-known diagram, whose definition is recalled below,
gathering the diassociative, associative, and  dendriform operads.
The main goal of this section is to define a generalization on a
nonnegative integer parameter of the associative operad to obtain a new
version of this diagram, suited to the context of pluriassociative and
polydendriform operads.
\medskip

\subsection{Two generalizations of the associative operad}
The associative operad is generated by one binary element. This operad
admits two different generalizations generated by $\gamma$ binary
elements with the particularity that one is the Koszul dual of the other.
We introduce and study in this section these two operads.
\medskip

\subsubsection{Nonsymmetric associative operad}
Recall that the {\em nonsymmetric associative operad}, or the
{\em associative operad} for short, is the operad $\As$ admitting the
presentation $\left(\GenLibre_\As, \RelLibre_\As\right)$, where
$\GenLibre_\As := \GenLibre_\As(2) := \{\MAs\}$ and $\RelLibre_\As$ is
generated by $\MAs \circ_1 \MAs - \MAs \circ_2 \MAs$. It admits the
following realization. For any $n \geq 1$, $\As(n)$ is the vector space
of dimension one generated by the corolla of arity $n$ and the partial
composition $\Cfr_1 \circ_i \Cfr_2$ where $\Cfr_1$ is the corolla of
arity $n$ and $\Cfr_2$ is the corolla of arity $m$ is the corolla of
arity $n + m - 1$ for all valid $i$.
\medskip

\subsubsection{Multiassociative operads} \label{subsubsec:as_gamma}
For any integer $\gamma \geq 0$, we define $\As_\gamma$ as the operad
admitting the presentation $\left(\GenAs, \RelAs\right)$, where
$\GenLibre_{\As_\gamma} := \GenLibre_{\As_\gamma}(2)
:= \{\MAs_a : a \in [\gamma]\}$ and $\RelLibre_{\As_\gamma}$ is generated
by
\begin{subequations}
\begin{equation} \label{equ:relation_as_gamma_1}
    \MAs_a \circ_1 \MAs_b - \MAs_b \circ_2 \MAs_b,
    \qquad a \leq b \in [\gamma],
\end{equation}
\begin{equation} \label{equ:relation_as_gamma_2}
    \MAs_b \circ_1 \MAs_a - \MAs_b \circ_2 \MAs_b,
    \qquad a < b \in [\gamma],
\end{equation}
\begin{equation} \label{equ:relation_as_gamma_3}
    \MAs_a \circ_2 \MAs_b - \MAs_b \circ_2 \MAs_b,
    \qquad a < b \in [\gamma],
\end{equation}
\begin{equation} \label{equ:relation_as_gamma_4}
    \MAs_b \circ_2 \MAs_a - \MAs_b \circ_2 \MAs_b,
    \qquad a < b \in [\gamma].
\end{equation}
\end{subequations}
This space of relations can be rephrased in a more compact way as the
space generated by
\begin{subequations}
\begin{equation} \label{equ:relation_as_gamma_1_concise}
    \MAs_a \circ_1 \MAs_{a'} - \MAs_{a \Max a'} \circ_2 \MAs_{a \Max a'},
    \qquad a, a' \in [\gamma],
\end{equation}
\begin{equation} \label{equ:relation_as_gamma_1_concise}
    \MAs_a \circ_2 \MAs_{a'} - \MAs_{a \Max a'} \circ_2 \MAs_{a \Max a'},
    \qquad a, a' \in [\gamma].
\end{equation}
\end{subequations}
We call $\As_\gamma$ the {\em $\gamma$-multiassociative operad}.
\medskip

It follows immediately that $\As_\gamma$ is a set-operad and that it
provides a generalization of the associative operad. The algebras over
$\As_\gamma$ are the $\gamma$-multiassociative algebras introduced in
Section~3.3.1 of~\cite{GirI}.
\medskip

Let us now provide a realization of $\As_\gamma$. A {\em $\gamma$-corolla}
is a rooted tree with at most one internal node labeled on $[\gamma]$.
Denote by $\AlgLibre_{\As_\gamma}(n)$ the vector space of $\gamma$-corollas
of arity $n \geq 1$, by $\AlgLibre_{\As_\gamma}$ the graded vector space
of all $\gamma$-corollas, and let
\begin{equation}
    \MAs : \AlgLibre_{\As_\gamma} \otimes \AlgLibre_{\As_\gamma}
    \to \AlgLibre_{\As_\gamma}
\end{equation}
be the linear operation where, for any $\gamma$-corollas $\Cfr_1$ and
$\Cfr_2$, $\Cfr_1 \MAs \Cfr_2$ is the $\gamma$-corolla
with $n + m - 1$ leaves and labeled by $a \Max a'$ where $n$ (resp. $m$)
is the number of leaves of $\Cfr_1$ (resp. $\Cfr_2$) and $a$ (resp. $a'$)
is the label of $\Cfr_1$ (resp. $\Cfr_2$).
\medskip

\begin{Proposition} \label{prop:realisation_koszulite_as_gamma}
    For any integer $\gamma \geq 0$, the operad $\As_\gamma$ is the
    vector space $\AlgLibre_{\As_\gamma}$ of $\gamma$-corollas and its
    partial compositions satisfy, for any $\gamma$-corollas $\Cfr_1$ and
    $\Cfr_2$, $\Cfr_1 \circ_i \Cfr_2 = \Cfr_1 \MAs \Cfr_2$ for all valid
    integer $i$. Besides, $\As_\gamma$ is a Koszul operad and the set of
    right comb syntax trees of $\OpLibre\left(\GenAs\right)$ where all
    internal nodes have a same label forms a Poincaré-Birkhoff-Witt basis
    of~$\As_\gamma$.
\end{Proposition}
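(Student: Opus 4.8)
The plan is to orient the defining relations of $\As_\gamma$ into a rewriting system on the syntax trees of $\OpLibre\left(\GenAs\right)$, to prove that this system is convergent (terminating and confluent), and then to read off all three assertions at once: the normal forms will be exactly the announced uniform right comb trees, which simultaneously provide the Poincaré-Birkhoff-Witt basis, force the dimensions $\dim \As_\gamma(n) = \gamma$ for $n \geq 2$, and, through the rewriting criterion for Koszulness, establish that $\As_\gamma$ is Koszul. The realization on $\gamma$-corollas then follows from the explicit shape of these normal forms.

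First I would orient Relations~\eqref{equ:relation_as_gamma_1}---\eqref{equ:relation_as_gamma_4} (equivalently their concise reformulation) toward the uniform right combs by means of the two rules
\begin{equation}
    \MAs_a \circ_1 \MAs_{a'} \Recr \MAs_{a \Max a'} \circ_2 \MAs_{a \Max a'}
    \qquad \text{and} \qquad
    \MAs_a \circ_2 \MAs_{a'} \Recr \MAs_{a \Max a'} \circ_2 \MAs_{a \Max a'},
    \qquad a, a' \in [\gamma].
\end{equation}
A syntax tree is a normal form precisely when it uses no $\circ_1$ composition and carries the same label on all its internal nodes, that is, when it is a right comb with a single label $b \in [\gamma]$; there are exactly $\gamma$ such trees in each arity $n \geq 2$ and one in arity $1$. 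Termination follows from a suitable admissible order on tree monomials for which both rules are strictly decreasing (the first trades a $\circ_1$ composition for a $\circ_2$ one, the second only replaces labels by their maximum), the compatibility of the order with operadic grafting propagating this to reductions performed inside larger trees.

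The main obstacle is the confluence of this system. Since the system terminates, Newman's lemma reduces this to local confluence, and because the rules are quadratic the only ambiguities come from the arity~$4$ critical pairs obtained by overlapping two left-hand sides along a common internal node. For each such overlap I would reduce both ways and verify that the two reductions reach the same normal form, namely the right comb of arity~$4$ whose common label is the maximum of the three labels occurring in the overlap. Here the associativity and commutativity of the operation $\Max$ is exactly what guarantees that the order in which the two rules are applied is irrelevant, so that every critical pair is joinable; this is the one verification that must be carried out in full, over the finitely many overlap shapes.

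Once convergence is established, the rewriting method for operads (cf.~\cite{LV12}) yields simultaneously that the normal forms form a basis of $\As_\gamma$, that this basis is a Poincaré-Birkhoff-Witt basis, and that $\As_\gamma$ is a Koszul operad, proving the last two assertions. For the realization I would finally define the linear map $\As_\gamma \to \AlgLibre_{\As_\gamma}$ sending, for $n \geq 2$, the uniform right comb of arity $n$ and label $b$ to the $\gamma$-corolla of arity $n$ labeled $b$, and the unit to the single-leaf corolla. This is a bijection between the exhibited bases, hence a linear isomorphism in each arity; it remains to check that it intertwines composition, which amounts to observing that the normal form of the grafting $\Cfr_1 \circ_i \Cfr_2$ of two uniform right combs $\Cfr_1$ (label $b$) and $\Cfr_2$ (label $b'$) is independent of $i$ and equals the uniform right comb of label $b \Max b'$, which is precisely the value of $\MAs$ on the corresponding corollas. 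This establishes $\Cfr_1 \circ_i \Cfr_2 = \Cfr_1 \MAs \Cfr_2$ for all valid $i$ and completes the proof.
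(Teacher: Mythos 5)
Your proposal is correct and takes essentially the same route as the paper: the same orientation of the relations into a quadratic rewrite system whose normal forms are the uniform-labeled right combs, termination plus confluence giving convergence, the rewriting criterion for Koszulity yielding the Poincaré-Birkhoff-Witt basis, and the identification of the normal forms with $\gamma$-corollas (the paper establishes confluence by showing every syntax tree reduces to the right comb labeled by its greatest label rather than by checking critical pairs via Newman's lemma, and it extends the generator-level map to an operad morphism with a dimension count rather than matching bases directly --- cosmetic differences). One small point: in your second rule you must exclude the case $a = a'$, since the instance $\MAs_a \circ_2 \MAs_a \Recr \MAs_a \circ_2 \MAs_a$ is the identity rewriting and cannot be strictly decreasing for any order, which is why the paper states its $\circ_2$-rules only for $a < b$.
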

\begin{proof}
    In this proof, we consider that $\GenAs$ is totally ordered by the
    relation $\leq$ satisfying $\MAs_a \leq \MAs_b$ whenever
    $a \leq b \in [\gamma]$. It is immediate that the vector space
    $\AlgLibre_{\As_\gamma}$ endowed with the partial compositions
    described in the statement of the proposition is an operad. Let us
    prove that this operad  admits the presentation
    $\left(\GenAs, \RelAs\right)$.
    \smallskip

    For this purpose, consider the quadratic rewrite rule $\Recr_\gamma$ on
    $\OpLibre\left(\GenAs\right)$ satisfying
    \begin{subequations}
    \begin{equation} \label{equ:reecriture_as_gamma_1}
        \MAs_a \circ_1 \MAs_b
        \enspace \Recr_\gamma \enspace
        \MAs_b \circ_2 \MAs_b,
        \qquad a \leq b \in [\gamma],
    \end{equation}
    \begin{equation} \label{equ:reecriture_as_gamma_2}
        \MAs_b \circ_1 \MAs_a
        \enspace \Recr_\gamma \enspace
        \MAs_b \circ_2 \MAs_b,
        \qquad a < b \in [\gamma],
    \end{equation}
    \begin{equation} \label{equ:reecriture_as_gamma_3}
        \MAs_a \circ_2 \MAs_b
        \enspace \Recr_\gamma \enspace
        \MAs_b \circ_2 \MAs_b,
        \qquad a < b \in [\gamma],
    \end{equation}
    \begin{equation} \label{equ:reecriture_as_gamma_4}
        \MAs_b \circ_2 \MAs_a
        \enspace \Recr_\gamma \enspace
        \MAs_b \circ_2 \MAs_b,
        \qquad a < b \in [\gamma].
    \end{equation}
    \end{subequations}
    Observe first that the space induced by the operad congruence induced
    by $\Recr_\gamma$ is $\RelAs$
    (see~\eqref{equ:relation_as_gamma_1}---\eqref{equ:relation_as_gamma_4}).
    Moreover, $\Recr_\gamma$ is a terminating rewrite rule and its normal
    forms are right comb syntax trees of $\OpLibre\left(\GenAs\right)$
    where all internal nodes have a same label. Besides, one can show
    that for any syntax tree $\Tfr$ of $\OpLibre\left(\GenAs\right)$, we
    have $\Tfr \overset{*}{\Recr_\gamma} \Sfr$ with $\Sfr$ is a right
    comb syntax tree where all internal nodes labeled by the greatest
    label of $\Tfr$. Therefore, $\Recr_\gamma$ is a convergent rewrite
    rule and the operad $\As$, admitting by definition the presentation
    $\left(\GenAs, \RelAs\right)$, has bases indexed by such trees.
    \smallskip

    Now, let
    \begin{equation}
        \phi : \As_\gamma \simeq
        \OpLibre\left(\GenAs\right)/_{\left\langle \RelAs \right\rangle}
        \to \AlgLibre_{\As_\gamma}
    \end{equation}
    be the map satisfying $\phi(\pi(\MAs_a)) = \Cfr_a$ where $\Cfr_a$ is
    the $\gamma$-corolla of arity $2$ with internal node labeled by
    $a \in [\gamma]$ and $\pi : \OpLibre\left(\GenAs\right) \to
    \As_\gamma$ is the canonical surjection map. Since we have
    $\phi(\pi(x)) \circ_i \phi(\pi(y)) = \phi(\pi(x')) \circ_{i'} \phi(\pi(y'))$
    for all relations $x \circ_i y \Recr_\gamma x' \circ_{i'} y'$ of
    \eqref{equ:reecriture_as_gamma_1}---\eqref{equ:reecriture_as_gamma_4},
    $\phi$ extends in a unique way into an operad morphism. First, since
    the set $G_\gamma$ of all $\gamma$-corollas of arity two is a
    generating set of $\AlgLibre_{\As_\gamma}$ and the image of $\phi$
    contains $G_\gamma$, $\phi$ is surjective. Second, since by
    definition of $\AlgLibre_{\As_\gamma}$, the bases of
    $\AlgLibre_{\As_\gamma}$ are indexed by $\gamma$-corollas, in
    accordance with what we have shown in the previous paragraph of this
    proof, $\AlgLibre_{\As_\gamma}$ and $\As_\gamma$ are isomorphic
    as graded vector spaces. Hence, $\phi$ is an operad isomorphism,
    showing that $\As_\gamma$ admits the claimed realization.
    \smallskip

    Finally, the existence of the convergent rewrite rule $\Recr_\gamma$
    implies, by the Koszulity criterion~\cite{Hof10,DK10,LV12} we have
    reformulated in Section~1.2.5 of~\cite{GirI},
    that $\As_\gamma$ is Koszul and that its Poincaré-Birkhoff-Witt basis
    is the one described in the statement of the proposition.
\end{proof}
\medskip

We have for instance in $\As_3$,
\begin{equation}
    \begin{split}
    \begin{tikzpicture}[xscale=.27,yscale=.23]
        \node[Feuille](0)at(0.00,-1.50){};
        \node[Feuille](2)at(1.00,-1.50){};
        \node[Feuille](3)at(2.00,-1.50){};
        \node[NoeudCor](1)at(1.00,0.00){\begin{math}2\end{math}};
        \draw[Arete](0)--(1);
        \draw[Arete](2)--(1);
        \draw[Arete](3)--(1);
        \node(r)at(1.00,1.7){};
        \draw[Arete](r)--(1);
    \end{tikzpicture}
    \end{split}
    \circ_1
    \begin{split}
    \begin{tikzpicture}[xscale=.27,yscale=.23]
        \node[Feuille](0)at(0.00,-1.50){};
        \node[Feuille](2)at(2.00,-1.50){};
        \node[NoeudCor](1)at(1.00,0.00){\begin{math}1\end{math}};
        \draw[Arete](0)--(1);
        \draw[Arete](2)--(1);
        \node(r)at(1.00,1.7){};
        \draw[Arete](r)--(1);
    \end{tikzpicture}
    \end{split}
    =
    \begin{split}
    \begin{tikzpicture}[xscale=.27,yscale=.23]
        \node[Feuille](0)at(0.00,-1.50){};
        \node[Feuille](1)at(1.00,-1.50){};
        \node[Feuille](3)at(3.00,-1.50){};
        \node[Feuille](4)at(4.00,-1.50){};
        \node[NoeudCor](2)at(2.00,0.00){\begin{math}2\end{math}};
        \draw[Arete](0)--(2);
        \draw[Arete](1)--(2);
        \draw[Arete](3)--(2);
        \draw[Arete](4)--(2);
        \node(r)at(2.00,1.7){};
        \draw[Arete](r)--(2);
    \end{tikzpicture}
    \end{split}\,,
\end{equation}
and
\begin{equation}
    \begin{split}
    \begin{tikzpicture}[xscale=.27,yscale=.23]
        \node[Feuille](0)at(0.00,-1.50){};
        \node[Feuille](2)at(2.00,-1.50){};
        \node[NoeudCor](1)at(1.00,0.00){\begin{math}2\end{math}};
        \draw[Arete](0)--(1);
        \draw[Arete](2)--(1);
        \node(r)at(1.00,1.7){};
        \draw[Arete](r)--(1);
    \end{tikzpicture}
    \end{split}
    \circ_2
    \begin{split}
    \begin{tikzpicture}[xscale=.27,yscale=.23]
        \node[Feuille](0)at(0.00,-1.50){};
        \node[Feuille](2)at(1.00,-1.50){};
        \node[Feuille](3)at(2.00,-1.50){};
        \node[NoeudCor](1)at(1.00,0.00){\begin{math}3\end{math}};
        \draw[Arete](0)--(1);
        \draw[Arete](2)--(1);
        \draw[Arete](3)--(1);
        \node(r)at(1.00,1.7){};
        \draw[Arete](r)--(1);
    \end{tikzpicture}
    \end{split}
    =
    \begin{split}
    \begin{tikzpicture}[xscale=.27,yscale=.23]
        \node[Feuille](0)at(0.00,-1.50){};
        \node[Feuille](1)at(1.00,-1.50){};
        \node[Feuille](3)at(3.00,-1.50){};
        \node[Feuille](4)at(4.00,-1.50){};
        \node[NoeudCor](2)at(2.00,0.00){\begin{math}3\end{math}};
        \draw[Arete](0)--(2);
        \draw[Arete](1)--(2);
        \draw[Arete](3)--(2);
        \draw[Arete](4)--(2);
        \node(r)at(2.00,1.7){};
        \draw[Arete](r)--(2);
    \end{tikzpicture}
    \end{split}\,.
\end{equation}
\medskip

We deduce from Proposition~\ref{prop:realisation_koszulite_as_gamma}
that the Hilbert series of $\As_\gamma$ satisfies
\begin{equation} \label{equ:serie_hilbert_as_gamma}
    \Hca_{\As_\gamma}(t) = \frac{t + (\gamma - 1)t^2}{1 - t}.
\end{equation}
and that for all $n \geq 2$, $\dim \As_\gamma(n) = \gamma$.
\medskip

\subsubsection{Dual multiassociative operads} \label{subsubsec:das_gamma}
Since $\As_\gamma$ is a binary and quadratic operad, its admits a Koszul
dual, denoted by $\DAs_\gamma$ and called
{\em $\gamma$-dual multiassociative operad}. The presentation of this
operad is provided by next result.
\medskip

\begin{Proposition} \label{prop:presentation_as_gamma_duale}
    For any integer $\gamma \geq 0$, the operad $\DAs_\gamma$ admits the
    following presentation. It is generated by
    $\GenDAs := \GenDAs(2) := \{\MDAsA_a : a \in [\gamma]\}$ and its space
    of relations $\RelDAs$ is generated by
    \begin{equation} \label{equ:relation_das_gamma_alternative}
        \MDAsA_b \circ_1 \MDAsA_b - \MDAsA_b \circ_2 \MDAsA_b
        + \left(\sum_{a < b} \MDAsA_a \circ_1 \MDAsA_b
        + \MDAsA_b \circ_1 \MDAsA_a - \MDAsA_a \circ_2 \MDAsA_b
        - \MDAsA_b \circ_2 \MDAsA_a\right),
        \qquad b \in [\gamma].
    \end{equation}
\end{Proposition}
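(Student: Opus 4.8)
The plan is to apply the definition of the Koszul dual directly. Identifying the generator $\MDAsA_a$ with $\MAs_a$ for each $a \in [\gamma]$, the operad $\DAs_\gamma$ admits by definition the presentation $(\GenDAs, \RelAs^\perp)$, where $\RelAs^\perp$ is the annihilator of $\RelAs$ inside $\OpLibre(\GenAs)(3)$ for the scalar product of Section~\ref{subsec:dual_de_Koszul}. Writing $r_b$ for the element appearing in \eqref{equ:relation_das_gamma_alternative} associated with a given $b \in [\gamma]$, it therefore suffices to prove that the family $\{r_b : b \in [\gamma]\}$ is a basis of $\RelAs^\perp$. I would establish this in two steps: first that each $r_b$ lies in $\RelAs^\perp$, and second that $\dim \RelAs^\perp = \gamma$, together with the linear independence of the $r_b$.

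For the first step, recall that $\RelAs$ is generated by the four families \eqref{equ:relation_as_gamma_1}---\eqref{equ:relation_as_gamma_4}, so it is enough to check that $\langle r_b, \rho \rangle = 0$ for $\rho$ ranging over these generators. The key observation is that the scalar product pairs a monomial $x \circ_1 y$ only with the identical monomial $x \circ_1 y$ (contributing $+1$) and a monomial $x \circ_2 y$ only with the identical monomial $x \circ_2 y$ (contributing $-1$), all other pairings vanishing. Hence, for a fixed generator $\rho$, the scalar $\langle r_b, \rho \rangle$ reduces to reading off, with the appropriate signs, the coefficients in $r_b$ of the at most two monomials occurring in $\rho$. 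For instance, against $\rho = \MAs_a \circ_1 \MAs_c - \MAs_c \circ_2 \MAs_c$ with $a \leq c$, one obtains the coefficient of $\MDAsA_a \circ_1 \MDAsA_c$ in $r_b$ plus the coefficient of $\MDAsA_c \circ_2 \MDAsA_c$ in $r_b$; both coefficients vanish unless $c = b$, in which case they equal $+1$ and $-1$ respectively and cancel. The remaining three families are handled by the same bookkeeping, each time the $\circ_1$-contribution and the $\circ_2$-contribution of $r_b$ cancelling exactly.

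For the second step, the free operad satisfies $\dim \OpLibre(\GenAs)(3) = 2\gamma^2$, since its arity-$3$ component is spanned by the $2\gamma^2$ monomials $\MAs_a \circ_i \MAs_{a'}$ with $a, a' \in [\gamma]$ and $i \in \{1, 2\}$. By Proposition~\ref{prop:realisation_koszulite_as_gamma} we have $\dim \As_\gamma(3) = \gamma$, and since $\As_\gamma(3) = \OpLibre(\GenAs)(3) / \RelAs$ this gives $\dim \RelAs = 2\gamma^2 - \gamma$, whence $\dim \RelAs^\perp = \gamma$. Finally, the $r_b$ are linearly independent, because $\MDAsA_b \circ_1 \MDAsA_b$ appears in $r_b$ but in no $r_{b'}$ with $b' \neq b$ (the off-diagonal monomials occurring in each $r_{b'}$ always carry two distinct labels). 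Thus $\{r_b : b \in [\gamma]\}$ spans a $\gamma$-dimensional subspace of $\RelAs^\perp$ and is therefore a basis of it, which yields the claimed presentation. The only delicate point is the sign bookkeeping in the orthogonality computation: because the scalar product is antisymmetric between the $\circ_1$ and $\circ_2$ slots while the relations \eqref{equ:relation_as_gamma_1}---\eqref{equ:relation_as_gamma_4} mix both slots, one must track carefully which monomials of $r_b$ survive the pairing and verify the cancellation family by family.
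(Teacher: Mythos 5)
Your proof is correct and takes essentially the same route as the paper, whose own proof consists of the same identification of $\MDAsA_a$ with $\MAs_a$ followed by the orthogonality verification it dismisses as a ``straightforward computation''; you simply make that computation explicit, together with the dimension count $\dim \RelAs^\perp = 2\gamma^2 - (2\gamma^2 - \gamma) = \gamma$ and the linear independence of the elements~\eqref{equ:relation_das_gamma_alternative}. The only cosmetic slip is your closing remark that in every case a $\circ_1$-contribution cancels a $\circ_2$-contribution: for the families~\eqref{equ:relation_as_gamma_3} and~\eqref{equ:relation_as_gamma_4} both paired monomials lie in the second slot, so two $\circ_2$-contributions cancel against each other, exactly as your sign convention yields.
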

\begin{proof}
    By a straightforward computation, and by identifying $\MDAsA_a$ with
    $\MAs_a$ for any $a \in [\gamma]$, we obtain that the space $\RelDAs$
    of the statement of the proposition satisfies $\RelDAs^\perp = \RelAs$.
    Hence, $\DAs$ admits the claimed presentation.
\end{proof}
\medskip

For any integer $\gamma \geq 0$, let $\MDAs_b$, $b \in [\gamma]$, the
elements of $\OpLibre\left(\GenDAs\right)$ defined by
\begin{equation} \label{equ:definition_operateur_das_gamma}
    \MDAs_b := \sum_{a \in [b]} \MDAsA_a.
\end{equation}
Then, since for all $b \in [\gamma]$ we have
\begin{equation}
    \MDAsA_b =
    \begin{cases}
        \MDAs_1 & \mbox{if } b = 1, \\
        \MDAs_b - \MDAs_{b - 1} & \mbox{otherwise},
    \end{cases}
\end{equation}
by triangularity, the family $\GenDAs' := \{\MDAs_b : b \in [\gamma]\}$
forms a basis of $\OpLibre\left(\GenDAs\right)(2)$ and then, generates
$\OpLibre\left(\GenDAs\right)$ as an operad. Let us now express a
presentation of $\DAs_\gamma$ through the family $\GenDAs'$.

\begin{Proposition} \label{prop:autre_presentation_das_gamma}
    For any integer $\gamma \geq 0$, the operad $\DAs_\gamma$ admits the
    following presentation. It is generated by $\GenDAs'$ and its space
    of relations $\RelDAs'$ is generated by
    \begin{equation} \label{equ:relation_das_gamma}
        \MDAs_a \circ_1 \MDAs_a - \MDAs_a \circ_2 \MDAs_a,
        \qquad a \in [\gamma].
    \end{equation}
\end{Proposition}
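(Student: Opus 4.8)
The plan is to follow the same strategy as in the proof of Theorem~\ref{thm:autre_presentation_dendr_gamma}. Since, by the paragraph preceding the statement, the family $\GenDAs'$ is obtained from $\GenDAs$ by a unitriangular change of basis on the generating space, the free operads $\OpLibre(\GenDAs)$ and $\OpLibre(\GenDAs')$ are identified, and it suffices to show that the subspace $\RelDAs'$ of $\OpLibre(\GenDAs)(3)$ generated by the relations~\eqref{equ:relation_das_gamma} equals the subspace $\RelDAs$ generated by the relations~\eqref{equ:relation_das_gamma_alternative} of Proposition~\ref{prop:presentation_as_gamma_duale}. I would establish this equality by exhibiting an explicit unitriangular relationship between the two spanning families, which avoids any dimension count.

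First I would expand each generator of $\RelDAs'$ into the generators $\MDAsA_a$. Using the definition~\eqref{equ:definition_operateur_das_gamma} of $\MDAs_a$, I compute
\begin{equation}
    S_a := \MDAs_a \circ_1 \MDAs_a - \MDAs_a \circ_2 \MDAs_a
    = \sum_{c, d \in [a]}
        \left(\MDAsA_c \circ_1 \MDAsA_d - \MDAsA_c \circ_2 \MDAsA_d\right),
    \qquad a \in [\gamma].
\end{equation}
Writing $\rho_b$ for the generator of $\RelDAs$ indexed by $b$ in~\eqref{equ:relation_das_gamma_alternative}, the key step is the telescoping identity $S_a - S_{a - 1} = \rho_a$, with the convention $S_0 := 0$. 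Indeed, passing from $[a - 1]$ to $[a]$ adds to the double sum exactly the terms with $\max\{c, d\} = a$, namely the diagonal term $\MDAsA_a \circ_1 \MDAsA_a - \MDAsA_a \circ_2 \MDAsA_a$ together with the off-diagonal pairs $(c, a)$ and $(a, c)$ for $c < a$, and these are precisely the terms assembled in $\rho_a$.

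From $S_a - S_{a - 1} = \rho_a$ I obtain both $S_a = \sum_{b \in [a]} \rho_b$ and $\rho_a = S_a - S_{a - 1}$, so the families $\{S_a\}_{a \in [\gamma]}$ and $\{\rho_a\}_{a \in [\gamma]}$ span the same subspace of $\OpLibre(\GenDAs)(3)$. Hence $\RelDAs' = \RelDAs$, and $\DAs_\gamma$ admits the claimed presentation. The only delicate point is the verification of the telescoping identity: one must check carefully that the two off-diagonal contributions $\MDAsA_c \circ_1 \MDAsA_a - \MDAsA_c \circ_2 \MDAsA_a$ and $\MDAsA_a \circ_1 \MDAsA_c - \MDAsA_a \circ_2 \MDAsA_c$ coming from the pairs $(c, a)$ and $(a, c)$ reassemble, after reordering, into the four-term summand indexed by $a < b$ in~\eqref{equ:relation_das_gamma_alternative}, keeping track of which operand carries $\circ_1$ versus $\circ_2$.
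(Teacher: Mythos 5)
Your proof is correct, and it is worth comparing with the paper's because the concluding step is genuinely different. The computational core is sound: by bilinearity of the partial compositions, $S_a$ expands as $\sum_{c, d \in [a]} \left(\MDAsA_c \circ_1 \MDAsA_d - \MDAsA_c \circ_2 \MDAsA_d\right)$, and the telescoping identity $S_a - S_{a - 1} = \rho_a$ holds because the pairs $(c, d) \in [a]^2 \setminus [a - 1]^2$ are exactly $(a, a)$, together with $(c, a)$ and $(a, c)$ for $c < a$, whose contributions reassemble precisely into the generator of~\eqref{equ:relation_das_gamma_alternative} indexed by $b = a$. The paper instead proves only the inclusion $\RelDAs' \subseteq \RelDAs$ by the expansion computation (in substance your identity $S_a = \sum_{b \in [a]} \rho_b$), and then closes the gap with a dimension count: the elements~\eqref{equ:relation_das_gamma} are linearly independent, so $\dim \RelDAs' = \gamma$, which by Proposition~\ref{prop:presentation_as_gamma_duale} is also $\dim \RelDAs$ — the same template it uses for Theorem~\ref{thm:autre_presentation_dendr_gamma}. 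Your unitriangular relation $\rho_a = S_a - S_{a - 1}$ yields both inclusions simultaneously: the families $\{S_a\}_{a \in [\gamma]}$ and $\{\rho_a\}_{a \in [\gamma]}$ visibly span the same subspace of $\OpLibre\left(\GenDAs\right)(3)$, with no appeal to linear independence of either family or to any dimension computation. What the paper's route buys is uniformity with the polydendriform case, where the space of relations is spanned by several families of more intricate elements and no comparably clean telescoping presents itself, so the dimension count is the natural tool; what your route buys is a self-contained and slightly stronger conclusion (an explicit invertible change of spanning families), which would remain valid even without knowing in advance that the generators of $\RelDAs$ are independent. Both arguments are complete.
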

\begin{proof}
    Let us show that $\RelDAs'$ is equal to the space of relations
    $\RelDAs$ of $\DAs_\gamma$ defined in the statement of
    Proposition~\ref{prop:presentation_as_gamma_duale}. By this last
    proposition, for any $x \in \OpLibre\left(\GenDAs\right)(3)$,
    $x$ is in $\RelDAs$ if and only if $\pi(x) = 0$ where
    $\pi : \OpLibre\left(\GenDAs\right) \to \DAs$ is the canonical
    surjection map. By a straightforward computation, by
    expanding~\eqref{equ:relation_das_gamma} over the elements $\MDAsA_a$,
    $a \in [\gamma]$, by
    using~\eqref{equ:definition_operateur_das_gamma} we obtain
    that~\eqref{equ:relation_das_gamma} can be expressed as a sum
    of elements of $\RelDAs$. This implies that $\pi(x) = 0$ and hence
    that $\RelDAs'$ is a subspace of $\RelDAs$.
    \smallskip

    Now, one can observe that for all $a \in [\gamma]$, the
    elements~\eqref{equ:relation_das_gamma} are linearly independent.
    Then, $\RelDAs'$ has dimension $\gamma$ which is also, by
    Proposition~\ref{prop:presentation_as_gamma_duale}, the dimension of
    $\RelDAs$. The statement of the proposition follows.
\end{proof}
\medskip

Observe, from the presentation provided by
Proposition~\ref{prop:autre_presentation_das_gamma} of $\DAs_\gamma$,
that $\DAs_2$ is the operad denoted by $\DeuxAs$ in \cite{LR06}.
\medskip

Notice that the Koszul dual of $\DAs_\gamma$ through its presentation
$\left(\GenDAs', \RelDAs'\right)$ of
Proposition~\ref{prop:autre_presentation_das_gamma} gives rise to
the following presentation for $\As_\gamma$. This last operad
admits the presentation $\left(\GenAs', \RelAs'\right)$ where
$\GenAs' := \GenAs'(2) := \{\MAsA_a : a \in [\gamma]\}$
and $\RelAs'$ is generated by
\begin{subequations}
\begin{equation}
    \MAsA_a \circ_1 \MAsA_{a'}, \qquad a \ne a' \in [\gamma],
\end{equation}
\begin{equation}
    \MAsA_a \circ_2 \MAsA_{a'}, \qquad a \ne a' \in [\gamma],
\end{equation}
\begin{equation}
    \MAsA_a \circ_1 \MAsA_a -
    \MAsA_a \circ_2 \MAsA_a, \qquad a \in [\gamma].
\end{equation}
\end{subequations}
Indeed, $\RelAs'$ is the space $\RelAs$ through the identification
\begin{equation}
    \MAsA_a =
    \begin{cases}
        \MAs_\gamma & \mbox{if } a = \gamma, \\
        \MAs_a - \MAs_{a + 1} & \mbox{otherwise}.
    \end{cases}
\end{equation}
\medskip

\begin{Proposition} \label{prop:serie_hilbert_das_gamma}
    For any integer $\gamma \geq 0$, the Hilbert series
    $\Hca_{\DAs_\gamma}(t)$ of the operad $\DAs_\gamma$ satisfies
    \begin{equation} \label{equ:serie_hilbert_das_gamma}
        \Hca_{\DAs_\gamma}(t) =
        t + t\, \Hca_{\DAs_\gamma}(t) +
        (\gamma - 1) \, \Hca_{\DAs_\gamma}(t)^2.
    \end{equation}
\end{Proposition}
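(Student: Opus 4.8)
The plan is to mimic the proof of Proposition~\ref{prop:serie_hilbert_dendr_gamma} and to exploit the Koszul duality between $\As_\gamma$ and $\DAs_\gamma$. First I would record that $\As_\gamma$ is a Koszul operad by Proposition~\ref{prop:realisation_koszulite_as_gamma}, with Hilbert series given by~\eqref{equ:serie_hilbert_as_gamma}, and that $\DAs_\gamma$, being by definition its Koszul dual, is therefore also Koszul and admits a Hilbert series (each of its arity components is finite-dimensional, being a quotient of the free operad on $\gamma$ binary generators). Hence relation~\eqref{equ:relation_series_hilbert_operade_duale} applies with $\Oca := \As_\gamma$ and $\Oca^! := \DAs_\gamma$, and gives
\begin{equation}
    \Hca_{\As_\gamma}\left(-\Hca_{\DAs_\gamma}(-t)\right) = t.
\end{equation}

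Next I would pass to the alternating series $\bar\Hca_{\DAs_\gamma}(t) := \Hca_{\DAs_\gamma}(-t)$ and $\bar\Hca_{\As_\gamma}(t) := \Hca_{\As_\gamma}(-t)$, so that the displayed identity becomes $\bar\Hca_{\As_\gamma}\left(\bar\Hca_{\DAs_\gamma}(t)\right) = t$, expressing that these two series are the inverses of one another for series composition. From~\eqref{equ:serie_hilbert_as_gamma} one has
\begin{equation}
    \bar\Hca_{\As_\gamma}(s) = \frac{-s + (\gamma - 1)s^2}{1 + s},
\end{equation}
so substituting $s := \bar\Hca_{\DAs_\gamma}(t)$ and clearing the denominator turns $\bar\Hca_{\As_\gamma}\left(\bar\Hca_{\DAs_\gamma}(t)\right) = t$ into a polynomial identity relating $\bar\Hca_{\DAs_\gamma}(t)$ and $t$.

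Finally I would rearrange this identity into
\begin{equation}
    \bar\Hca_{\DAs_\gamma}(t) = -t - t\,\bar\Hca_{\DAs_\gamma}(t) + (\gamma - 1)\,\bar\Hca_{\DAs_\gamma}(t)^2,
\end{equation}
and then substitute $t$ by $-t$, which, since $\bar\Hca_{\DAs_\gamma}(-t) = \Hca_{\DAs_\gamma}(t)$, produces exactly~\eqref{equ:serie_hilbert_das_gamma}. The computation is entirely elementary; the only points requiring care are the sign bookkeeping across the two alternating substitutions and the correct inversion of the rational function $\bar\Hca_{\As_\gamma}$. The one genuine input is the Koszulity of $\As_\gamma$, needed to legitimize the use of~\eqref{equ:relation_series_hilbert_operade_duale}, which is already established, so I do not anticipate any real obstacle.
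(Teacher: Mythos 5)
Your proposal is correct and takes essentially the same approach as the paper: the one genuine input is the Koszulity of $\As_\gamma$ (Proposition~\ref{prop:realisation_koszulite_as_gamma}), which legitimizes the inversion formula~\eqref{equ:relation_series_hilbert_operade_duale}, after which the functional equation follows by elementary sign bookkeeping with the alternating series $\bar\Hca_{\As_\gamma}(t)$ and $\bar\Hca_{\DAs_\gamma}(t)$. The only (immaterial) difference is direction: the paper verifies that the candidate series defined by~\eqref{equ:serie_hilbert_das_gamma} has alternating version compositionally inverse to $\bar\Hca_{\As_\gamma}(t)$, whereas you invert $\bar\Hca_{\As_\gamma}$ directly and rearrange.
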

\begin{proof}
    By setting $\bar \Hca_{\DAs_\gamma}(t) := \Hca_{\DAs_\gamma}(-t)$,
    from~\eqref{equ:serie_hilbert_das_gamma}, we obtain
    \begin{equation}
        t =
        \frac{-\bar \Hca_{\DAs_\gamma}(t) +
            (\gamma - 1)\bar \Hca_{\DAs_\gamma}(t)^2}
             {1 + \bar \Hca_{\DAs_\gamma}(t)}.
    \end{equation}
    Moreover, by setting
    $\bar \Hca_{\As_\gamma}(t) := \Hca_{\As_\gamma}(-t)$, where
    $\Hca_{\As_\gamma}(t)$ is defined by~\eqref{equ:serie_hilbert_as_gamma},
    we have
    \begin{equation} \label{equ:serie_hilbert_das_gamma_demo}
        \bar \Hca_{\As_\gamma}\left(\bar \Hca_{\DAs_\gamma}(t)\right) =
        \frac{-\bar \Hca_{\DAs_\gamma}(t) +
            (\gamma - 1)\bar \Hca_{\DAs_\gamma}(t)^2}
             {1 + \bar \Hca_{\DAs_\gamma}(t)}
        = t,
    \end{equation}
    showing that $\bar \Hca_{\As_\gamma}(t)$ and $\bar \Hca_{\DAs_\gamma}(t)$
    are the inverses for each other for series composition.
    \smallskip

    Now, since by Proposition~\ref{prop:realisation_koszulite_as_gamma},
    $\As_\gamma$ is a Koszul operad and its Hilbert series is
    $\Hca_{\As_\gamma}(t)$, and since $\DAs_\gamma$ is by definition
    the Koszul dual of $\As_\gamma$, the Hilbert series of these two
    operads satisfy~\eqref{equ:relation_series_hilbert_operade_duale}.
    Therefore, \eqref{equ:serie_hilbert_das_gamma_demo} implies that
    the Hilbert series of $\DAs_\gamma$ is $\Hca_{\DAs_\gamma}(t)$.
\end{proof}
\medskip

A {\em Schröder tree}~\cite{Sta01,Sta11} is a planar rooted tree such
that internal nodes have two of more children. By examining the expression
for $\Hca_{\DAs_\gamma}(t)$ of the statement of
Proposition~\ref{prop:serie_hilbert_das_gamma}, we observe that for any
$n \geq 1$, $\DAs_\gamma(n)$ can be seen as the vector space
$\AlgLibre_{\DAs_\gamma}(n)$ of Schröder trees with $n$ internal nodes,
all labeled on $[\gamma]$ such that the label of an internal node is
different from the labels of its children that are internal nodes. We
call these trees {\em $\gamma$-alternating Schröder trees}. Let us also
denote by $\AlgLibre_{\DAs_\gamma}$the graded vector space of all
$\gamma$-alternating Schröder trees. For instance,
\begin{equation}
    \begin{split}
    \begin{tikzpicture}[xscale=.27,yscale=.13]
        \node[Feuille](0)at(0.00,-14.40){};
        \node[Feuille](10)at(8.00,-19.20){};
        \node[Feuille](12)at(10.00,-19.20){};
        \node[Feuille](13)at(11.00,-19.20){};
        \node[Feuille](15)at(13.00,-19.20){};
        \node[Feuille](17)at(15.00,-14.40){};
        \node[Feuille](19)at(17.00,-19.20){};
        \node[Feuille](2)at(1.00,-14.40){};
        \node[Feuille](21)at(18.00,-19.20){};
        \node[Feuille](22)at(19.00,-19.20){};
        \node[Feuille](23)at(20.00,-4.80){};
        \node[Feuille](3)at(2.00,-14.40){};
        \node[Feuille](5)at(4.00,-9.60){};
        \node[Feuille](6)at(5.00,-4.80){};
        \node[Feuille](8)at(7.00,-14.40){};
        \node[NoeudSchr](1)at(1.00,-9.60){\begin{math}2\end{math}};
        \node[NoeudSchr](11)at(9.00,-14.40){\begin{math}1\end{math}};
        \node[NoeudSchr](14)at(12.00,-14.40){\begin{math}3\end{math}};
        \node[NoeudSchr](16)at(14.00,-4.80){\begin{math}3\end{math}};
        \node[NoeudSchr](18)at(16.00,-9.60){\begin{math}2\end{math}};
        \node[NoeudSchr](20)at(18.00,-14.40){\begin{math}1\end{math}};
        \node[NoeudSchr](4)at(3.00,-4.80){\begin{math}3\end{math}};
        \node[NoeudSchr](7)at(6.00,0.00){\begin{math}1\end{math}};
        \node[NoeudSchr](9)at(9.00,-9.60){\begin{math}2\end{math}};
        \draw[Arete](0)--(1);
        \draw[Arete](1)--(4);
        \draw[Arete](10)--(11);
        \draw[Arete](11)--(9);
        \draw[Arete](12)--(11);
        \draw[Arete](13)--(14);
        \draw[Arete](14)--(9);
        \draw[Arete](15)--(14);
        \draw[Arete](16)--(7);
        \draw[Arete](17)--(18);
        \draw[Arete](18)--(16);
        \draw[Arete](19)--(20);
        \draw[Arete](2)--(1);
        \draw[Arete](20)--(18);
        \draw[Arete](21)--(20);
        \draw[Arete](22)--(20);
        \draw[Arete](23)--(7);
        \draw[Arete](3)--(1);
        \draw[Arete](4)--(7);
        \draw[Arete](5)--(4);
        \draw[Arete](6)--(7);
        \draw[Arete](8)--(9);
        \draw[Arete](9)--(16);
        \node(r)at(6.00,3){};
        \draw[Arete](r)--(7);
    \end{tikzpicture}
    \end{split}
\end{equation}
is a $3$-alternating Schröder tree and a basis element of $\DAs_3(9)$.
\medskip

We deduce also from Proposition~\ref{prop:serie_hilbert_das_gamma} that
\begin{equation}
    \Hca_{\DAs_\gamma}(t) =
    \frac{1 - \sqrt{1 - (4\gamma - 2)t + t^2} - t}{2(\gamma - 1)}.
\end{equation}
By denoting by $\Nar(n, k)$ the {\em Narayana number}~\cite{Nar55}
defined by
\begin{equation} \label{equ:definition_narayana}
    \Nar(n, k) := \frac{1}{k + 1} \binom{n - 1}{k} \binom{n}{k},
\end{equation}
we obtain that for all $n \geq 1$,
\begin{equation}
    \dim \DAs_\gamma(n) =
    \sum_{k = 0}^{n - 2}
    \gamma^{k + 1} (\gamma - 1)^{n - k - 2} \, \Nar(n - 1, k).
\end{equation}
This formula is a consequence of the fact that $\Nar(n - 1, k)$ is the
number of binary trees with $n$ leaves and with exactly $k$ internal
nodes having a internal node as a left child, the fact that  the number
$\Schr(n)$ of Schröder trees with $n$ leaves expresses as
\begin{equation}
    \Schr(n) = \sum_{k = 0}^{n - 2} 2^k \, \Nar(n - 1, k),
\end{equation}
and the fact that any Schröder tree $\Sfr$ with $n$ leaves can be encoded
by a binary tree $\Tfr$ with $n$ leaves where any left oriented edge
connecting two internal nodes of $\Tfr$ is labeled on $[2]$ ($\Sfr$ is
obtained from $\Tfr$ by contracting all edges labeled by $2$).
\medskip

For instance, the first dimensions of  $\DAs_1$, $\DAs_2$, $\DAs_3$, and
$\DAs_4$ are respectively
\begin{equation}
    1, 1, 1, 1, 1, 1, 1, 1, 1, 1, 1,
\end{equation}
\begin{equation}
    1, 2, 6, 22, 90, 394, 1806, 8558, 41586, 206098, 1037718,
\end{equation}
\begin{equation}
    1, 3, 15, 93, 645, 4791, 37275, 299865, 2474025, 20819307, 178003815,
\end{equation}
\begin{equation}
    1, 4, 28, 244, 2380, 24868, 272188, 3080596, 35758828, 423373636, 5092965724.
\end{equation}
The second one is Sequence~\Sloane{A006318}, the third one is
Sequence~\Sloane{A103210}, and the last one is Sequence~\Sloane{A103211}
of~\cite{Slo}.
\medskip

Let us now establish a realization of $\DAs_\gamma$.
\medskip

\begin{Proposition} \label{prop:realisation_das_gamma}
    For any nonnegative integer $\gamma$, the operad $\DAs_\gamma$ is
    the vector space $\AlgLibre_{\DAs_\gamma}$ of $\gamma$-alternating
    Schröder trees. Moreover, for any $\gamma$-alternating Schröder
    trees $\Sfr$ and $\Tfr$, $\Sfr \circ_i \Tfr$ is the $\gamma$-alternating
    Schröder tree obtained by grafting the root of $\Tfr$ on the $i$th
    leaf $x$ of $\Sfr$ and then, if the father $y$ of $x$ and the root
    $z$ of $\Tfr$ have a same label, by contracting the edge connecting
    $y$ and $z$.
\end{Proposition}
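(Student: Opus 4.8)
The plan is to endow the vector space $\AlgLibre_{\DAs_\gamma}$ of $\gamma$-alternating Schröder trees with the grafting-and-contraction operation described in the statement, to check that this operation makes $\AlgLibre_{\DAs_\gamma}$ an operad, and then to produce an isomorphism between this operad and $\DAs_\gamma$. To build the isomorphism I would rely on the presentation $\left(\GenDAs', \RelDAs'\right)$ of Proposition~\ref{prop:autre_presentation_das_gamma} rather than on the one of Proposition~\ref{prop:presentation_as_gamma_duale}, since its single family of relations is exactly what the contraction rule renders transparent; concretely, I would send each generator $\MDAs_a$ to the arity-two corolla $\Cfr_a$ whose unique internal node is labeled by $a$.

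First I would check that the operation is well defined, namely that $\Sfr \circ_i \Tfr$ is again a $\gamma$-alternating Schröder tree. Grafting the root $z$ of $\Tfr$ onto the $i$th leaf of $\Sfr$ creates at most one new edge between two internal nodes, the one joining the father $y$ of that leaf to $z$. As $\Sfr$ and $\Tfr$ are alternating, the alternating condition can fail only when $y$ and $z$ share their label, which is precisely the case handled by contracting that edge; after contraction the merged node keeps this common label, while its children, inherited from $y$ and from $z$, are by hypothesis leaves or internal nodes carrying a different label. Hence the result is alternating.

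Next I would establish the operad axioms. The cleanest route is to read the operation as $\Sfr \circ_i \Tfr = \kappa\!\left(\Sfr \, \widehat{\circ}_i \, \Tfr\right)$, where $\widehat{\circ}_i$ is plain grafting of $\gamma$-labeled Schröder trees, without any contraction, and $\kappa$ contracts every internal edge joining two internal nodes that share a label. Plain grafting is the substitution of trees, so it already makes the $\gamma$-labeled Schröder trees an operad; it then suffices to prove two local facts about $\kappa$: that it is well defined, the contractions of same-label edges being confluent, a chain of three internal nodes all labeled $a$ collapsing to a single $a$-node independently of the order of the two contractions; and that it is compatible with grafting, in the sense that contracting may be performed before or after a grafting, $\kappa\!\left(\Sfr \, \widehat{\circ}_i \, \Tfr\right) = \kappa\!\left(\kappa(\Sfr) \, \widehat{\circ}_i \, \kappa(\Tfr)\right)$ for all $\gamma$-labeled Schröder trees $\Sfr$ and $\Tfr$. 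Granting these, the associativity and the equivariance of $\widehat{\circ}_i$ descend to $\circ_i$, and the one-leaf tree is the unit. This confluence-and-compatibility verification is the main obstacle: it requires careful bookkeeping of how a contraction at the $\Sfr$-$\Tfr$ junction can interact with a contraction produced by a later grafting performed near that same junction.

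Finally I would assemble the isomorphism. The assignment $\MDAs_a \mapsto \Cfr_a$ extends to an operad morphism from $\OpLibre\left(\GenDAs'\right)$ to $\AlgLibre_{\DAs_\gamma}$; since for every $a$ both $\Cfr_a \circ_1 \Cfr_a$ and $\Cfr_a \circ_2 \Cfr_a$ contract to the arity-three corolla labeled by $a$, each generator $\MDAs_a \circ_1 \MDAs_a - \MDAs_a \circ_2 \MDAs_a$ of $\RelDAs'$ lies in the kernel and the morphism factors through a morphism $\phi$ from $\DAs_\gamma$. It is surjective because the arity-two corollas generate every $\gamma$-alternating Schröder tree: by induction on the number of internal nodes, a tree with root labeled $a$ and root subtrees $\Tfr_1, \dots, \Tfr_k$ is obtained by first forming the arity-$k$ corolla labeled $a$ as an iterated self-composition of $\Cfr_a$, each step contracting because the labels agree, and then grafting the $\Tfr_j$, where no contraction occurs since the alternating condition forces each root label to differ from $a$. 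As $\phi$ is a degreewise surjection between graded spaces of equal dimension, those of $\DAs_\gamma$ being furnished by Proposition~\ref{prop:serie_hilbert_das_gamma} and counting exactly the $\gamma$-alternating Schröder trees, $\phi$ is an isomorphism, giving the claimed realization.
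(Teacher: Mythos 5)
Your proposal is correct and follows essentially the same route as the paper: the same morphism $\phi$ built on the presentation $\left(\GenDAs', \RelDAs'\right)$ of Proposition~\ref{prop:autre_presentation_das_gamma}, the same check that $\Cfr_a \circ_1 \Cfr_a = \Cfr_a \circ_2 \Cfr_a$ disposes of the relations, surjectivity from the arity-two corollas, and the conclusion by comparing graded dimensions via Proposition~\ref{prop:serie_hilbert_das_gamma}. The only divergence is one of detail: where the paper declares it ``immediate'' that the grafting-and-contraction compositions make $\AlgLibre_{\DAs_\gamma}$ an operad, you expand that step into the factorization $\circ_i = \kappa \circ \widehat{\circ}_i$ through plain grafting together with confluence and compatibility lemmas for the contraction map $\kappa$, a legitimate and more careful way to discharge what the paper leaves unargued.
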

\begin{proof}
    First, it is immediate that the vector space $\AlgLibre_{\DAs_\gamma}$
    endowed with the partial compositions described in the statement of
    the proposition is an operad.
    \smallskip

    Let
    \begin{equation}
        \phi : \DAs_\gamma \simeq \OpLibre\left(\GenDAs'\right)
        /_{\left\langle \RelDAs' \right\rangle} \to \AlgLibre_{\DAs_\gamma}
    \end{equation}
    be the map satisfying $\phi(\pi(\MDAs_a)) := \Cfr_a$ where $\Cfr_a$
    is the $\gamma$-alternating Schröder with two leaves and one internal
    node labeled by $a \in [\gamma]$ and
    $\pi : \OpLibre(\GenDAs') \to \DAs_\gamma$ is the canonical
    surjection map. Since we have
    $\phi(\pi(\MDAs_a)) \circ_1 \phi(\pi(\MDAs_a)) =
    \phi(\pi(\MDAs_a)) \circ_2 \phi(\pi(\MDAs_a))$
    for all $a \in [\gamma]$, $\phi$ extends in a unique way into an
    operad morphism. First, since the set $G_\gamma$ of all
    $\gamma$-alternating Schröder trees with two leaves and one internal
    node is a generating set of $\AlgLibre_{\DAs_\gamma}$ and the image
    of $\phi$ contains $G_\gamma$, $\phi$ is surjective. Second, since
    by definition of $\AlgLibre_{\DAs_\gamma}$, the bases of
    $\AlgLibre_{\DAs_\gamma}$ are indexed by $\gamma$-alternating
    Schröder trees, by Proposition~\ref{prop:serie_hilbert_das_gamma},
    $\AlgLibre_{\DAs_\gamma}$ and $\DAs_\gamma$ are isomorphic as graded
    vector spaces. Hence, $\phi$ is an operad isomorphism, showing that
    $\DAs_\gamma$ admits the claimed realization.
\end{proof}
\medskip

We have for instance in $\DAs_3$,
\begin{equation} \label{equ:exemple_composition_as_gamma_duale}
    \begin{split}
    \begin{tikzpicture}[xscale=.23,yscale=.13]
        \node[Feuille](0)at(0.00,-6.50){};
        \node[Feuille](10)at(8.00,-9.75){};
        \node[Feuille](12)at(10.00,-9.75){};
        \node[Feuille](2)at(2.00,-9.75){};
        \node[Feuille](4)at(4.00,-9.75){};
        \node[Feuille](6)at(5.00,-3.25){};
        \node[Feuille](7)at(6.00,-6.50){};
        \node[Feuille](9)at(7.00,-6.50){};
        \node[NoeudSchr](1)at(1.00,-3.25){\begin{math}1\end{math}};
        \node[NoeudSchr](11)at(9.00,-6.50){\begin{math}1\end{math}};
        \node[NoeudSchr](3)at(3.00,-6.50){\begin{math}2\end{math}};
        \node[NoeudSchr](5)at(5.00,0.00){\begin{math}2\end{math}};
        \node[NoeudSchr](8)at(7.00,-3.25){\begin{math}3\end{math}};
        \draw[Arete](0)--(1);
        \draw[Arete](1)--(5);
        \draw[Arete](10)--(11);
        \draw[Arete](11)--(8);
        \draw[Arete](12)--(11);
        \draw[Arete](2)--(3);
        \draw[Arete](3)--(1);
        \draw[Arete](4)--(3);
        \draw[Arete](6)--(5);
        \draw[Arete](7)--(8);
        \draw[Arete](8)--(5);
        \draw[Arete](9)--(8);
        \node(r)at(5.00,2.7){};
        \draw[Arete](r)--(5);
    \end{tikzpicture}
    \end{split}
    \circ_4
    \begin{split}
    \begin{tikzpicture}[xscale=.2,yscale=.17]
        \node[Feuille](0)at(0.00,-3.33){};
        \node[Feuille](2)at(2.00,-3.33){};
        \node[Feuille](4)at(4.00,-1.67){};
        \node[NoeudSchr](1)at(1.00,-1.67){\begin{math}2\end{math}};
        \node[NoeudSchr](3)at(3.00,0.00){\begin{math}3\end{math}};
        \draw[Arete](0)--(1);
        \draw[Arete](1)--(3);
        \draw[Arete](2)--(1);
        \draw[Arete](4)--(3);
        \node(r)at(3.00,2){};
        \draw[Arete](r)--(3);
    \end{tikzpicture}
    \end{split}
    =
    \begin{split}
    \begin{tikzpicture}[xscale=.2,yscale=.10]
        \node[Feuille](0)at(0.00,-8.50){};
        \node[Feuille](10)at(9.00,-8.50){};
        \node[Feuille](11)at(10.00,-8.50){};
        \node[Feuille](13)at(11.00,-8.50){};
        \node[Feuille](14)at(12.00,-12.75){};
        \node[Feuille](16)at(14.00,-12.75){};
        \node[Feuille](2)at(2.00,-12.75){};
        \node[Feuille](4)at(4.00,-12.75){};
        \node[Feuille](6)at(5.00,-12.75){};
        \node[Feuille](8)at(7.00,-12.75){};
        \node[NoeudSchr](1)at(1.00,-4.25){1};
        \node[NoeudSchr](12)at(11.00,-4.25){\begin{math}3\end{math}};
        \node[NoeudSchr](15)at(13.00,-8.50){\begin{math}1\end{math}};
        \node[NoeudSchr](3)at(3.00,-8.50){\begin{math}2\end{math}};
        \node[NoeudSchr](5)at(7.00,0.00){\begin{math}2\end{math}};
        \node[NoeudSchr](7)at(6.00,-8.50){\begin{math}2\end{math}};
        \node[NoeudSchr](9)at(8.00,-4.25){\begin{math}3\end{math}};
        \draw[Arete](0)--(1);
        \draw[Arete](1)--(5);
        \draw[Arete](10)--(9);
        \draw[Arete](11)--(12);
        \draw[Arete](12)--(5);
        \draw[Arete](13)--(12);
        \draw[Arete](14)--(15);
        \draw[Arete](15)--(12);
        \draw[Arete](16)--(15);
        \draw[Arete](2)--(3);
        \draw[Arete](3)--(1);
        \draw[Arete](4)--(3);
        \draw[Arete](6)--(7);
        \draw[Arete](7)--(9);
        \draw[Arete](8)--(7);
        \draw[Arete](9)--(5);
        \node(r)at(7.00,3.5){};
        \draw[Arete](r)--(5);
        \end{tikzpicture}
    \end{split}\,,
\end{equation}
and
\begin{equation}
    \begin{split}
    \begin{tikzpicture}[xscale=.23,yscale=.13]
        \node[Feuille](0)at(0.00,-6.50){};
        \node[Feuille](10)at(8.00,-9.75){};
        \node[Feuille](12)at(10.00,-9.75){};
        \node[Feuille](2)at(2.00,-9.75){};
        \node[Feuille](4)at(4.00,-9.75){};
        \node[Feuille](6)at(5.00,-3.25){};
        \node[Feuille](7)at(6.00,-6.50){};
        \node[Feuille](9)at(7.00,-6.50){};
        \node[NoeudSchr](1)at(1.00,-3.25){\begin{math}1\end{math}};
        \node[NoeudSchr](11)at(9.00,-6.50){\begin{math}1\end{math}};
        \node[NoeudSchr](3)at(3.00,-6.50){\begin{math}2\end{math}};
        \node[NoeudSchr](5)at(5.00,0.00){\begin{math}2\end{math}};
        \node[NoeudSchr](8)at(7.00,-3.25){\begin{math}3\end{math}};
        \draw[Arete](0)--(1);
        \draw[Arete](1)--(5);
        \draw[Arete](10)--(11);
        \draw[Arete](11)--(8);
        \draw[Arete](12)--(11);
        \draw[Arete](2)--(3);
        \draw[Arete](3)--(1);
        \draw[Arete](4)--(3);
        \draw[Arete](6)--(5);
        \draw[Arete](7)--(8);
        \draw[Arete](8)--(5);
        \draw[Arete](9)--(8);
        \node(r)at(5.00,2.7){};
        \draw[Arete](r)--(5);
    \end{tikzpicture}
    \end{split}
    \circ_5
    \begin{split}
    \begin{tikzpicture}[xscale=.2,yscale=.17]
        \node[Feuille](0)at(0.00,-3.33){};
        \node[Feuille](2)at(2.00,-3.33){};
        \node[Feuille](4)at(4.00,-1.67){};
        \node[NoeudSchr](1)at(1.00,-1.67){\begin{math}2\end{math}};
        \node[NoeudSchr](3)at(3.00,0.00){\begin{math}3\end{math}};
        \draw[Arete](0)--(1);
        \draw[Arete](1)--(3);
        \draw[Arete](2)--(1);
        \draw[Arete](4)--(3);
        \node(r)at(3.00,2){};
        \draw[Arete](r)--(3);
    \end{tikzpicture}
    \end{split}
    =
    \begin{split}
    \begin{tikzpicture}[xscale=.2,yscale=.10]
        \node[Feuille](0)at(0.00,-8.00){};
        \node[Feuille](10)at(9.00,-8.00){};
        \node[Feuille](12)at(11.00,-8.00){};
        \node[Feuille](13)at(12.00,-12.00){};
        \node[Feuille](15)at(14.00,-12.00){};
        \node[Feuille](2)at(2.00,-12.00){};
        \node[Feuille](4)at(4.00,-12.00){};
        \node[Feuille](6)at(5.00,-4.00){};
        \node[Feuille](7)at(6.00,-12.00){};
        \node[Feuille](9)at(8.00,-12.00){};
        \node[NoeudSchr](1)at(1.00,-4.00){1};
        \node[NoeudSchr](11)at(10.00,-4.00){3};
        \node[NoeudSchr](14)at(13.00,-8.00){1};
        \node[NoeudSchr](3)at(3.00,-8.00){2};
        \node[NoeudSchr](5)at(5.00,0.00){2};
        \node[NoeudSchr](8)at(7.00,-8.00){2};
        \draw[Arete](0)--(1);
        \draw[Arete](1)--(5);
        \draw[Arete](10)--(11);
        \draw[Arete](11)--(5);
        \draw[Arete](12)--(11);
        \draw[Arete](13)--(14);
        \draw[Arete](14)--(11);
        \draw[Arete](15)--(14);
        \draw[Arete](2)--(3);
        \draw[Arete](3)--(1);
        \draw[Arete](4)--(3);
        \draw[Arete](6)--(5);
        \draw[Arete](7)--(8);
        \draw[Arete](8)--(11);
        \draw[Arete](9)--(8);
        \node(r)at(5.00,3.5){};
        \draw[Arete](r)--(5);
    \end{tikzpicture}
    \end{split}\,.
\end{equation}
\medskip

\subsection{A diagram of operads}%
\label{subsec:diagramme_dias_as_dendr_gamma}
We now define morphisms between the operads $\Dias_\gamma$, $\As_\gamma$,
$\DAs_\gamma$, and $\Dendr_\gamma$ to obtain a generalization of a
classical diagram involving the diassociative, associative, and
dendriform operads.
\medskip

\subsubsection{Relating the diassociative and dendriform operads}
The diagram
\begin{equation} \label{equ:diagramme_dendr_as_dias}
    \begin{split}
    \begin{tikzpicture}[yscale=.65]
        \node(Dendr)at(0,0){\begin{math} \Dendr \end{math}};
        \node(As)at(2,0){\begin{math} \As \end{math}};
        \node(Dias)at(4,0){\begin{math} \Dias \end{math}};
        \draw[->](Dias)
            edge node[anchor=south]{\begin{math} \eta \end{math}}(As);
        \draw[->](As)
            edge node[anchor=south]{\begin{math} \zeta \end{math}}(Dendr);
        \draw[<->,dotted,loop above,looseness=13](As)
            edge node[anchor=south]{\begin{math} ! \end{math}}(As);
        \draw[<->,dotted,loop above,looseness=1.5](Dendr)
            edge node[anchor=south]{\begin{math} ! \end{math}}(Dias);
    \end{tikzpicture}
    \end{split}
\end{equation}
is a well-known diagram of operads, being a part of the so-called
{\em operadic butterfly} \cite{Lod01,Lod06} and summarizing in a nice way
the links between the dendriform, associative, and diassociative operads.
The operad $\As$, being at the center of the diagram, is it own Koszul
dual, while $\Dias$ and $\Dendr$ are Koszul dual one of the other.
\medskip

The operad morphisms $\eta : \Dias \to \As$ and $\zeta : \As \to \Dendr$
are linearly defined through the realizations of $\Dias$ and $\Dendr$
recalled respectively in Section~1.3 of~\cite{GirI} and
in Section~\ref{subsec:dendr} by
\begin{equation}\begin{split}\end{split}
    \eta(\Efr_{2, 1}) :=
    \begin{split}
    \begin{tikzpicture}[xscale=.2,yscale=.15]
        \node[Feuille](0)at(0.00,-1.50){};
        \node[Feuille](2)at(2.00,-1.50){};
        \node[NoeudCor](1)at(1.00,0.00){};
        \draw[Arete](0)--(1);
        \draw[Arete](2)--(1);
        \node(r)at(1.00,2){};
        \draw[Arete](r)--(1);
    \end{tikzpicture}
    \end{split}
     =: \eta(\Efr_{2, 2})\,,
\end{equation}
and
\begin{equation}\begin{split}\end{split}
    \zeta\left(
    \begin{split}
    \begin{tikzpicture}[xscale=.2,yscale=.15]
        \node[Feuille](0)at(0.00,-1.50){};
        \node[Feuille](2)at(2.00,-1.50){};
        \node[NoeudCor](1)at(1.00,0.00){};
        \draw[Arete](0)--(1);
        \draw[Arete](2)--(1);
        \node(r)at(1.00,2){};
        \draw[Arete](r)--(1);
    \end{tikzpicture}
    \end{split}
    \right) :=
    \begin{split}
    \begin{tikzpicture}[xscale=.22,yscale=.17]
        \node[Feuille](0)at(0.00,-3.33){};
        \node[Feuille](2)at(2.00,-3.33){};
        \node[Feuille](4)at(4.00,-1.67){};
        \node[Noeud](1)at(1.00,-1.67){};
        \node[Noeud](3)at(3.00,0.00){};
        \draw[Arete](0)--(1);
        \draw[Arete](1)--(3);
        \draw[Arete](2)--(1);
        \draw[Arete](4)--(3);
        \node(r)at(3.00,1.67){};
        \draw[Arete](r)--(3);
    \end{tikzpicture}
    \end{split}
    +
    \begin{split}
    \begin{tikzpicture}[xscale=.22,yscale=.17]
        \node[Feuille](0)at(0.00,-1.67){};
        \node[Feuille](2)at(2.00,-3.33){};
        \node[Feuille](4)at(4.00,-3.33){};
        \node[Noeud](1)at(1.00,0.00){};
        \node[Noeud](3)at(3.00,-1.67){};
        \draw[Arete](0)--(1);
        \draw[Arete](2)--(3);
        \draw[Arete](3)--(1);
        \draw[Arete](4)--(3);
        \node(r)at(1.00,1.67){};
        \draw[Arete](r)--(1);
    \end{tikzpicture}
    \end{split}\,.
\end{equation}
Since $\Dias$ is generated by $\Efr_{2, 1}$ and $\Efr_{2, 2}$, and since
$\As$ is generated by
$\raisebox{-.4em}{\begin{tikzpicture}[xscale=.2,yscale=.15]
    \node[Feuille](0)at(0.00,-1.50){};
    \node[Feuille](2)at(2.00,-1.50){};
    \node[NoeudCor](1)at(1.00,0.00){};
    \draw[Arete](0)--(1);
    \draw[Arete](2)--(1);
    \node(r)at(1.00,2){};
    \draw[Arete](r)--(1);
\end{tikzpicture}}$,
$\eta$ and $\zeta$ are wholly defined.
\medskip

\subsubsection{Relating the pluriassociative and polydendriform operads}
\begin{Proposition} \label{prop:morphisme_dias_gamma_vers_as_gamma}
    For any integer $\gamma \geq 0$, the map
    $\eta_\gamma : \Dias_\gamma \to \As_\gamma$ satisfying
    \begin{equation}
        \begin{split}\end{split}
        \eta_\gamma(0a) =
        \begin{split}
        \begin{tikzpicture}[xscale=.25,yscale=.2]
            \node[Feuille](0)at(0.00,-1.50){};
            \node[Feuille](2)at(2.00,-1.50){};
            \node[NoeudCor](1)at(1.00,0.00){\begin{math}a\end{math}};
            \draw[Arete](0)--(1);
            \draw[Arete](2)--(1);
            \node(r)at(1.00,1.7){};
            \draw[Arete](r)--(1);
        \end{tikzpicture}
        \end{split}
        = \eta_\gamma(a0),
        \qquad a \in [\gamma],
    \end{equation}
    extends in a unique way into an operad morphism. Moreover, this
    morphism is surjective.
\end{Proposition}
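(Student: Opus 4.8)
The plan is to invoke the universal property of operads presented by generators and relations. By Theorem~2.2.6 of~\cite{GirI}, $\Dias_\gamma$ admits a presentation $\left(\GenDias, \RelDias\right)$; hence to obtain a (necessarily unique) operad morphism $\eta_\gamma$ extending the prescribed values on generators, it suffices to check that the free extension $\tilde\eta_\gamma : \OpLibre\left(\GenDias\right) \to \As_\gamma$ of that assignment sends every generator of $\RelDias$ to zero. Since $\eta_\gamma$ identifies both $0a$ and $a0$ with the single arity-two $\gamma$-corolla $\Cfr_a$ labeled by $a$, this reduces to a computation with quadratic terms.

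The key fact I would record first, read off from Proposition~\ref{prop:realisation_koszulite_as_gamma}, is that in $\As_\gamma$ one has $\Cfr_a \circ_i \Cfr_{a'} = \Cfr_{a \Max a'}$ for every $a, a' \in [\gamma]$ and every valid $i$, this common value being the arity-three corolla labeled by the maximum $a \Max a'$. Consequently, the image under $\tilde\eta_\gamma$ of a quadratic term $u \circ_i v$ of $\OpLibre\left(\GenDias\right)$ depends only on the two parameters carried by $u$ and $v$, and is insensitive both to the composition position $i$ and to whether each generator is of left type ($0a$) or right type ($a0$).

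I would then run through the relations (2.2.11a)--(2.2.11g) of $\RelDias$ from~\cite{GirI} one at a time. Each is a difference $u \circ_i v - u' \circ_{i'} v'$ of two quadratic terms whose parameter pairs share the same maximum, this max-compatibility being exactly the design principle behind the pluriassociative relations. By the fact above, both terms therefore map to the same corolla $\Cfr_{a \Max a'}$, so each relation lies in $\ker \tilde\eta_\gamma$. This shows $\tilde\eta_\gamma$ annihilates $\RelDias$ and thus factors through $\Dias_\gamma$, producing $\eta_\gamma$. Surjectivity is then immediate, since the corollas $\Cfr_a$, $a \in [\gamma]$, generate $\As_\gamma$ by Proposition~\ref{prop:realisation_koszulite_as_gamma} and each is in the image by construction. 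The hard part will be the case-by-case verification of max-compatibility across all seven families of relations, but this is mechanical once their explicit form from~\cite{GirI} is in hand.
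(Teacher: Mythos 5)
Your proposal is correct and takes essentially the same route as the paper's proof: both define $\eta_\gamma$ on the arity-two generators via the presentation of $\Dias_\gamma$ from Theorem~2.2.6 of~\cite{GirI}, verify that the free extension annihilates the generating relations of $\RelDias$ so that the morphism exists and is unique, and deduce surjectivity from the fact that the $\gamma$-corollas of arity $2$ generate $\As_\gamma$. The only difference is presentational: where the paper simply asserts ``we can check that $\eta_\gamma(\pi(x)) = 0$'' relation by relation, you supply a uniform reason --- every quadratic composite of corollas in $\As_\gamma$ equals the corolla labeled by the maximum of the two parameters, independently of the composition position and of the left/right type of the generators, and both sides of each relation of $\RelDias$ carry the same maximum --- which is a correct and cleaner articulation of that verification.
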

\begin{proof}
    Theorem~2.2.6 of~\cite{GirI} and
    Proposition~\ref{prop:realisation_das_gamma} allow to interpret
    the map $\eta_\gamma$ over the presentations of $\Dias_\gamma$
    and $\As_\gamma$. Then, via this interpretation, one has
    \begin{equation}
        \eta_\gamma(\pi(\GDias_a)) = \pi'(\MAs_a) = \eta_\gamma(\pi(\DDias_a)),
        \qquad a \in [\gamma],
    \end{equation}
    where $\pi : \OpLibre\left(\GenDias\right) \to \Dias_\gamma$ and
    $\pi' : \OpLibre\left(\GenAs\right) \to \As_\gamma$ are canonical
    surjection maps. Now, for any element $x$ of $\OpLibre\left(\GenDias\right)$
    generating the space of relations $\RelDias$ of $\Dias_\gamma$, we
    can check that $\eta_\gamma(\pi(x)) = 0$. This shows that $\eta_\gamma$
    extends in a unique way into an operad morphism. Finally, this
    morphism is a surjection since its image contains the set of all
    $\gamma$-corollas of arity $2$, which is a generating set of
    $\As_\gamma$.
\end{proof}
\medskip

By Proposition~\ref{prop:morphisme_dias_gamma_vers_as_gamma}, the map
$\eta_\gamma$, whose definition is only given in arity $2$, defines an
operad morphism. Nevertheless, by induction on the arity, one can prove
that for any word $x$ of $\Dias_\gamma$, $\eta_\gamma(x)$ is the
$\gamma$-corolla of arity $|x|$ labeled by the greatest letter of $x$.
\medskip

\begin{Proposition} \label{prop:morphisme_das_gamma_vers_dendr_gamma}
    For any integer $\gamma \geq 0$, the map
    $\zeta_\gamma : \DAs_\gamma \to \Dendr_\gamma$ satisfying
    \begin{equation}
        \begin{split}\end{split}
        \zeta_\gamma\left(
        \begin{split}
        \begin{tikzpicture}[xscale=.25,yscale=.2]
            \node[Feuille](0)at(0.00,-1.50){};
            \node[Feuille](2)at(2.00,-1.50){};
            \node[NoeudSchr](1)at(1.00,0.00){\begin{math}a\end{math}};
            \draw[Arete](0)--(1);
            \draw[Arete](2)--(1);
            \node(r)at(1.00,1.7){};
            \draw[Arete](r)--(1);
        \end{tikzpicture}
        \end{split}
        \right)
        = \ArbreBinGDeux{a} + \ArbreBinDDeux{a},
        \qquad a \in [\gamma],
    \end{equation}
    extends in a unique way into an operad morphism.
\end{Proposition}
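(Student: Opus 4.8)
The plan is to carry out the proof through the presentation $\left(\GenDAs', \RelDAs'\right)$ of $\DAs_\gamma$ supplied by Proposition~\ref{prop:autre_presentation_das_gamma}, rather than through the original presentation $\left(\GenDAs, \RelDAs\right)$. Under this presentation, the $\gamma$-alternating Schröder tree with a single internal node labeled $a$ (the realization of the generator, see Proposition~\ref{prop:realisation_das_gamma}) is the generator $\MDAs_a$, and the point to exploit is that the target value prescribed in the statement is nothing but the associative element $\OpAsDendr_a = \pi(\GDendr_a + \DDendr_a)$ of $\Dendr_\gamma$ exhibited in Proposition~\ref{prop:operateur_associatif_dendr_gamma}. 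The whole argument then reduces to recognizing this fact and invoking associativity.

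First I would match the two binary trees appearing in the statement with the elements $\pi(\GDendr_a)$ and $\pi(\DDendr_a)$ of $\Dendr_\gamma$. Starting from the recursive definition of the operations on the free algebra $\AlgLibre_{\Dendr_\gamma}$ (Theorem~\ref{thm:algebre_dendr_gamma_libre}) and using the convention $a \Min \infty = a = \infty \Min a$, a one-step computation of $\NoeudTexte \GDendr_a \NoeudTexte$ and of $\NoeudTexte \DDendr_a \NoeudTexte$ identifies these two products with the two pictured trees, each carrying its single internal edge labeled $a$. Consequently the prescription of the statement reads $\zeta_\gamma(\MDAs_a) = \pi(\GDendr_a + \DDendr_a) = \OpAsDendr_a$ for every $a \in [\gamma]$. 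Since $\OpLibre(\GenDAs')$ is the free operad on $\GenDAs'$, this assignment on generators extends uniquely to an operad morphism $\OpLibre(\GenDAs') \to \Dendr_\gamma$, and the remaining task is to show that it factors through the quotient $\DAs_\gamma = \OpLibre(\GenDAs') /_{\left\langle \RelDAs' \right\rangle}$.

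This factorization holds if and only if the morphism annihilates each generator of $\RelDAs'$. By Proposition~\ref{prop:autre_presentation_das_gamma} these generators are exactly $\MDAs_a \circ_1 \MDAs_a - \MDAs_a \circ_2 \MDAs_a$ for $a \in [\gamma]$, so their images are $\OpAsDendr_a \circ_1 \OpAsDendr_a - \OpAsDendr_a \circ_2 \OpAsDendr_a$. Each of these vanishes precisely because $\OpAsDendr_a$ is associative in $\Dendr_\gamma$, which is the content of Proposition~\ref{prop:operateur_associatif_dendr_gamma}. This yields the morphism $\zeta_\gamma$, and its uniqueness is immediate since $\GenDAs'$ generates $\DAs_\gamma$.

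I expect the only genuinely delicate point to be the bookkeeping in the first step, namely correctly pairing the two drawn trees with $\GDendr_a$ and $\DDendr_a$; this is a short direct calculation from the recursive products, and once it is done the conclusion is purely formal. In particular, choosing the presentation of Proposition~\ref{prop:autre_presentation_das_gamma} is what keeps the verification trivial: its single family of relations consists of associativity relations, which map to the already-established associativity of $\OpAsDendr_a$, whereas working over the multi-term presentation of Theorem~\ref{thm:presentation_dendr_gamma} would force a much longer computation.
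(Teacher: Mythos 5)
Your proposal is correct and follows essentially the same route as the paper's proof: both work through the presentation $\left(\GenDAs', \RelDAs'\right)$ of Proposition~\ref{prop:autre_presentation_das_gamma}, identify $\zeta_\gamma(\pi(\MDAs_a))$ with the associative element $\OpAsDendr_a = \pi'(\GDendr_a + \DDendr_a)$ of Proposition~\ref{prop:operateur_associatif_dendr_gamma}, and conclude that the relations $\MDAs_a \circ_1 \MDAs_a - \MDAs_a \circ_2 \MDAs_a$ are sent to zero. Your explicit one-step computation pairing the two pictured trees with $\GDendr_a$ and $\DDendr_a$ in the free algebra merely makes precise the identification the paper invokes implicitly via Theorem~\ref{thm:autre_presentation_dendr_gamma}.
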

\begin{proof}
    Propositions~\ref{prop:autre_presentation_das_gamma}
    and~\ref{prop:realisation_das_gamma}, and
    Theorem~\ref{thm:autre_presentation_dendr_gamma} allow to interpret
    the map $\zeta_\gamma$ over the presentations of $\DAs_\gamma$ and
    $\Dendr_\gamma$. Then, via this interpretation, one has
    \begin{equation}
        \zeta_\gamma(\pi(\MDAs_a)) = \pi'(\GDendr_a + \DDendr_a),
        \qquad a \in [\gamma],
    \end{equation}
    where $\pi : \OpLibre(\GenDAs') \to \DAs_\gamma$ and
    $\pi' : \OpLibre\left(\GenDendr'\right) \to \Dendr_\gamma$ are
    canonical surjection maps. We now observe that the image of
    $\pi(\MDAs_a)$ is $\OpAsDendr_a$, where $\OpAsDendr_a$ is the element
    of $\Dendr_\gamma$ defined in the statement of
    Proposition~\ref{prop:operateur_associatif_dendr_gamma}. Then, since
    by this last proposition this element is associative, for any element
    $x$ of $\OpLibre(\GenDAs')$ generating the space of relations of
    $\RelDAs'$ of $\DAs_\gamma$, $\zeta_\gamma(\pi(x)) = 0$. This shows
    that $\zeta_\gamma$ extends in a unique way into an operad morphism.
\end{proof}
\medskip

We have to observe that the morphism $\zeta_\gamma$ defined in the
statement of Proposition~\ref{prop:morphisme_das_gamma_vers_dendr_gamma}
is injective only for $\gamma \leq 1$. Indeed, when $\gamma \geq 2$,
we have the relation
\begin{equation}\begin{split}\end{split}
    \zeta_2\left(
    \begin{split}
    \begin{tikzpicture}[xscale=.2,yscale=.2]
        \node[Feuille](0)at(0.00,-5.25){};
        \node[Feuille](2)at(2.00,-5.25){};
        \node[Feuille](4)at(4.00,-3.50){};
        \node[Feuille](6)at(6.00,-1.75){};
        \node[NoeudSchr](1)at(1.00,-3.50){\begin{math}1\end{math}};
        \node[NoeudSchr](3)at(3.00,-1.75){\begin{math}2\end{math}};
        \node[NoeudSchr](5)at(5.00,0.00){\begin{math}1\end{math}};
        \draw[Arete](0)--(1);
        \draw[Arete](1)--(3);
        \draw[Arete](2)--(1);
        \draw[Arete](3)--(5);
        \draw[Arete](4)--(3);
        \draw[Arete](6)--(5);
        \node(r)at(5.00,1.75){};
        \draw[Arete](r)--(5);
    \end{tikzpicture}
    \end{split}\right)
    \begin{split} \; + \; \end{split}
    \zeta_2\left(
    \begin{split}
    \begin{tikzpicture}[xscale=.2,yscale=.2]
        \node[Feuille](0)at(0.00,-1.75){};
        \node[Feuille](2)at(2.00,-3.50){};
        \node[Feuille](4)at(4.00,-5.25){};
        \node[Feuille](6)at(6.00,-5.25){};
        \node[NoeudSchr](1)at(1.00,0.00){\begin{math}1\end{math}};
        \node[NoeudSchr](3)at(3.00,-1.75){\begin{math}2\end{math}};
        \node[NoeudSchr](5)at(5.00,-3.50){\begin{math}1\end{math}};
        \draw[Arete](0)--(1);
        \draw[Arete](2)--(3);
        \draw[Arete](3)--(1);
        \draw[Arete](4)--(5);
        \draw[Arete](5)--(3);
        \draw[Arete](6)--(5);
        \node(r)at(1.00,1.75){};
        \draw[Arete](r)--(1);
    \end{tikzpicture}
    \end{split}\right)
    \begin{split} \enspace = \enspace \end{split}
    \zeta_2\left(
    \begin{split}
    \begin{tikzpicture}[xscale=.25,yscale=.2]
        \node[Feuille](0)at(0.00,-2.00){};
        \node[Feuille](2)at(1.00,-4.00){};
        \node[Feuille](4)at(3.00,-4.00){};
        \node[Feuille](5)at(4.00,-2.00){};
        \node[NoeudSchr](1)at(2.00,0.00){\begin{math}1\end{math}};
        \node[NoeudSchr](3)at(2.00,-2.00){\begin{math}2\end{math}};
        \draw[Arete](0)--(1);
        \draw[Arete](2)--(3);
        \draw[Arete](3)--(1);
        \draw[Arete](4)--(3);
        \draw[Arete](5)--(1);
        \node(r)at(2.00,2.00){};
        \draw[Arete](r)--(1);
    \end{tikzpicture}
    \end{split}\right)
    \begin{split} \; + \; \end{split}
    \zeta_2\left(
    \begin{split}
    \begin{tikzpicture}[xscale=.22,yscale=.17]
        \node[Feuille](0)at(0.00,-4.67){};
        \node[Feuille](2)at(2.00,-4.67){};
        \node[Feuille](4)at(4.00,-4.67){};
        \node[Feuille](6)at(6.00,-4.67){};
        \node[NoeudSchr](1)at(1.00,-2.33){\begin{math}1\end{math}};
        \node[NoeudSchr](3)at(3.00,0.00){\begin{math}2\end{math}};
        \node[NoeudSchr](5)at(5.00,-2.33){\begin{math}1\end{math}};
        \draw[Arete](0)--(1);
        \draw[Arete](1)--(3);
        \draw[Arete](2)--(1);
        \draw[Arete](4)--(5);
        \draw[Arete](5)--(3);
        \draw[Arete](6)--(5);
        \node(r)at(3.00,2.33){};
        \draw[Arete](r)--(3);
    \end{tikzpicture}
    \end{split}\right)\,.
\end{equation}
\medskip

\begin{Theoreme} \label{thm:diagramme_dias_as_das_dendr_gamma}
    For any integer $\gamma \geq 0$, the operads $\Dias_\gamma$,
    $\Dendr_\gamma$, $\As_\gamma$, and $\DAs_\gamma$ fit into the
    diagram
    \begin{equation} \label{equ:diagramme_dendr_gamma_as_gamma_dias_gamma}
        \begin{split}
        \begin{tikzpicture}[yscale=.6]
            \node(Dendr)at(0,0){\begin{math} \Dendr_\gamma \end{math}};
            \node(DAs)at(2,0){\begin{math} \DAs_\gamma \end{math}};
            \node(As)at(4,0){\begin{math} \As_\gamma \end{math}};
            \node(Dias)at(6,0){\begin{math} \Dias_\gamma \end{math}};
            \draw[->](Dias)
                edge node[anchor=south]{\begin{math} \eta_\gamma \end{math}}(As);
            \draw[<->,dotted](As)
                edge node[anchor=south]{\begin{math} ! \end{math}}(DAs);
            \draw[->](DAs)
                edge node[anchor=south]{\begin{math} \zeta_\gamma \end{math}}(Dendr);
            \draw[<->,dotted,loop above,looseness=.9](Dendr)
                edge node[anchor=south]{\begin{math} ! \end{math}}(Dias);
        \end{tikzpicture}
        \end{split}\,,
    \end{equation}
    where $\eta_\gamma$ is the surjection defined in the statement
    of Proposition~\ref{prop:morphisme_dias_gamma_vers_as_gamma}
    and $\zeta_\gamma$ is the operad morphism defined in the statement
    of Proposition~\ref{prop:morphisme_das_gamma_vers_dendr_gamma}.
\end{Theoreme}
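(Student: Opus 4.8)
The plan is to assemble the diagram from the structural results already established and then to check that its two halves are genuine mirror images under Koszul duality, which is what turns~\eqref{equ:diagramme_dendr_gamma_as_gamma_dias_gamma} into a faithful analogue of the operadic butterfly~\eqref{equ:diagramme_dendr_as_dias}. The two dotted arrows demand nothing new: $\Dendr_\gamma$ is by definition the Koszul dual of $\Dias_\gamma$ (Theorem~\ref{thm:presentation_dendr_gamma}), and $\DAs_\gamma$ is by definition the Koszul dual of $\As_\gamma$ (Proposition~\ref{prop:presentation_as_gamma_duale}). The two solid arrows are supplied directly: $\eta_\gamma : \Dias_\gamma \to \As_\gamma$ is the surjective operad morphism of Proposition~\ref{prop:morphisme_dias_gamma_vers_as_gamma}, and $\zeta_\gamma : \DAs_\gamma \to \Dendr_\gamma$ is the operad morphism of Proposition~\ref{prop:morphisme_das_gamma_vers_dendr_gamma}. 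Hence the underlying shape of the diagram is immediate, and it remains only to justify the coherence between the arrows.

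For this, I would show that $\zeta_\gamma$ is the Koszul dual morphism $\eta_\gamma^\star$. Recall that a morphism of binary quadratic operads, given on generators by a linear map $\phi$ whose free-operad extension carries $\RelDias$ into $\RelAs$, dualizes, by passing to annihilators for the scalar product of Section~\ref{subsec:dual_de_Koszul}, to a morphism of the Koszul duals induced by the transpose $\phi^\star$, which satisfies $\phi^\star\!\left(\RelAs^\perp\right) \subseteq \RelDias^\perp$, that is $\phi^\star(\RelDAs) \subseteq \RelDendr$. Reading $\eta_\gamma$ on generators as $\GDias_a, \DDias_a \mapsto \MAs_a$ for all $a \in [\gamma]$ (Proposition~\ref{prop:morphisme_dias_gamma_vers_as_gamma}) and using the dual bases $\GDendrA_a = (\GDias_a)^\star$, $\DDendrA_a = (\DDias_a)^\star$, and $\MDAsA_a = (\MAs_a)^\star$ coming from the identifications of Theorem~\ref{thm:presentation_dendr_gamma} and Proposition~\ref{prop:presentation_as_gamma_duale}, the transpose sends $\MDAsA_a$ to $(\GDias_a)^\star + (\DDias_a)^\star = \GDendrA_a + \DDendrA_a$. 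Passing to the bases $\GenDAs'$ and $\GenDendr'$ via~\eqref{equ:definition_operateur_das_gamma}, \eqref{equ:definition_operateur_dendr_gauche}, and~\eqref{equ:definition_operateur_dendr_droite}, this is exactly the assignment $\MDAs_a \mapsto \GDendr_a + \DDendr_a$ identified in the proof of Proposition~\ref{prop:morphisme_das_gamma_vers_dendr_gamma}, so that $\zeta_\gamma = \eta_\gamma^\star$.

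The main obstacle is purely bookkeeping: one must keep the two generating families of $\Dendr_\gamma$ (the $\GDendrA_a, \DDendrA_a$ of $\GenDendr$ versus the $\GDendr_a, \DDendr_a$ of $\GenDendr'$) and the two of $\DAs_\gamma$ ($\GenDAs$ versus $\GenDAs'$) carefully distinguished, and be mindful of the sign that the scalar product attaches to $\circ_2$. That sign enters when the annihilators $\RelDias^\perp = \RelDendr$ and $\RelAs^\perp = \RelDAs$ are formed, but, since these nonsymmetric operads are dualized on the plain linear dual of the arity-two generator space, it does not perturb the transpose computed above. Once the match $\zeta_\gamma = \eta_\gamma^\star$ is confirmed, all four arrows of~\eqref{equ:diagramme_dendr_gamma_as_gamma_dias_gamma} are coherent and the theorem follows.
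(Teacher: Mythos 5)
Your proof is correct, and its first paragraph is precisely the paper's argument: the theorem asserts nothing beyond the existence of the four arrows, so the paper disposes of it in one line as a direct consequence of Propositions~\ref{prop:morphisme_dias_gamma_vers_as_gamma} and~\ref{prop:morphisme_das_gamma_vers_dendr_gamma}, the two dotted arrows holding by definition of $\Dendr_\gamma$ and $\DAs_\gamma$ as Koszul duals. Everything after that in your write-up proves more than the statement requires: the diagram is a chain with no commutation constraint, so no ``coherence'' between $\eta_\gamma$ and $\zeta_\gamma$ actually needs checking. That said, your extra verification that $\zeta_\gamma = \eta_\gamma^\star$ is sound: since $\eta_\gamma$ is induced on generators by $\GDias_a, \DDias_a \mapsto \MAs_a$, its free extension in arity $3$ is block-diagonal with respect to the $\circ_1/\circ_2$ decomposition, so the sign in the pairing cancels on each block and the transpose is the free extension of the transposed generator map, sending $\MDAsA_a = \MAs_a^\star$ to $\GDendrA_a + \DDendrA_a$; summing over $a \in [b]$ recovers $\MDAs_b \mapsto \GDendr_b + \DDendr_b$, which is exactly the assignment defining $\zeta_\gamma$ in Proposition~\ref{prop:morphisme_das_gamma_vers_dendr_gamma}, and the containment $\eta_\gamma^\star(\RelAs^\perp) \subseteq \RelDias^\perp$ follows formally from $\eta_\gamma(\RelDias) \subseteq \RelAs$. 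This strengthens the analogy with the classical diagram~\eqref{equ:diagramme_dendr_as_dias}, where $\zeta$ is likewise dual to $\eta$, and is a worthwhile observation --- but it is optional for the theorem as stated.
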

\begin{proof}
    This is a direct consequence of
    Propositions~\ref{prop:morphisme_dias_gamma_vers_as_gamma}
    and~\ref{prop:morphisme_das_gamma_vers_dendr_gamma}.
\end{proof}
\medskip

Diagram~\eqref{equ:diagramme_dendr_gamma_as_gamma_dias_gamma} is a
generalization of \eqref{equ:diagramme_dendr_as_dias} in which the
associative operad split into operads $\As_\gamma$ and $\DAs_\gamma$.
\medskip

\section{Further generalizations}%
\label{sec:generalisation_supplementaires}
In this last section, we propose some generalizations on a nonnegative
integer parameter of well-known operads. For this, we use similar tools
as the ones used in the first sections of this paper.
\medskip

\subsection{Duplicial operad}
We construct here a generalization on a nonnegative integer parameter of
the duplicial operad and describe the free algebras over one generator
in the category encoded by this generalization.
\medskip

\subsubsection{Multiplicial operads}%
\label{subsub:dup_gamma}
It is well-known~\cite{LV12} that the dendriform operad and the duplicial
operad $\Dup$~\cite{Lod08} are both specializations of a same operad
$\DupDendr_q$ with one parameter $q \in \K$. This operad  admits the
presentation
$\left(\GenLibre_{\DupDendr_q}, \RelLibre_{\DupDendr_q}\right)$, where
$\GenLibre_{\DupDendr_q} := \GenLibre_\Dendr$ and
$\RelLibre_{\DupDendr_q}$ is the vector space generated by
\begin{subequations}
\begin{equation}
    \GDendr \circ_1 \DDendr - \DDendr \circ_2 \GDendr,
\end{equation}
\begin{equation}
    \GDendr \circ_1 \GDendr - \GDendr \circ_2 \GDendr
    - q \GDendr \circ_2 \DDendr,
\end{equation}
\begin{equation}
    q \DDendr \circ_1 \GDendr + \DDendr \circ_1 \DDendr
     - \DDendr \circ_2 \DDendr.
\end{equation}
\end{subequations}
One can observe that $\DupDendr_1$ is the dendriform operad and that
$\DupDendr_0$ is the duplicial operad.
\medskip

On the basis of this observation, from the presentation of $\Dendr_\gamma$
provided by Theorem~\ref{thm:autre_presentation_dendr_gamma} and its
concise form provided by Relations~\eqref{equ:relation_dendr_gamma_1_concise},
\eqref{equ:relation_dendr_gamma_2_concise},
and~\eqref{equ:relation_dendr_gamma_3_concise} for its space of relations,
we define the operad $\DupDendr_{q, \gamma}$ with two parameters, an
integer $\gamma \geq 0$ and $q \in \K$, in the following way. We set
$\DupDendr_{q, \gamma}$ as the operad admitting the presentation
$\left(\GenLibre_{\DupDendr_{q, \gamma}},
\RelLibre_{\DupDendr_{q, \gamma}}\right)$,
where $\GenLibre_{\DupDendr_{q, \gamma}} := \GenDendr'$ and
$\RelLibre_{\DupDendr_{q, \gamma}}$ is the vector space generated by
\begin{subequations}
\begin{equation} \label{equ:dupdendr_gamma_1}
    \GDendr_a \circ_1 \DDendr_{a'} - \DDendr_{a'} \circ_2 \GDendr_a,
    \qquad a, a' \in [\gamma],
\end{equation}
\begin{equation} \label{equ:dupdendr_gamma_2}
    \GDendr_a \circ_1 \GDendr_{a'}
    - \GDendr_{a \Min a'} \circ_2 \GDendr_a
    - q\GDendr_{a \Min a'} \circ_2 \DDendr_{a'},
    \qquad a, a' \in [\gamma],
\end{equation}
\begin{equation} \label{equ:dupdendr_gamma_3}
    q\DDendr_{a \Min a'} \circ_1 \GDendr_{a'}
    + \DDendr_{a \Min a'} \circ_1 \DDendr_a
    - \DDendr_a \circ_2 \DDendr_{a'},
    \qquad a, a' \in [\gamma].
\end{equation}
\end{subequations}
One can observe that $\DupDendr_{1, \gamma}$ is the operad $\Dendr_\gamma$.
\medskip

Let us define the operad $\Dup_\gamma$, called
{\em $\gamma$-multiplicial operad}, as the operad $\DupDendr_{0, \gamma}$.
By using respectively the symbols $\GDup_a$ and $\DDup_a$ instead of
$\GDendr_a$ and $\DDendr_a$ for all $a \in [\gamma]$, we obtain that the
space of relations $\RelDup$ of $\Dup_\gamma$ is generated by
\begin{subequations}
\begin{equation} \label{equ:relation_dup_gamma_1}
    \GDup_a \circ_1 \DDup_{a'} - \DDup_{a'} \circ_2 \GDup_a,
    \qquad a, a' \in [\gamma],
\end{equation}
\begin{equation} \label{equ:relation_dup_gamma_2}
    \GDup_a \circ_1 \GDup_{a'} - \GDup_{a \Min a'} \circ_2 \GDup_a,
    \qquad a, a' \in [\gamma],
\end{equation}
\begin{equation} \label{equ:relation_dup_gamma_3}
    \DDup_{a \Min a'} \circ_1 \DDup_a - \DDup_a \circ_2 \DDup_{a'},
    \qquad a, a' \in [\gamma].
\end{equation}
\end{subequations}
We denote by $\GenDup$ the set of generators
$\{\GDup_a, \DDup_a : a \in [\gamma] \}$ of $\Dup_\gamma$.
\medskip

In order to establish some properties of $\Dup_\gamma$, let us consider
the quadratic rewrite rule $\Recr_\gamma$ on $\OpLibre(\GenDup)$
satisfying
\begin{subequations}
\begin{equation}
    \GDup_a \circ_1 \DDup_{a'}
    \enspace \Recr_\gamma \enspace
    \DDup_{a'} \circ_2 \GDup_a,
    \qquad a, a' \in [\gamma],
\end{equation}
\begin{equation}
    \GDup_a \circ_1 \GDup_{a'}
    \enspace \Recr_\gamma \enspace
    \GDup_{a \Min a'} \circ_2 \GDup_a,
    \qquad a, a' \in [\gamma],
\end{equation}
\begin{equation}
    \DDup_a \circ_2 \DDup_{a'}
    \enspace \Recr_\gamma \enspace
    \DDup_{a \Min a'} \circ_1 \DDup_a,
    \qquad a, a' \in [\gamma].
\end{equation}
\end{subequations}
Observe that the space induced by the operad congruence induced by
$\Recr_\gamma$ is $\RelDup$.
\medskip

\begin{Lemme} \label{lem:dup_gamma_reecriture}
    For any integer $\gamma \geq 0$, the rewrite rule $\Recr_\gamma$ is
    convergent and the generating series $\Gca_\gamma(t)$ of its normal
    forms counted by arity satisfies
    \begin{equation} \label{equ:dup_gamma_serie_gen_formes_normales}
        \Gca_\gamma(t) =
        t + 2\gamma t \, \Gca_\gamma(t) + \gamma^2 t \, \Gca_\gamma(t)^2.
    \end{equation}
\end{Lemme}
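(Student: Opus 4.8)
The plan is to treat $\Recr_\gamma$ as an operadic rewriting system and to establish convergence in the two standard steps, termination and local confluence, before reading off and counting the normal forms. Once termination is known, Newman's lemma reduces confluence to local confluence, so that convergence follows as soon as every critical branching is shown to be joinable; the functional equation for $\Gca_\gamma(t)$ is then obtained from an explicit description of the irreducible trees of $\OpLibre(\GenDup)$.

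For termination I would attach to each syntax tree $\Tfr$ of $\OpLibre(\GenDup)$ the pair of nonnegative integers $(s_1(\Tfr), s_2(\Tfr))$, where $s_1(\Tfr)$ is the sum, over all internal nodes of $\Tfr$ labelled by some $\GDup_a$, of the number of internal nodes contained in their left subtrees, and $s_2(\Tfr)$ is the mirror statistic, summing over the $\DDup_a$-labelled nodes the number of internal nodes contained in their right subtrees. A direct local computation on the three rules shows that the first two rules strictly decrease $s_1$ (a $\GDup$-node loses internal mass from its left subtree, and the net change is a strict decrease), whereas the third rule rearranges only $\DDup$-labelled nodes, hence leaves $s_1$ unchanged and strictly decreases $s_2$ by the symmetric argument. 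Consequently every rule strictly lowers $(s_1, s_2)$ for the lexicographic order; as these statistics are bounded below, $\Recr_\gamma$ terminates.

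The core of the argument, and the step I expect to be the main obstacle, is local confluence. Every critical branching of a quadratic rule sits in arity $4$, when a single internal node plays simultaneously the lower role in one redex and the upper role in another; inspecting the left-hand sides $\GDup_\bullet \circ_1 \GDup_\bullet$, $\GDup_\bullet \circ_1 \DDup_\bullet$, and $\DDup_\bullet \circ_2 \DDup_\bullet$ shows that these overlaps fall into three families, namely a left chain of two $\GDup$-nodes over a third node, a $\GDup$-node over a $\DDup$-over-$\DDup$ right chain, and a right chain of three $\DDup$-nodes. For each family I would rewrite the branching along both edges and verify that the two outcomes reduce to a common tree: the underlying shapes match automatically, and the subscripts coincide precisely because $\Min$ is associative, commutative, and idempotent, so that iterated minima such as $a \Min a' \Min a''$ are unambiguous. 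By Newman's lemma this yields the convergence of $\Recr_\gamma$.

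Finally I would characterize the normal forms: a tree is irreducible exactly when no $\GDup$-node has an internal node as left child and no $\DDup$-node has a $\DDup$-labelled internal node as right child. Splitting an irreducible tree according to the label of its root then gives the series directly: a leaf contributes $t$; a $\GDup_a$-rooted normal form has a leaf as left subtree and an arbitrary normal form as right subtree, contributing $\gamma t\, \Gca_\gamma(t)$; and a $\DDup_a$-rooted one has an arbitrary normal form on the left and, on the right, either a leaf or a $\GDup$-rooted normal form, contributing $\gamma\, \Gca_\gamma(t)\,\bigl(t + \gamma t\, \Gca_\gamma(t)\bigr)$. Summing these three contributions over the $\gamma$ available labels in each case produces $\Gca_\gamma(t) = t + 2\gamma t\, \Gca_\gamma(t) + \gamma^2 t\, \Gca_\gamma(t)^2$, which is exactly~\eqref{equ:dup_gamma_serie_gen_formes_normales}.
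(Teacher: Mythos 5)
Your proposal is correct and takes essentially the same route as the paper's proof: termination via a lexicographically monotone pair of integer node statistics (yours strictly decreases where the paper's strictly increases, a mirror image of the same idea), confluence via Newman's diamond lemma applied to the critical branchings on three internal nodes (your three families cover exactly the paper's four critical trees, two of them merged), and the identical characterization of normal forms. Your final count $t + \gamma t\,\Gca_\gamma(t) + \gamma\,\Gca_\gamma(t)\bigl(t + \gamma t\,\Gca_\gamma(t)\bigr)$ is precisely the paper's system $\Gca'_\gamma(t) = t + \gamma t\,\Gca_\gamma(t)$ and $\Gca_\gamma(t) = \Gca'_\gamma(t) + \gamma\,\Gca_\gamma(t)\,\Gca'_\gamma(t)$ after substitution.
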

\begin{proof}
    Let us first prove that $\Recr_\gamma$ is terminating. Consider the
    map $\phi : \OpLibre(\GenDup) \to \EnsNat^2$ defined, for any syntax
    tree $\Tfr$ by $\phi(\Tfr) := (\alpha + \alpha', \beta)$, where
    $\alpha$ (resp. $\alpha'$, $\beta$) is the sum, for all internal
    nodes of $\Tfr$ labeled by $\GDup_a$ (resp. $\DDup_a$, $\DDup_a$),
    $a \in [\gamma]$, of the number of internal nodes in its right
    (resp. left, right) subtree. For the lexicographical order $\leq$ on
    $\EnsNat^2$, we can check that for all $\Recr_\gamma$-rewritings
    $\Sfr \Recr_\gamma \Tfr$ where $\Sfr$ and $\Tfr$ are syntax trees
    with two internal nodes, we have $\phi(\Sfr) \ne \phi(\Tfr)$ and
    $\phi(\Sfr) \leq \phi(\Tfr)$. This implies that any syntax tree
    $\Tfr$ obtained by a sequence of $\Recr_\gamma$-rewritings from a
    syntax tree $\Sfr$ satisfies $\phi(\Sfr) \ne \phi(\Tfr)$ and
    $\phi(\Sfr) \leq \phi(\Tfr)$. Then, since the set of syntax trees of
    $\OpLibre(\GenDup)$ of a fixed arity is finite, this shows that
    $\Recr_\gamma$ is a terminating rewrite rule.
    \smallskip

    Let us now prove that $\Recr_\gamma$ is convergent. We call
    {\em critical tree} any syntax tree $\Sfr$ with three internal nodes
    that can be rewritten by $\Recr_\gamma$ into two different trees $\Tfr$
    and $\Tfr'$. The pair $(\Tfr, \Tfr')$ is a {\em critical pair} for
    $\Recr_\gamma$. Critical trees for $\Recr_\gamma$ are, for all
    $a, b, c \in [\gamma]$,
    \begin{equation} \label{equ:dup_gamma_arbres_critiques}
        \begin{split}
        \begin{tikzpicture}[xscale=.4,yscale=.3]
            \node(0)at(0.00,-3.50){};
            \node(2)at(2.00,-5.25){};
            \node(4)at(4.00,-5.25){};
            \node(6)at(6.00,-1.75){};
            \node(1)at(1.00,-1.75){\begin{math}\DDup_a\end{math}};
            \node(3)at(3.00,-3.50){\begin{math}\DDup_b\end{math}};
            \node(5)at(5.00,0.00){\begin{math}\GDup_c\end{math}};
            \draw(0)--(1);\draw(1)--(5);\draw(2)--(3);\draw(3)--(1);
            \draw(4)--(3);\draw(6)--(5);
            \node(r)at(5.00,1.75){};
            \draw(r)--(5);
        \end{tikzpicture}
        \end{split}\,,
        \quad
        \begin{split}
        \begin{tikzpicture}[xscale=.4,yscale=.3]
            \node(0)at(0.00,-5.25){};
            \node(2)at(2.00,-5.25){};
            \node(4)at(4.00,-3.50){};
            \node(6)at(6.00,-1.75){};
            \node(1)at(1.00,-3.50){\begin{math}\DDup_a\end{math}};
            \node(3)at(3.00,-1.75){\begin{math}\GDup_b\end{math}};
            \node(5)at(5.00,0.00){\begin{math}\GDup_c\end{math}};
            \draw(0)--(1);\draw(1)--(3);\draw(2)--(1);\draw(3)--(5);
            \draw(4)--(3);\draw(6)--(5);
            \node(r)at(5.00,1.75){};
            \draw(r)--(5);
        \end{tikzpicture}
        \end{split}\,,
        \quad
        \begin{split}
        \begin{tikzpicture}[xscale=.4,yscale=.3]
            \node(0)at(0.00,-5.25){};
            \node(2)at(2.00,-5.25){};
            \node(4)at(4.00,-3.50){};
            \node(6)at(6.00,-1.75){};
            \node(1)at(1.00,-3.50){\begin{math}\GDup_a\end{math}};
            \node(3)at(3.00,-1.75){\begin{math}\GDup_b\end{math}};
            \node(5)at(5.00,0.00){\begin{math}\GDup_c\end{math}};
            \draw(0)--(1);\draw(1)--(3);\draw(2)--(1);\draw(3)--(5);
            \draw(4)--(3);\draw(6)--(5);
            \node(r)at(5.00,1.75){};
            \draw(r)--(5);
        \end{tikzpicture}
        \end{split}\,,
        \quad
        \begin{split}
        \begin{tikzpicture}[xscale=.4,yscale=.3]
            \node(0)at(0.00,-1.75){};
            \node(2)at(2.00,-3.50){};
            \node(4)at(4.00,-5.25){};
            \node(6)at(6.00,-5.25){};
            \node(1)at(1.00,0.00){\begin{math}\DDup_a\end{math}};
            \node(3)at(3.00,-1.75){\begin{math}\DDup_b\end{math}};
            \node(5)at(5.00,-3.50){\begin{math}\DDup_c\end{math}};
            \draw(0)--(1);\draw(2)--(3);\draw(3)--(1);\draw(4)--(5);
            \draw(5)--(3);\draw(6)--(5);
            \node(r)at(1.00,1.75){};
            \draw(r)--(1);
        \end{tikzpicture}
        \end{split}\,.
    \end{equation}
    Since $\Recr_\gamma$ is terminating, by the diamond lemma~\cite{New42}
    (see also~\cite{BN98}), to prove that $\Recr_\gamma$ is confluent,
    it is enough to check that for any critical tree $\Sfr$, there is a
    normal form $\Rfr$ of $\Recr_\gamma$ such that
    $\Sfr \Recr_\gamma \Tfr \overset{*}{\Recr_\gamma} \Rfr$ and
    $\Sfr \Recr_\gamma \Tfr' \overset{*}{\Recr_\gamma} \Rfr$, where
    $(\Tfr, \Tfr')$ is a critical pair. This can be done by hand for each
    of the critical trees depicted in~\eqref{equ:dup_gamma_arbres_critiques}.
    \smallskip

    Let us finally prove that the generating series of the normal forms
    of $\Recr_\gamma$ is~\eqref{equ:dup_gamma_serie_gen_formes_normales}.
    Since $\Recr_\gamma$ is terminating, its normal forms are the
    syntax trees that have no partial subtree equal to
    $\GDup_a \circ_1 \DDup_{a'}$, $\GDup_a \circ_1 \GDup_{a'}$, or
    $\DDup_a \circ_2 \DDup_{a'}$ for all $a, a' \in [\gamma]$. Then,
    the normal forms of $\Recr_\gamma$ are the syntax trees wherein any
    internal node labeled by $\GDup_a$, $a \in [\gamma]$, has a leaf as
    left child and any internal node labeled by $\DDup_a$, $a \in [\gamma]$,
    has a leaf or an internal node labeled by $\GDup_{a'}$,
    $a' \in [\gamma]$, as right child. Therefore, by denoting by
    $\Gca'_\gamma(t)$ the generating series of the normal forms of
    $\Recr_\gamma$ equal to the leaf or with a root labeled by $\GDup_a$,
    $a \in [\gamma]$, we obtain
    \begin{equation}
        \Gca'_\gamma(t) = t + \gamma t \, \Gca_\gamma(t)
    \end{equation}
    and
    \begin{equation}
        \Gca_\gamma(t) =
        \Gca'_\gamma(t) + \gamma \, \Gca_\gamma(t) \Gca'_\gamma(t).
    \end{equation}
    An elementary computation shows that $\Gca(t)$
    satisfies~\eqref{equ:dup_gamma_serie_gen_formes_normales}.
\end{proof}
\medskip

\begin{Proposition} \label{prop:proprietes_dup_gamma}
    For any integer $\gamma \geq 0$, the operad $\Dup_\gamma$ is Koszul
    and for any integer $n \geq 1$, $\Dup_\gamma(n)$ is the vector space
    of $\gamma$-edge valued binary trees with $n$ internal nodes.
\end{Proposition}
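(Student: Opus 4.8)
The plan is to derive both assertions from the convergent rewrite rule $\Recr_\gamma$ provided by Lemma~\ref{lem:dup_gamma_reecriture}, following the same strategy as in the proof of Proposition~\ref{prop:realisation_koszulite_as_gamma}. First I would recall that, by Lemma~\ref{lem:dup_gamma_reecriture}, the rule $\Recr_\gamma$ is quadratic and convergent and that the operad congruence it induces on $\OpLibre(\GenDup)$ is exactly $\RelDup$. By the Koszulity criterion reformulated in Section~1.2.5 of~\cite{GirI} (the rewriting, or Poincaré-Birkhoff-Witt, method of~\cite{Hof10,DK10,LV12}), the existence of such a convergent quadratic rewrite rule implies simultaneously that $\Dup_\gamma$ is Koszul and that the set of normal forms of $\Recr_\gamma$ forms a Poincaré-Birkhoff-Witt basis of $\Dup_\gamma$. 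This settles the Koszulity and provides the combinatorial basis to be counted.

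Next, since the normal forms of $\Recr_\gamma$ form a basis of $\Dup_\gamma$, the Hilbert series $\Hca_{\Dup_\gamma}(t)$ is equal to the generating series $\Gca_\gamma(t)$ of these normal forms counted by arity. By Lemma~\ref{lem:dup_gamma_reecriture}, $\Gca_\gamma(t)$ satisfies~\eqref{equ:dup_gamma_serie_gen_formes_normales}, which is word for word the functional equation~\eqref{equ:serie_hilbert_dendr_gamma} satisfied by $\Hca_{\Dendr_\gamma}(t)$ in Proposition~\ref{prop:serie_hilbert_dendr_gamma}. Since every term on the right-hand side of this equation carries a factor $t$, it determines each coefficient uniquely by a recurrence on the degree, so it has a single formal power series solution; I would conclude that $\Hca_{\Dup_\gamma}(t) = \Hca_{\Dendr_\gamma}(t)$, and hence that $\dim \Dup_\gamma(n) = \gamma^{n - 1} \Cat(n)$ for all $n \geq 1$.

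Finally, the dimension $\gamma^{n - 1} \Cat(n)$ is exactly the number of $\gamma$-edge valued binary trees with $n$ internal nodes, as already recorded for $\Dendr_\gamma$ after Proposition~\ref{prop:serie_hilbert_dendr_gamma} (there are $\Cat(n)$ tree shapes and $n - 1$ internal edges, each receiving a label in $[\gamma]$). Hence $\Dup_\gamma(n)$, having this dimension, can be seen as the vector space of $\gamma$-edge valued binary trees with $n$ internal nodes, exactly as for $\Dendr_\gamma$. If one wishes to index the Poincaré-Birkhoff-Witt basis explicitly by such trees rather than only matching dimensions, this can be read off from the description of the $\Recr_\gamma$-normal forms obtained inside the proof of Lemma~\ref{lem:dup_gamma_reecriture}, where each $\GDup_a$-node has a leaf as left child and each $\DDup_a$-node has a leaf or a $\GDup_{a'}$-node as right child.

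I expect no serious obstacle here: all the combinatorial effort is concentrated in Lemma~\ref{lem:dup_gamma_reecriture} (termination, confluence via the diamond lemma on the critical trees, and the count of the normal forms), which is already in place. Granted that lemma, the Koszulity follows from the rewriting criterion, and the dimension formula follows from the coincidence of the functional equations~\eqref{equ:dup_gamma_serie_gen_formes_normales} and~\eqref{equ:serie_hilbert_dendr_gamma}; the only point demanding a little care is the matching of the two series, which rests solely on the uniqueness of the formal power series solution of that equation.
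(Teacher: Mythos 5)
Your proposal is correct and follows essentially the same route as the paper: Koszulity and the combinatorial basis are obtained from the convergent quadratic rewrite rule of Lemma~\ref{lem:dup_gamma_reecriture} via the rewriting criterion, and the identification of $\Dup_\gamma(n)$ with $\gamma$-edge valued binary trees comes from matching the functional equation~\eqref{equ:dup_gamma_serie_gen_formes_normales} with~\eqref{equ:serie_hilbert_dendr_gamma}. Your explicit appeal to the uniqueness of the formal power series solution merely makes precise a step the paper leaves implicit.
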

\begin{proof}
    Since the space induced by the operad congruence induced by
    $\Recr_\gamma$ is $\RelDup$, and since by
    Lemma~\ref{lem:dup_gamma_reecriture}, $\Recr_\gamma$ is convergent,
    by the Koszulity criterion~\cite{Hof10,DK10,LV12} we have reformulated
    in Section~1.2.5 of~\cite{GirI}, $\Dup_\gamma$
    is a Koszul operad. Moreover, again because $\Recr_\gamma$ is
    convergent, as a vector space, $\Dup_\gamma(n)$ is isomorphic to the
    vector space of the normal forms of $\Recr_\gamma$ with $n \geq 1$
    internal nodes. Since the generating series $\Gca_\gamma(t)$ of the
    normal forms of $\Recr_\gamma$ is also the generating series of
    $\gamma$-edge valued binary trees (see
    Proposition~\ref{prop:serie_hilbert_dendr_gamma}), the second part
    of the statement of the proposition follows.
\end{proof}
\medskip

Since Proposition~\ref{prop:proprietes_dup_gamma} shows that the operads
$\Dup_\gamma$ and $\Dendr_\gamma$ have the same underlying vector space,
asking if these two operads are isomorphic is natural. Next result implies
that this is not the case.
\medskip

\begin{Proposition}%
\label{prop:description_operateurs_associatifs_dup_gamma}
    For any integer $\gamma \geq 0$, any associative element of
    $\Dup_\gamma$ is proportional to $\pi(\GDup_a)$ or $\pi(\DDup_a)$
    for an $a \in [\gamma]$, where
    $\pi : \OpLibre\left(\GenDup\right) \to \Dup_\gamma$ is the canonical
    surjection map.
\end{Proposition}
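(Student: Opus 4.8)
The plan is to mimic the proof of Proposition~\ref{prop:description_operateurs_associatifs_dendr_gamma}, exploiting the convergent rewrite rule $\Recr_\gamma$ of Lemma~\ref{lem:dup_gamma_reecriture} to obtain an explicit basis of $\Dup_\gamma(3)$ against which the associativity condition can be read off. Let $\pi : \OpLibre\left(\GenDup\right) \to \Dup_\gamma$ be the canonical surjection map and consider a general arity-two element $x := \sum_{a \in [\gamma]} \alpha_a \GDup_a + \beta_a \DDup_a$ of $\OpLibre\left(\GenDup\right)$, with $\alpha_a, \beta_a \in \K$, such that $\pi(x)$ is associative, that is $\pi(x \circ_1 x - x \circ_2 x) = 0$. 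Since $\Recr_\gamma$ is convergent, its normal forms of arity three index a basis of $\Dup_\gamma(3)$; these are exactly the syntax trees avoiding the three left-hand patterns of $\Recr_\gamma$, namely the $5\gamma^2$ trees $\GDup_c \circ_2 \GDup_d$, $\GDup_c \circ_2 \DDup_d$, $\DDup_c \circ_1 \GDup_d$, $\DDup_c \circ_1 \DDup_d$, and $\DDup_c \circ_2 \GDup_d$ for $c, d \in [\gamma]$ (in agreement with $\dim \Dup_\gamma(3) = 5\gamma^2$).

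First I would expand $x \circ_1 x - x \circ_2 x$ into its $8\gamma^2$ monomials and reduce each one to normal form through the rules $\GDup_a \circ_1 \GDup_{a'} \Recr_\gamma \GDup_{a \Min a'} \circ_2 \GDup_a$, $\GDup_a \circ_1 \DDup_{a'} \Recr_\gamma \DDup_{a'} \circ_2 \GDup_a$, and $\DDup_a \circ_2 \DDup_{a'} \Recr_\gamma \DDup_{a \Min a'} \circ_1 \DDup_a$, then gather the coefficient of each basis tree and set it to zero. The decisive observation is that the two \emph{mixed} basis trees $\GDup_c \circ_2 \DDup_d$ and $\DDup_c \circ_1 \GDup_d$ each receive a single contribution (respectively from the $\circ_2$ and the $\circ_1$ half, no rewriting being triggered), which immediately yields the constraints $\alpha_c \beta_d = 0$ and $\beta_c \alpha_d = 0$ for all $c, d \in [\gamma]$. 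Consequently the families $(\alpha_a)_a$ and $(\beta_a)_a$ cannot both be nonzero: either all $\alpha_a$ vanish, or all $\beta_a$ vanish.

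It then remains to treat these two symmetric cases. Suppose all $\beta_a = 0$, so that $x = \sum_a \alpha_a \GDup_a$; the surviving constraints come from the basis trees $\GDup_c \circ_2 \GDup_d$, which receive the contribution $\alpha_c \alpha_d$ from $x \circ_2 x$ and the contribution $\sum_{a' : d \Min a' = c} \alpha_d \alpha_{a'}$ from the reduction of $x \circ_1 x$. Splitting according to whether $c < d$, $c = d$, or $c > d$, and using that $d \Min a' = c < d$ forces $a' = c$ while $d \Min a' = d$ means $a' \geq d$, these equations collapse to $\alpha_c \alpha_d = 0$ for all $c \neq d$ together with $\alpha_d \sum_{a' > d} \alpha_{a'} = 0$. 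The first batch forces at most one index $a$ with $\alpha_a \neq 0$, whence $x = \alpha_a \GDup_a$ and $\pi(x)$ is proportional to $\pi(\GDup_a)$. The case all $\alpha_a = 0$ is handled identically, using instead the basis trees $\DDup_c \circ_1 \DDup_d$ and relation~\eqref{equ:relation_dup_gamma_3}, and yields $x = \beta_a \DDup_a$ with $\pi(x)$ proportional to $\pi(\DDup_a)$. Since one checks from~\eqref{equ:relation_dup_gamma_2} and~\eqref{equ:relation_dup_gamma_3} (taking $a' = a$) that each $\pi(\GDup_a)$ and each $\pi(\DDup_a)$ is indeed associative, this establishes the claim.

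I expect the main obstacle to be the purely clerical bookkeeping of the $\Min$-indexed sums arising when reducing $\GDup_a \circ_1 \GDup_{a'}$ and $\DDup_a \circ_2 \DDup_{a'}$ to normal form, and in particular making sure the contributions to $\GDup_c \circ_2 \GDup_d$ and $\DDup_c \circ_1 \DDup_d$ are collected correctly across the $\circ_1$ and $\circ_2$ halves. The conceptual content, by contrast, is carried entirely by the two mixed basis trees, which decouple the two families of coefficients and reduce the problem to the single-operation analysis.
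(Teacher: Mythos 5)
Your proof is correct, and it reaches the same constraint system as the paper, but by a different technical vehicle. The paper's own proof works directly with the relation space: it extracts the constraints $\alpha_a \alpha_{a'} = \alpha_{a \Min a'} \alpha_a$ and $\beta_a \beta_{a'} = \beta_{a \Min a'} \beta_a$ from the shape of \eqref{equ:relation_dup_gamma_1}---\eqref{equ:relation_dup_gamma_3}, and then observes that the syntax trees $\DDup_b \circ_1 \DDup_a$, $\DDup_a \circ_1 \GDup_{a'}$, $\GDup_b \circ_2 \GDup_a$, and $\GDup_a \circ_2 \DDup_{a'}$ (for $a < b$, resp. $a, a' \in [\gamma]$) never occur in $\RelDup$, so their coefficients in $x \circ_1 x - x \circ_2 x$ must vanish outright. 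That ``does not appear in the relations'' step is stated without further justification; your reduction to the normal-form basis of $\Recr_\gamma$ from Lemma~\ref{lem:dup_gamma_reecriture} is precisely the rigorous underpinning of it, since convergence guarantees that the $5\gamma^2$ trees $\GDup_c \circ_2 \GDup_d$, $\GDup_c \circ_2 \DDup_d$, $\DDup_c \circ_1 \GDup_d$, $\DDup_c \circ_1 \DDup_d$, $\DDup_c \circ_2 \GDup_d$ index a basis of $\Dup_\gamma(3)$ (matching $\gamma^2\,\Cat(3) = 5\gamma^2$), against which coefficients can legitimately be equated to zero. Your bookkeeping checks out: each arity-$3$ monomial reduces in at most one rewriting step, the two mixed trees do receive a single contribution each (giving $\alpha_c \beta_d = 0 = \beta_c \alpha_d$ and the decoupling of the two families), the tree $\DDup_c \circ_2 \GDup_d$ yields only the vacuous identity $\alpha_d \beta_c - \beta_c \alpha_d = 0$, and your case analysis on $\GDup_c \circ_2 \GDup_d$ (trivial for $c < d$, the relation $\alpha_d \sum_{a' > d} \alpha_{a'} = 0$ for $c = d$, and $\alpha_c \alpha_d = 0$ for $c > d$, hence for all $c \ne d$ since $\K$ is a field) correctly forces at most one nonzero $\alpha_a$; the $\beta$ case is symmetric. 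Your constraints are cosmetically different from the paper's but equivalent, and lead to the same conclusion. One small bonus on your side: you also verify, from \eqref{equ:relation_dup_gamma_2} and \eqref{equ:relation_dup_gamma_3} with $a' = a$, that each $\pi(\GDup_a)$ and $\pi(\DDup_a)$ is indeed associative, a converse the proposition does not assert but which the paper's subsequent remark counting ``exactly $2\gamma$'' associative operations implicitly relies on.
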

\begin{proof}
    Let $\pi : \OpLibre\left(\GenDup\right) \to \Dup_\gamma$ be the
    canonical surjection map. Consider the element
    \begin{equation}
        x := \sum_{a \in [\gamma]} \alpha_a \GDup_a + \beta_a \DDup_a
    \end{equation}
    of $\OpLibre\left(\GenDup\right)$, where $\alpha_a, \beta_a \in \K$
    for all $a \in [\gamma]$, such that $\pi(x)$ is associative in
    $\Dup_\gamma$. Since we have $\pi(r) = 0$ for all elements $r$
    of $\RelLibre_{\Dup_\gamma}$ (see~\eqref{equ:relation_dup_gamma_1},
    \eqref{equ:relation_dup_gamma_2}, and~\eqref{equ:relation_dup_gamma_3}),
    the fact that $\pi(x \circ_1 x - x \circ_2 x) = 0$ implies the
    constraints
    \begin{equation}\begin{split}
        \alpha_a \, \beta_{a'} - \beta_{a'} \, \alpha_a & = 0,
        \qquad a, a' \in [\gamma], \\
        \alpha_a \, \alpha_{a'} - \alpha_{a \Min a'} \, \alpha_a & = 0,
        \qquad a, a' \in [\gamma], \\
        \beta_a \, \beta_{a'} - \beta_{a \Min a'} \, \beta_a & = 0,
        \qquad a, a' \in [\gamma],
    \end{split}\end{equation}
    on the coefficients intervening in $x$. Moreover, since the syntax
    trees $\DDup_b \circ_1 \DDup_a$, $\DDup_a \circ_1 \GDup_{a'}$,
    $\GDup_b \circ_2 \GDup_a$, and $\GDup_a \circ_2 \DDup_{a'}$ do not
    appear in $\RelLibre_{\Dup_\gamma}$ for all $a < b \in [\gamma]$ and
    $a, a' \in [\gamma]$, we have the further constraints
    \begin{equation}\begin{split}
        \beta_b \, \beta_a & = 0, \qquad a < b \in [\gamma], \\
        \beta_a \, \alpha_{a'} & = 0, \qquad a, a' \in [\gamma], \\
        \alpha_b \, \alpha_a & = 0, \qquad a < b \in [\gamma], \\
        \alpha_a \, \beta_{a'} & = 0, \qquad a, a' \in [\gamma].
    \end{split}\end{equation}
    These relations imply that there are at most one $c \in [\gamma]$ and
    one $d \in [\gamma]$ such that $\alpha_c \ne 0$ and $\beta_d \ne 0$.
    In this case, the relations imply also that $\alpha_c = 0$ or
    $\beta_d = 0$, or both. Therefore, $x$ is of the form
    $x = \alpha_a \GDup_a$ or $x = \beta_a \DDup_a$ for an $a \in [\gamma]$,
    whence the statement of the proposition.
\end{proof}
\medskip

By Proposition~\ref{prop:description_operateurs_associatifs_dup_gamma}
there are exactly $2\gamma$ nonproportional associative operations in
$\Dup_\gamma$ while, by
Proposition~\ref{prop:description_operateurs_associatifs_dendr_gamma}
there are exactly $\gamma$ such operations in $\Dendr_\gamma$.
Therefore, $\Dup_\gamma$ and $\Dendr_\gamma$ are not isomorphic.
\medskip

\subsubsection{Free multiplicial algebras}
We call {\em $\gamma$-multiplicial algebra} any $\Dup_\gamma$-algebra.
From the definition of $\Dup_\gamma$, any $\gamma$-multiplicial algebra
is a vector space endowed with linear operations $\GDup_a, \DDup_a$,
$a \in [\gamma]$, satisfying the relations encoded
by~\eqref{equ:relation_dup_gamma_1}---\eqref{equ:relation_dup_gamma_3}.
\medskip

In order the simplify and make uniform next definitions, we consider
that in any $\gamma$-edge valued binary tree $\Tfr$, all edges
connecting internal nodes of $\Tfr$ with leaves are labeled by $\infty$.
By convention, for all $a \in [\gamma]$, we have
$a \Min \infty = a = \infty \Min a$. Let us endow the vector space
$\AlgLibre_{\Dup_\gamma}$ of $\gamma$-edge valued binary trees with
linear operations
\begin{equation}
    \GDup_a, \DDup_a :
    \AlgLibre_{\Dup_\gamma} \otimes \AlgLibre_{\Dup_\gamma}
    \to \AlgLibre_{\Dup_\gamma},
    \qquad a \in [\gamma],
\end{equation}
recursively defined, for any $\gamma$-edge valued binary tree $\Sfr$
and any $\gamma$-edge valued binary trees or leaves $\Tfr_1$ and
$\Tfr_2$ by
\begin{equation}
    \Sfr \GDup_a \Feuille
    := \Sfr =:
    \Feuille \DDup_a \Sfr,
\end{equation}
\begin{equation}
    \Feuille \GDup_a \Sfr := 0 =: \Sfr \DDup_a \Feuille,
\end{equation}
\begin{equation}
    \ArbreBinValue{x}{y}{\Tfr_1}{\Tfr_2}
    \GDup_a \Sfr :=
    \ArbreBinValue{x}{z}{\Tfr_1}{\Tfr_2 \GDup_a \Sfr}\,,
    \qquad
    z := a \Min y,
\end{equation}
\begin{equation}
    \begin{split}\Sfr \DDup_a\end{split}
    \ArbreBinValue{x}{y}{\Tfr_1}{\Tfr_2}
    :=
    \ArbreBinValue{z}{y}{\Sfr \DDup_a \Tfr_1}{\Tfr_2}\,,
    \qquad
    z := a \Min x.
\end{equation}
Note that neither $\Feuille \GDendr_a \Feuille$ nor
$\Feuille \DDup_a \Feuille$ are defined.
\medskip

These recursive definitions for the operations $\GDup_a$, $\DDup_a$,
$a \in [\gamma]$, lead to the following direct reformulations. If $\Sfr$
and $\Tfr$ are two $\gamma$-edge valued binary trees, $\Tfr \GDup_a \Sfr$
(resp. $\Sfr \DDup_a \Tfr$) is obtained by replacing each label $y$
(resp. $x$) of any edge in the rightmost (resp. leftmost) path of $\Tfr$
by $a \Min y$ (resp. $a \Min x$) to obtain a tree $\Tfr'$, and by
grafting the root of $\Sfr$ on the rightmost (resp. leftmost) leaf of
$\Tfr'$. These two operations are respective generalizations of the
operations {\em under} and {\em over} on binary trees introduced by
Loday and Ronco~\cite{LR02}.
\medskip

For example, we have
\begin{equation}
    \begin{split}
    \begin{tikzpicture}[xscale=.25,yscale=.2]
        \node[Feuille](0)at(0.00,-4.50){};
        \node[Feuille](2)at(2.00,-4.50){};
        \node[Feuille](4)at(4.00,-4.50){};
        \node[Feuille](6)at(6.00,-6.75){};
        \node[Feuille](8)at(8.00,-6.75){};
        \node[Noeud](1)at(1.00,-2.25){};
        \node[Noeud](3)at(3.00,0.00){};
        \node[Noeud](5)at(5.00,-2.25){};
        \node[Noeud](7)at(7.00,-4.50){};
        \draw[Arete](0)--(1);
        \draw[Arete](1)edge[]node[EtiqArete]{\begin{math}1\end{math}}(3);
        \draw[Arete](2)--(1);
        \draw[Arete](4)--(5);
        \draw[Arete](5)edge[]node[EtiqArete]{\begin{math}3\end{math}}(3);
        \draw[Arete](6)--(7);
        \draw[Arete](7)edge[]node[EtiqArete]{\begin{math}1\end{math}}(5);
        \draw[Arete](8)--(7);
        \node(r)at(3.00,2){};
        \draw[Arete](r)--(3);
    \end{tikzpicture}
    \end{split}
    \GDup_2
    \begin{split}
    \begin{tikzpicture}[xscale=.25,yscale=.2]
        \node[Feuille](0)at(0.00,-4.67){};
        \node[Feuille](2)at(2.00,-4.67){};
        \node[Feuille](4)at(4.00,-4.67){};
        \node[Feuille](6)at(6.00,-4.67){};
        \node[Noeud](1)at(1.00,-2.33){};
        \node[Noeud](3)at(3.00,0.00){};
        \node[Noeud](5)at(5.00,-2.33){};
        \draw[Arete](0)--(1);
        \draw[Arete](1)edge[]node[EtiqArete]{\begin{math}1\end{math}}(3);
        \draw[Arete](2)--(1);
        \draw[Arete](4)--(5);
        \draw[Arete](5)edge[]node[EtiqArete]{\begin{math}2\end{math}}(3);
        \draw[Arete](6)--(5);
        \node(r)at(3.00,2){};
        \draw[Arete](r)--(3);
    \end{tikzpicture}
    \end{split}
    =
    \begin{split}
    \begin{tikzpicture}[xscale=.22,yscale=.15]
        \node[Feuille](0)at(0.00,-5.00){};
        \node[Feuille](10)at(10.00,-12.50){};
        \node[Feuille](12)at(12.00,-12.50){};
        \node[Feuille](14)at(14.00,-12.50){};
        \node[Feuille](2)at(2.00,-5.00){};
        \node[Feuille](4)at(4.00,-5.00){};
        \node[Feuille](6)at(6.00,-7.50){};
        \node[Feuille](8)at(8.00,-12.50){};
        \node[Noeud](1)at(1.00,-2.50){};
        \node[Noeud](11)at(11.00,-7.50){};
        \node[Noeud](13)at(13.00,-10.00){};
        \node[Noeud](3)at(3.00,0.00){};
        \node[Noeud](5)at(5.00,-2.50){};
        \node[Noeud](7)at(7.00,-5.00){};
        \node[Noeud](9)at(9.00,-10.00){};
        \draw[Arete](0)--(1);
        \draw[Arete](1)edge[]node[EtiqArete]{\begin{math}1\end{math}}(3);
        \draw[Arete](10)--(9);
        \draw[Arete](11)edge[]node[EtiqArete]{\begin{math}2\end{math}}(7);
        \draw[Arete](12)--(13);
        \draw[Arete](13)edge[]node[EtiqArete]{\begin{math}2\end{math}}(11);
        \draw[Arete](14)--(13);
        \draw[Arete](2)--(1);
        \draw[Arete](4)--(5);
        \draw[Arete](5)edge[]node[EtiqArete]{\begin{math}2\end{math}}(3);
        \draw[Arete](6)--(7);
        \draw[Arete](7)edge[]node[EtiqArete]{\begin{math}1\end{math}}(5);
        \draw[Arete](8)--(9);
        \draw[Arete](9)edge[]node[EtiqArete]{\begin{math}1\end{math}}(11);
        \node(r)at(3.00,2.50){};
        \draw[Arete](r)--(3);
    \end{tikzpicture}
    \end{split}\,,
\end{equation}
and
\begin{equation}
    \begin{split}
    \begin{tikzpicture}[xscale=.25,yscale=.2]
        \node[Feuille](0)at(0.00,-4.50){};
        \node[Feuille](2)at(2.00,-4.50){};
        \node[Feuille](4)at(4.00,-4.50){};
        \node[Feuille](6)at(6.00,-6.75){};
        \node[Feuille](8)at(8.00,-6.75){};
        \node[Noeud](1)at(1.00,-2.25){};
        \node[Noeud](3)at(3.00,0.00){};
        \node[Noeud](5)at(5.00,-2.25){};
        \node[Noeud](7)at(7.00,-4.50){};
        \draw[Arete](0)--(1);
        \draw[Arete](1)edge[]node[EtiqArete]{\begin{math}1\end{math}}(3);
        \draw[Arete](2)--(1);
        \draw[Arete](4)--(5);
        \draw[Arete](5)edge[]node[EtiqArete]{\begin{math}3\end{math}}(3);
        \draw[Arete](6)--(7);
        \draw[Arete](7)edge[]node[EtiqArete]{\begin{math}1\end{math}}(5);
        \draw[Arete](8)--(7);
        \node(r)at(3.00,2){};
        \draw[Arete](r)--(3);
    \end{tikzpicture}
    \end{split}
    \DDup_2
    \begin{split}
    \begin{tikzpicture}[xscale=.25,yscale=.2]
        \node[Feuille](0)at(0.00,-4.67){};
        \node[Feuille](2)at(2.00,-4.67){};
        \node[Feuille](4)at(4.00,-4.67){};
        \node[Feuille](6)at(6.00,-4.67){};
        \node[Noeud](1)at(1.00,-2.33){};
        \node[Noeud](3)at(3.00,0.00){};
        \node[Noeud](5)at(5.00,-2.33){};
        \draw[Arete](0)--(1);
        \draw[Arete](1)edge[]node[EtiqArete]{\begin{math}1\end{math}}(3);
        \draw[Arete](2)--(1);
        \draw[Arete](4)--(5);
        \draw[Arete](5)edge[]node[EtiqArete]{\begin{math}2\end{math}}(3);
        \draw[Arete](6)--(5);
        \node(r)at(3.00,2){};
        \draw[Arete](r)--(3);
    \end{tikzpicture}
    \end{split}
    =
    \begin{split}
    \begin{tikzpicture}[xscale=.22,yscale=.15]
        \node[Feuille](0)at(0.00,-10.00){};
        \node[Feuille](10)at(10.00,-5.00){};
        \node[Feuille](12)at(12.00,-5.00){};
        \node[Feuille](14)at(14.00,-5.00){};
        \node[Feuille](2)at(2.00,-10.00){};
        \node[Feuille](4)at(4.00,-10.00){};
        \node[Feuille](6)at(6.00,-12.50){};
        \node[Feuille](8)at(8.00,-12.50){};
        \node[Noeud](1)at(1.00,-7.50){};
        \node[Noeud](11)at(11.00,0.00){};
        \node[Noeud](13)at(13.00,-2.50){};
        \node[Noeud](3)at(3.00,-5.00){};
        \node[Noeud](5)at(5.00,-7.50){};
        \node[Noeud](7)at(7.00,-10.00){};
        \node[Noeud](9)at(9.00,-2.50){};
        \draw[Arete](0)--(1);
        \draw[Arete](1)edge[]node[EtiqArete]{\begin{math}1\end{math}}(3);
        \draw[Arete](10)--(9);
        \draw[Arete](12)--(13);
        \draw[Arete](13)edge[]node[EtiqArete]{\begin{math}2\end{math}}(11);
        \draw[Arete](14)--(13);
        \draw[Arete](2)--(1);
        \draw[Arete](3)edge[]node[EtiqArete]{\begin{math}2\end{math}}(9);
        \draw[Arete](4)--(5);
        \draw[Arete](5)edge[]node[EtiqArete]{\begin{math}3\end{math}}(3);
        \draw[Arete](6)--(7);
        \draw[Arete](7)edge[]node[EtiqArete]{\begin{math}1\end{math}}(5);
        \draw[Arete](8)--(7);
        \draw[Arete](9)edge[]node[EtiqArete]{\begin{math}1\end{math}}(11);
        \node(r)at(11.00,2.50){};
        \draw[Arete](r)--(11);
    \end{tikzpicture}
    \end{split}\,.
\end{equation}
\medskip

\begin{Lemme} \label{lem:produit_gamma_duplicial}
    For any integer $\gamma \geq 0$, the vector space
    $\AlgLibre_{\Dup_\gamma}$ of $\gamma$-edge valued binary trees
    endowed with the operations $\GDup_a$, $\DDup_a$, $a \in [\gamma]$,
    is a $\gamma$-multiplicial algebra.
\end{Lemme}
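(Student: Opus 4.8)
The plan is to verify directly that the three defining relations \eqref{equ:relation_dup_gamma_1}, \eqref{equ:relation_dup_gamma_2}, and \eqref{equ:relation_dup_gamma_3} of $\gamma$-multiplicial algebras hold in $\AlgLibre_{\Dup_\gamma}$, using the combinatorial reformulation of the products stated just before the lemma: for $\gamma$-edge valued binary trees, $\Tfr \GDup_a \Sfr$ relabels every edge $y$ on the rightmost path of $\Tfr$ by $a \Min y$ and grafts $\Sfr$ on the resulting rightmost leaf, while $\Sfr \DDup_a \Tfr$ does the symmetric thing on the leftmost path of $\Tfr$. The $\infty$-labelling convention on leaf edges is exactly what guarantees that a leaf edge turned into an internal (grafting) edge picks up the label $a \Min \infty = a$, and every identification below will ultimately come down to the associativity of $\Min = \min$.

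First I would check \eqref{equ:relation_dup_gamma_1}, which reads $(\Rfr \DDup_{a'} \Sfr) \GDup_a \Tfr = \Rfr \DDup_{a'} (\Sfr \GDup_a \Tfr)$ on elements. Here $\GDup_a$ touches only the rightmost path and rightmost leaf of its left argument, whereas $\DDup_{a'}$ touches only the leftmost path and leftmost leaf of its right argument. In any $\gamma$-edge valued binary tree with at least one internal node these two paths share only the root node, so their edges are disjoint and their extremal leaves distinct; hence the two operations act on independent regions of $\Sfr$. Consequently both sides relabel the leftmost path of $\Sfr$ by $a' \Min (\cdot)$, relabel its rightmost path by $a \Min (\cdot)$, graft $\Rfr$ on the leftmost leaf and $\Tfr$ on the rightmost leaf, and therefore coincide.

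Next I would treat \eqref{equ:relation_dup_gamma_2}, namely $(\Rfr \GDup_{a'} \Sfr) \GDup_a \Tfr = \Rfr \GDup_{a \Min a'} (\Sfr \GDup_a \Tfr)$. Both sides visibly build the same underlying binary tree, with $\Sfr$ grafted on the rightmost leaf of $\Rfr$ and $\Tfr$ grafted on the rightmost leaf of $\Sfr$, so it remains only to match the labels along the common rightmost path. On both sides the edges issued from $\Rfr$ receive $a \Min a' \Min (\cdot)$ (as $a \Min (a' \Min (\cdot))$ on the left and as $(a \Min a') \Min (\cdot)$ on the right), the $\Rfr$-to-$\Sfr$ grafting edge receives $a \Min a'$, the edges issued from $\Sfr$ receive $a \Min (\cdot)$, and the $\Sfr$-to-$\Tfr$ grafting edge receives $a$; these agree precisely by associativity of $\Min$. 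Relation \eqref{equ:relation_dup_gamma_3} is the left--right mirror of this, the identical bookkeeping being carried out on leftmost paths instead of rightmost ones.

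The one place demanding care --- and the step I would write out in full --- is the bookkeeping of the two grafting edges: one must invoke the $\infty$ convention to see that each leaf edge that becomes a grafting edge acquires exactly $a \Min \infty$ (resp. $(a \Min a') \Min \infty$), and then observe that the nested relabellings along a shared extremal path collapse to a single relabelling by associativity of $\min$. Everything else is routine. As an alternative route, one could bypass the reformulation and argue by induction on the total number of internal nodes straight from the recursive definitions of $\GDup_a$ and $\DDup_a$, exactly along the lines of the proof of Lemma~\ref{lem:produit_gamma_dendriforme}; the reformulation is used here only to make the three checks transparent.
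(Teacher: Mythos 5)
Your proof is correct, but it takes a genuinely different route from the paper's. The paper verifies \eqref{equ:relation_dup_gamma_1} by a direct computation from the recursive definitions (decomposing $\Sfr$ at its root and tracking the two subtree labels), and proves \eqref{equ:relation_dup_gamma_2} and \eqref{equ:relation_dup_gamma_3} by induction on the total number of internal nodes of $\Rfr$, $\Sfr$, and $\Tfr$, exactly along the lines of Lemma~\ref{lem:produit_gamma_dendriforme} --- that is, precisely the alternative you mention in your last sentence. You instead argue from the global description of the products (relabel the rightmost, resp.\ leftmost, path of the appropriate argument by $a \Min (\cdot)$ and graft), which reduces \eqref{equ:relation_dup_gamma_1} to the fact that the leftmost and rightmost paths of $\Sfr$ carry disjoint edge sets and distinct extremal leaves, and reduces \eqref{equ:relation_dup_gamma_2} and \eqref{equ:relation_dup_gamma_3} to associativity of $\Min$ along a single shared extremal path; your label bookkeeping for \eqref{equ:relation_dup_gamma_2} (edges from $\Rfr$ receiving $a \Min a' \Min (\cdot)$ on both sides, the $\Rfr$-to-$\Sfr$ grafting edge receiving $a \Min a'$, edges from $\Sfr$ receiving $a \Min (\cdot)$, the $\Sfr$-to-$\Tfr$ grafting edge receiving $a$, with the $\infty$ convention giving $a \Min \infty = a$ for leaf edges promoted to grafting edges) is exactly right, and \eqref{equ:relation_dup_gamma_3} is indeed the mirror. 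What your route buys is a shorter, induction-free and conceptually transparent proof; what the paper's route buys is self-containedness: the path-relabelling description is stated in the paper as a ``direct reformulation'' of the recursive definitions but is not proved there, so your argument rests on that (easy, but unestablished) assertion, whereas the paper's inductive computation works directly from the defining recursions. If you wanted your version to stand alone, you would add the one-line induction showing the recursive and global descriptions agree, after which everything you wrote goes through.
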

\begin{proof}
    We have to check that the operations $\GDup_a$, $\DDup_a$,
    $a \in [\gamma]$, of $\AlgLibre_{\Dup_\gamma}$ satisfy
    Relations~\eqref{equ:relation_dup_gamma_1},
    \eqref{equ:relation_dup_gamma_2},
    and~\eqref{equ:relation_dup_gamma_3} of
    $\gamma$-multiplicial algebras. Let $\Rfr$, $\Sfr$, and $\Tfr$ be
    three $\gamma$-edge valued binary trees and $a, a' \in [\gamma]$.
    \smallskip

    Denote by $\Sfr_1$ (resp. $\Sfr_2$) the left subtree (resp. right
    subtree) of $\Sfr$ and by $x$ (resp. $y$) the label of the left
    (resp. right) edge incident to the root of $\Sfr$. We have
    \begin{multline}
        (\Rfr \DDup_{a'} \Sfr) \GDup_a \Tfr  =
        \left(\Rfr \DDup_{a'} \ArbreBinValue{x}{y}{\Sfr_1}{\Sfr_2}\right)
        \GDup_a \Tfr
        = \left(\ArbreBinValue{z}{y}{\Rfr \DDup_{a'} \Sfr_1}{\Sfr_2}
        \right)
        \GDup_a \Tfr
        \displaybreak[0]
        \\
        = \ArbreBinValue{z}{t}{\Rfr \DDup_{a'} \Sfr_1}
                {\Sfr_2 \GDup_a \Tfr}
        \displaybreak[0]
        \\
        = \Rfr \DDup_{a'}
        \left(\ArbreBinValue{x}{t}{\Sfr_1}{\Sfr_2 \GDup_a \Tfr}\right)
        = \Rfr \DDup_{a'}
        \left(\ArbreBinValue{x}{y}{\Sfr_1}{\Sfr_2}
        \GDup_a \Tfr \right) =
        \Rfr \DDup_{a'} (\Sfr \GDup_a \Tfr),
    \end{multline}
    where $z := a' \Min x$ and $t := a \Min y$. This shows
    that~\eqref{equ:relation_dup_gamma_1} is satisfied in
    $\AlgLibre_{\Dup_\gamma}$.
    \medskip

    We now prove that Relations~\eqref{equ:relation_dup_gamma_2}
    and~\eqref{equ:relation_dup_gamma_3} hold by induction on the sum of
    the number of internal nodes of $\Rfr$, $\Sfr$, and $\Tfr$. Base
    case holds when all these trees have exactly one internal node, and
    since
    \begin{multline}
        \left(\Noeud \GDup_{a'} \Noeud\right) \GDup_a \Noeud
        - \Noeud \GDup_{a \Min a'} \left( \Noeud \GDup_a \Noeud \right)
        \displaybreak[0]
        \\
        =
        \begin{split}
        \begin{tikzpicture}[xscale=.3,yscale=.25]
            \node[Feuille](0)at(0.00,-1.67){};
            \node[Feuille](2)at(2.00,-3.33){};
            \node[Feuille](4)at(4.00,-3.33){};
            \node[Noeud](1)at(1.00,0.00){};
            \node[Noeud](3)at(3.00,-1.67){};
            \draw[Arete](0)--(1);
            \draw[Arete](2)--(3);
            \draw[Arete](3)edge[]node[EtiqArete]{\begin{math}a'\end{math}}(1);
            \draw[Arete](4)--(3);
            \node(r)at(1.00,1.67){};
            \draw[Arete](r)--(1);
        \end{tikzpicture}
        \end{split}
        \GDup_a \Noeud -
        \Noeud \GDup_{a \Min a'}
        \begin{split}
        \begin{tikzpicture}[xscale=.3,yscale=.25]
            \node[Feuille](0)at(0.00,-1.67){};
            \node[Feuille](2)at(2.00,-3.33){};
            \node[Feuille](4)at(4.00,-3.33){};
            \node[Noeud](1)at(1.00,0.00){};
            \node[Noeud](3)at(3.00,-1.67){};
            \draw[Arete](0)--(1);
            \draw[Arete](2)--(3);
            \draw[Arete](3)edge[]node[EtiqArete]{\begin{math}a\end{math}}(1);
            \draw[Arete](4)--(3);
            \node(r)at(1.00,1.67){};
            \draw[Arete](r)--(1);
        \end{tikzpicture}
        \end{split}
        \displaybreak[0]
        \\
        =
        \begin{split}
        \begin{tikzpicture}[xscale=.3,yscale=.25]
            \node[Feuille](0)at(0.00,-1.75){};
            \node[Feuille](2)at(2.00,-3.50){};
            \node[Feuille](4)at(4.00,-5.25){};
            \node[Feuille](6)at(6.00,-5.25){};
            \node[Noeud](1)at(1.00,0.00){};
            \node[Noeud](3)at(3.00,-1.75){};
            \node[Noeud](5)at(5.00,-3.50){};
            \draw[Arete](0)--(1);
            \draw[Arete](2)--(3);
            \draw[Arete](3)edge[]node[EtiqArete]{\begin{math}z\end{math}}(1);
            \draw[Arete](4)--(5);
            \draw[Arete](5)edge[]node[EtiqArete]{\begin{math}a\end{math}}(3);
            \draw[Arete](6)--(5);
            \node(r)at(1.00,1.75){};
            \draw[Arete](r)--(1);
        \end{tikzpicture}
        \end{split}
        -
        \begin{split}
        \begin{tikzpicture}[xscale=.3,yscale=.25]
            \node[Feuille](0)at(0.00,-1.75){};
            \node[Feuille](2)at(2.00,-3.50){};
            \node[Feuille](4)at(4.00,-5.25){};
            \node[Feuille](6)at(6.00,-5.25){};
            \node[Noeud](1)at(1.00,0.00){};
            \node[Noeud](3)at(3.00,-1.75){};
            \node[Noeud](5)at(5.00,-3.50){};
            \draw[Arete](0)--(1);
            \draw[Arete](2)--(3);
            \draw[Arete](3)edge[]node[EtiqArete]{\begin{math}z\end{math}}(1);
            \draw[Arete](4)--(5);
            \draw[Arete](5)edge[]node[EtiqArete]{\begin{math}a\end{math}}(3);
            \draw[Arete](6)--(5);
            \node(r)at(1.00,1.75){};
            \draw[Arete](r)--(1);
        \end{tikzpicture}
        \end{split}
        = 0,
    \end{multline}
    where $z := a \Min a'$, \eqref{equ:relation_dup_gamma_2}
    holds on trees with one internal node. For the same arguments,
    we can show that~\eqref{equ:relation_dup_gamma_3} holds
    on trees with exactly one internal node. Denote now by $\Rfr_1$
    (resp. $\Rfr_2$) the left subtree (resp. right subtree) of $\Rfr$
    and by $x$ (resp. $y$) the label of the left (resp. right) edge
    incident to the root of $\Rfr$. We have
    \begin{multline} \label{equ:produit_gamma_dup_expr}
        (\Rfr \GDup_{a'} \Sfr) \GDup_a \Tfr
        - \Rfr \GDup_{a \Min a'} (\Sfr \GDup_a \Tfr)
        \displaybreak[0]
        \\[1em]
        =
        \left(\ArbreBinValue{x}{y}{\Rfr_1}{\Rfr_2}
        \GDup_{a'} \Sfr \right) \GDup_a \Tfr
        -
        \ArbreBinValue{x}{y}{\Rfr_1}{\Rfr_2}
        \GDup_{a \Min a'} (\Sfr \GDup_a \Tfr)
        \displaybreak[0]
        \\
        =
        \left(\ArbreBinValue{x}{z}{\Rfr_1}{\Rfr_2 \GDup_{a'} \Sfr}
        \right) \GDup_a \Tfr
        -
        \ArbreBinValue{x}{y}{\Rfr_1}{\Rfr_2}
        \GDup_{a \Min a'} (\Sfr \GDup_a \Tfr)
        \displaybreak[0]
        \\
        =
        \ArbreBinValue{x}{t}{\Rfr_1}{(\Rfr_2 \GDup_{a'} \Sfr) \GDup_a \Tfr}
        -
        \ArbreBinValue{x}{t}{\Rfr_1}{\Rfr_2 \GDup_u (\Sfr \GDup_a \Tfr)}\,,
    \end{multline}
    where $z := y \Min a'$, $t := z \Min a = y \Min a' \Min a$,
    and $u := a \Min a'$. Now, since by induction hypothesis
    Relation~\eqref{equ:relation_dup_gamma_2} holds on $\Rfr_2$, $\Sfr$,
    and $\Tfr$, \eqref{equ:produit_gamma_dup_expr} is zero. Therefore,
    \eqref{equ:relation_dup_gamma_2} is satisfied
    in~$\AlgLibre_{\Dup_\gamma}$.
    \smallskip

    Finally, for the same arguments, we can show
    that~\eqref{equ:relation_dup_gamma_3} is satisfied in
    $\AlgLibre_{\Dup_\gamma}$, implying the statement of the lemma.
\end{proof}
\medskip

\begin{Lemme} \label{lem:produit_gamma_engendre_duplicial}
    For any integer $\gamma \geq 0$, the $\gamma$-multiplicial algebra
    $\AlgLibre_{\Dup_\gamma}$ of $\gamma$-edge valued binary trees
    endowed with the operations $\GDup_a$, $\DDup_a$, $a \in [\gamma]$,
    is generated by
    \begin{equation}
        \Noeud\,.
    \end{equation}
\end{Lemme}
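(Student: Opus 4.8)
The plan is to mirror the proof of Lemma~\ref{lem:produit_gamma_engendre_dendriforme}, substituting the multiplicial operations $\GDup_a$, $\DDup_a$ for the polydendriform ones. I would start from Lemma~\ref{lem:produit_gamma_duplicial}, which ensures that $\AlgLibre_{\Dup_\gamma}$ equipped with the operations $\GDup_a$, $\DDup_a$, $a \in [\gamma]$, is a $\gamma$-multiplicial algebra; this legitimizes considering the $\gamma$-multiplicial subalgebra $\Dca$ of $\AlgLibre_{\Dup_\gamma}$ generated by $\NoeudTexte$. The whole statement then amounts to proving that every $\gamma$-edge valued binary tree lies in $\Dca$.

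I would establish this by induction on the number $n$ of internal nodes of a $\gamma$-edge valued binary tree $\Tfr$. When $n = 1$ we simply have $\Tfr = \NoeudTexte \in \Dca$. Otherwise, let $\Tfr_1$ and $\Tfr_2$ be the left and right subtrees of the root of $\Tfr$, and let $x$ and $y$ be the labels of the left and right edges incident to the root. Since $\Tfr_1$ and $\Tfr_2$ each have fewer than $n$ internal nodes, the induction hypothesis places both of them in $\Dca$.

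The crucial step is the reconstruction identity $\Tfr = (\Tfr_1 \DDup_x \NoeudTexte) \GDup_y \Tfr_2$, which I would verify using the relabel-and-graft (over and under) description of $\GDup_a$ and $\DDup_a$ recalled just before Lemma~\ref{lem:produit_gamma_duplicial}. Indeed, $\Tfr_1 \DDup_x \NoeudTexte$ grafts $\Tfr_1$ onto the left leaf of a single node, giving the tree whose left subtree is $\Tfr_1$ along an edge relabeled $x \Min \infty = x$ and whose right subtree is a single leaf; applying $\GDup_y$ with second argument $\Tfr_2$ then grafts $\Tfr_2$ onto the resulting rightmost leaf along an edge relabeled $y \Min \infty = y$, reproducing $\Tfr$ exactly. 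Hence $\Tfr \in \Dca$, the induction closes, and $\Dca = \AlgLibre_{\Dup_\gamma}$. The only delicate point, and where I expect essentially all of the (routine) work to lie, is the bookkeeping of edge labels: one must confirm that the two $\Min$ relabelings act only on the fresh leaf-edges created by the graftings and leave the labels $x$, $y$ and all internal labels of $\Tfr_1$ and $\Tfr_2$ untouched, exactly as in the polydendriform computation.
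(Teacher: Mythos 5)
Your proposal matches the paper's proof essentially verbatim: the paper likewise invokes Lemma~\ref{lem:produit_gamma_duplicial}, passes to the $\gamma$-multiplicial subalgebra generated by $\NoeudTexte$, and runs the same induction on the number of internal nodes via the reconstruction identity $\left(\Tfr_1 \DDup_x \NoeudTexte\right) \GDup_y \Tfr_2 = \Tfr$. The only cosmetic difference is that you check this identity through the over/under graft-and-relabel description of $\GDup_a$ and $\DDup_a$, whereas the paper writes the same one-line computation directly from the recursive definitions; the label bookkeeping you flag (the two $\Min$ relabelings only hit the leaf-edges labeled $\infty$, so $x \Min \infty = x$ and $y \Min \infty = y$) is exactly what makes both versions go through.
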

\begin{proof}
    First, Lemma~\ref{lem:produit_gamma_duplicial} shows that
    $\AlgLibre_{\Dup_\gamma}$ is a $\gamma$-multiplicial algebra.
    Let $\Mca$ be the $\gamma$-multiplicial subalgebra of
    $\AlgLibre_{\Dup_\gamma}$ generated by $\NoeudTexte$. Let us show
    that any $\gamma$-edge valued binary tree $\Tfr$ is in $\Mca$
    by induction on the number $n$ of its internal nodes. When $n = 1$,
    $\Tfr = \NoeudTexte$ and hence the property is satisfied. Otherwise,
    let $\Tfr_1$ (resp. $\Tfr_2$) be the left (resp. right) subtree of
    the root of $\Tfr$ and denote by $x$ (resp. $y$) the label of the
    left (resp. right) edge incident to the root of $\Tfr$. Since $\Tfr_1$
    and~$\Tfr_2$ have less internal nodes than $\Tfr$, by induction
    hypothesis, $\Tfr_1$ and $\Tfr_2$ are in $\Mca$. Moreover, by
    definition of the operations $\GDup_a$, $\DDup_a$, $a \in [\gamma]$,
    of $\AlgLibre_{\Dup_\gamma}$, one has
    \begin{equation}
        \begin{split}\end{split}
        \left(\Tfr_1 \DDup_x \Noeud\right) \GDup_y \Tfr_2
        =
        \begin{split}
        \begin{tikzpicture}[xscale=.5,yscale=.4]
            \node(0)at(-.50,-1.50){\begin{math}\Tfr_1\end{math}};
            \node[Feuille](2)at(2.0,-1.00){};
            \node[Noeud](1)at(1.00,.50){};
            \draw[Arete](0)edge[]node[EtiqArete]
                {\begin{math}x\end{math}}(1);
            \draw[Arete](2)--(1);
            \node(r)at(1.00,1.5){};
            \draw[Arete](r)--(1);
        \end{tikzpicture}
        \end{split}
        \GDup_y \Tfr_2
        =
        \ArbreBinValue{x}{y}{\Tfr_1}{\Tfr_2} = \Tfr,
    \end{equation}
    showing that $\Tfr$ also is in $\Mca$. Therefore, $\Mca$ is
    $\AlgLibre_{\Dup_\gamma}$, showing that $\AlgLibre_{\Dup_\gamma}$ is
    generated by $\NoeudTexte$.
\end{proof}
\medskip

\begin{Theoreme} \label{thm:algebre_dup_gamma_libre}
    For any integer $\gamma \geq 0$, the vector space
    $\AlgLibre_{\Dup_\gamma}$ of $\gamma$-valued binary trees endowed
    with the operations $\GDup_a$, $\DDup_a$, $a \in [\gamma]$, is the
    free $\gamma$-multiplicial algebra over one generator.
\end{Theoreme}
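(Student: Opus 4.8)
The plan is to mirror, \emph{mutatis mutandis}, the proof of Theorem~\ref{thm:algebre_dendr_gamma_libre}, since all the substantial work has already been carried out in the preceding lemmas and in Proposition~\ref{prop:proprietes_dup_gamma}. First I would invoke Lemma~\ref{lem:produit_gamma_duplicial}, which ensures that the vector space $\AlgLibre_{\Dup_\gamma}$ of $\gamma$-edge valued binary trees, equipped with the operations $\GDup_a$, $\DDup_a$, $a \in [\gamma]$, satisfies Relations~\eqref{equ:relation_dup_gamma_1}, \eqref{equ:relation_dup_gamma_2}, and~\eqref{equ:relation_dup_gamma_3}, hence is a bona fide $\gamma$-multiplicial algebra, that is, an algebra over the operad $\Dup_\gamma$. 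Next, Lemma~\ref{lem:produit_gamma_engendre_duplicial} shows that this algebra is generated by the single tree $\NoeudTexte$ with one internal node. Together these two facts say that $\AlgLibre_{\Dup_\gamma}$ is a $\gamma$-multiplicial algebra generated by one element.

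The second step is the universal property of free objects. Because $\AlgLibre_{\Dup_\gamma}$ is generated by one element, there is a surjective morphism of $\gamma$-multiplicial algebras from the free $\gamma$-multiplicial algebra over one generator onto $\AlgLibre_{\Dup_\gamma}$, sending the free generator to $\NoeudTexte$. Since the free algebra over one generator in the category encoded by an operad $\Oca$ has, in each arity $n \geq 1$, a homogeneous component of dimension $\dim \Oca(n)$, the free $\gamma$-multiplicial algebra over one generator has arity-$n$ component of dimension $\dim \Dup_\gamma(n)$.

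The decisive ingredient is then the dimension count provided by Proposition~\ref{prop:proprietes_dup_gamma}: for every $n \geq 1$, the space $\Dup_\gamma(n)$ is precisely the vector space of $\gamma$-edge valued binary trees with $n$ internal nodes, so that $\dim \Dup_\gamma(n) = \gamma^{n - 1} \Cat(n)$. As $\AlgLibre_{\Dup_\gamma}(n)$ is, by construction, the very same space of trees, we obtain $\dim \AlgLibre_{\Dup_\gamma}(n) = \dim \Dup_\gamma(n)$ for all $n$. A surjective linear map between finite-dimensional spaces of equal dimension is an isomorphism; applying this arity by arity shows that the surjection above is an isomorphism, so that no relation beyond the $\gamma$-multiplicial ones~\eqref{equ:relation_dup_gamma_1}---\eqref{equ:relation_dup_gamma_3} can hold in $\AlgLibre_{\Dup_\gamma}$, whence $\AlgLibre_{\Dup_\gamma}$ is free. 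The argument carries no genuine obstacle: its only delicate point is the coincidence of graded dimensions, which is exactly what Proposition~\ref{prop:proprietes_dup_gamma} together with the normal-form count of Lemma~\ref{lem:dup_gamma_reecriture} guarantees, the generating series of $\gamma$-edge valued binary trees agreeing with the one established for $\Dendr_\gamma$ in Proposition~\ref{prop:serie_hilbert_dendr_gamma}.
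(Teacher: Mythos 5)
Your proposal is correct and follows essentially the same route as the paper's proof: it combines Lemmas~\ref{lem:produit_gamma_duplicial} and~\ref{lem:produit_gamma_engendre_duplicial} to get a $\gamma$-multiplicial algebra generated by one element, then uses the dimension coincidence from Proposition~\ref{prop:proprietes_dup_gamma} to conclude freeness. You merely make explicit the universal-property surjection and the arity-by-arity rank argument that the paper leaves implicit, which is a harmless (indeed clarifying) elaboration rather than a different method.
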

\begin{proof}
    By Lemmas~\ref{lem:produit_gamma_duplicial}
    and~\ref{lem:produit_gamma_engendre_duplicial},
    $\AlgLibre_{\Dup_\gamma}$ is a $\gamma$-multiplicial algebra over
    one generator.
    \smallskip

    Moreover, since by Proposition~\ref{prop:proprietes_dup_gamma}, for
    any $n \geq 1$, the dimension of $\AlgLibre_{\Dup_\gamma}(n)$ is the
    same as the dimension of $\Dup_\gamma(n)$, there cannot be relations
    in $\AlgLibre_{\Dup_\gamma}(n)$ involving $\Gfr$ that are not
    $\gamma$-multiplicial relations (see~\eqref{equ:relation_dup_gamma_1},
    \eqref{equ:relation_dup_gamma_2}, and~\eqref{equ:relation_dup_gamma_3}).
    Hence, $\AlgLibre_{\Dup_\gamma}$ is free as a $\gamma$-multiplicial
    algebra over one generator.
\end{proof}
\medskip

\subsection{Polytridendriform operads}
We propose here a generalization $\TDendr_\gamma$ on a nonnegative
integer parameter $\gamma$ of the tridendriform operad~\cite{LR04}.
This last operad is the Koszul dual of the triassociative operad. We proceed
by using an analogous strategy as the one used to define the operads
$\Dendr_\gamma$ as Koszul duals of $\Dias_\gamma$. Indeed, we define
$\TDendr_\gamma$ as the Koszul dual of the operad $\Trias_\gamma$,
called $\gamma$-pluritriassociative operad, a generalization of the
triassociative operad defined in~\cite{GirI}.
\medskip

Since the proofs of the results contained in this section are very
similar to the ones of Section~\ref{sec:dendr_gamma}, we omit proofs here.
\medskip

Theorem~4.2.1 of~\cite{GirI}, by exhibiting
a presentation of $\Trias_\gamma$, shows that this operad is binary and
quadratic. It then admits a Koszul dual, denoted by $\TDendr_\gamma$ and
called {\em $\gamma$-polytridendriform operad}.
\medskip

\begin{Theoreme} \label{thm:presentation_tdendr_gamma}
    For any integer $\gamma \geq 0$, the operad $\TDendr_\gamma$ admits
    the following presentation. It is generated by
    $\GenTDendr := \GenTDendr(2) :=
    \{\GDendrA_a, \MTDendr, \DDendrA_a : a \in [\gamma]\}$ and its space
    of relations $\RelTDendr$ is generated by
    \begin{subequations}
    \begin{equation}\label{equ:relation_presentation_tdendr_gamma_1}
        \MTDendr \circ_1 \MTDendr
         -
        \MTDendr \circ_2 \MTDendr,
    \end{equation}
    \begin{equation}\label{equ:relation_presentation_tdendr_gamma_2}
        \GDendrA_a \circ_1 \MTDendr
         -
        \MTDendr \circ_2 \GDendrA_a,
        \qquad a \in [\gamma],
    \end{equation}
    \begin{equation}\label{equ:relation_presentation_tdendr_gamma_3}
        \MTDendr \circ_1 \DDendrA_a
         -
        \DDendrA_a \circ_2 \MTDendr,
        \qquad a \in [\gamma],
    \end{equation}
    \begin{equation}\label{equ:relation_presentation_tdendr_gamma_4}
        \MTDendr \circ_1 \GDendrA_a
         -
        \MTDendr \circ_2 \DDendrA_a,
        \qquad a \in [\gamma],
    \end{equation}
    \begin{equation}\label{equ:relation_presentation_tdendr_gamma_5}
        \GDendrA_a \circ_1 \DDendrA_{a'}
         -
        \DDendrA_{a'} \circ_2 \GDendrA_a,
        \qquad a, a' \in [\gamma],
    \end{equation}
    \begin{equation}\label{equ:relation_presentation_tdendr_gamma_6}
        \GDendrA_a \circ_1 \GDendrA_b
         -
        \GDendrA_a \circ_2 \DDendrA_b,
        \qquad a < b \in [\gamma],
    \end{equation}
    \begin{equation}\label{equ:relation_presentation_tdendr_gamma_7}
        \DDendrA_a \circ_1 \GDendrA_b
         -
        \DDendrA_a \circ_2 \DDendrA_b,
        \qquad a < b \in [\gamma],
    \end{equation}
    \begin{equation}\label{equ:relation_presentation_tdendr_gamma_8}
        \GDendrA_b \circ_1 \GDendrA_a
         -
        \GDendrA_a \circ_2 \GDendrA_b,
        \qquad a < b \in [\gamma],
    \end{equation}
    \begin{equation}\label{equ:relation_presentation_tdendr_gamma_9}
        \DDendrA_a \circ_1 \DDendrA_b
         -
        \DDendrA_b \circ_2 \DDendrA_a,
        \qquad a < b \in [\gamma],
    \end{equation}
    \begin{equation}\label{equ:relation_presentation_tdendr_gamma_10}
        \GDendrA_d \circ_1 \GDendrA_d -
        \GDendrA_d \circ_2 \MTDendr -
        \left(\sum_{c \in [d]}
        \GDendrA_d \circ_2 \GDendrA_c +
        \GDendrA_d \circ_2 \DDendrA_c
        \right),
        \qquad d \in [\gamma],
    \end{equation}
    \begin{equation}\label{equ:relation_presentation_tdendr_gamma_11}
        \left(\sum_{c \in [d]}
        \DDendrA_d \circ_1 \GDendrA_c +
        \DDendrA_d \circ_1 \DDendrA_c
        \right) +
        \DDendrA_d \circ_1 \MTDendr -
        \DDendrA_d \circ_2 \DDendrA_d,
        \qquad d \in [\gamma].
    \end{equation}
    \end{subequations}
\end{Theoreme}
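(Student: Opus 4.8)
The plan is to follow verbatim the strategy of the proof of Theorem~\ref{thm:presentation_dendr_gamma}, simply replacing $\Dias_\gamma$ by $\Trias_\gamma$ throughout. First I would invoke Theorem~4.2.1 of~\cite{GirI}, which exhibits $\Trias_\gamma$ as a binary and quadratic operad: it is generated by the $2\gamma + 1$ elements $\GDias_a$, $\DDias_a$ (for $a \in [\gamma]$) and the single middle operation $\MTrias$, and its space of relations $\RelTrias$ is described by an explicit spanning set inside $\OpLibre(\GenTrias)(3)$. Because $\Trias_\gamma$ is binary and quadratic, the recipe of Section~\ref{subsec:dual_de_Koszul} tells us that its Koszul dual $\TDendr_\gamma$ admits the presentation $\left(\GenTrias, \RelTrias^\perp\right)$, where $\RelTrias^\perp$ is the annihilator of $\RelTrias$ for the scalar product $\langle -, - \rangle$. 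The whole proof therefore reduces to identifying $\RelTrias^\perp$ with the space $\RelTDendr$ spanned by \eqref{equ:relation_presentation_tdendr_gamma_1}--\eqref{equ:relation_presentation_tdendr_gamma_11}, after fixing the identification of $\GDendrA_a$, $\DDendrA_a$, $\MTDendr$ with $\GDias_a$, $\DDias_a$, $\MTrias$ respectively.

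The core is then a direct computation, exactly as in the dendriform case but with a larger ambient space. Under the above identification, both $\RelTrias$ and $\RelTDendr$ live in $\OpLibre(\GenTrias)(3)$, which is spanned by the composites $x \circ_1 y$ and $x \circ_2 y$ over all ordered pairs $(x, y)$ of generators and hence has dimension $2(2\gamma + 1)^2 = 8\gamma^2 + 8\gamma + 2$. Recalling that $\langle -, - \rangle$ pairs $x \circ_1 y$ with itself to $1$, pairs $x \circ_2 y$ with itself to $-1$, and sends every other pair of distinct basis composites to $0$, I would check that each generator of $\RelTDendr$ is orthogonal to each generator of $\RelTrias$, thereby establishing $\RelTDendr \subseteq \RelTrias^\perp$. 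It is convenient to organise this by families: relations \eqref{equ:relation_presentation_tdendr_gamma_1}--\eqref{equ:relation_presentation_tdendr_gamma_4} are those carrying the new generator $\MTDendr$ and pair against the triassociative relations involving $\MTrias$, relations \eqref{equ:relation_presentation_tdendr_gamma_5}--\eqref{equ:relation_presentation_tdendr_gamma_9} reproduce the polydendriform pattern already treated in Theorem~\ref{thm:presentation_dendr_gamma}, and the two summed relations \eqref{equ:relation_presentation_tdendr_gamma_10} and \eqref{equ:relation_presentation_tdendr_gamma_11} are the counterparts of \eqref{equ:relation_dendr_gamma_6_alternative}--\eqref{equ:relation_dendr_gamma_7_alternative} corrected by the extra terms $\GDendrA_d \circ_2 \MTDendr$ and $\DDendrA_d \circ_1 \MTDendr$ forced precisely by the presence of $\MTrias$.

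To promote the inclusion $\RelTDendr \subseteq \RelTrias^\perp$ to an equality, I would finish with a dimension count. Tallying \eqref{equ:relation_presentation_tdendr_gamma_1}--\eqref{equ:relation_presentation_tdendr_gamma_11} gives $1 + 5\gamma + \gamma^2 + 4\binom{\gamma}{2} = 3\gamma^2 + 3\gamma + 1$ generators, which are linearly independent because, just as in the proof of Theorem~\ref{thm:autre_presentation_dendr_gamma}, distinct families involve essentially disjoint leading composite trees; hence $\dim \RelTDendr = 3\gamma^2 + 3\gamma + 1$. Since the nondegeneracy of $\langle -, - \rangle$ gives $\dim \RelTrias^\perp = (8\gamma^2 + 8\gamma + 2) - \dim \RelTrias$, and the relations of $\Trias_\gamma$ from Theorem~4.2.1 of~\cite{GirI} count to $\dim \RelTrias = 5\gamma^2 + 5\gamma + 1$, we obtain $\dim \RelTrias^\perp = 3\gamma^2 + 3\gamma + 1 = \dim \RelTDendr$, whence $\RelTrias^\perp = \RelTDendr$ and the claimed presentation follows. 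The main obstacle is entirely bookkeeping rather than conceptual: the extra generator $\MTDendr$ enlarges the space of composite basis trees well beyond the polydendriform case, so keeping track of which $\circ_1$ and $\circ_2$ terms survive---especially in the middle-operation relations \eqref{equ:relation_presentation_tdendr_gamma_2}--\eqref{equ:relation_presentation_tdendr_gamma_4} and in the two summed relations---without a sign or indexing slip is the part that demands the most care.
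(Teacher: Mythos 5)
Your proposal is correct and follows essentially the route the paper intends: the paper omits the proof precisely because it is the argument of Theorem~\ref{thm:presentation_dendr_gamma} transposed from $\Dias_\gamma$ to $\Trias_\gamma$ (invoke Theorem~4.2.1 of~\cite{GirI} for binarity and quadraticity, identify generators, and compute the annihilator $\RelLibre_{\Trias_\gamma}^\perp$ for the scalar product of Section~\ref{subsec:dual_de_Koszul}), with your inclusion-plus-dimension-count refinement matching the mechanism of Theorem~\ref{thm:autre_presentation_dendr_gamma}. Your bookkeeping also checks out: $\dim \RelTDendr = 3\gamma^2 + 3\gamma + 1 = (\gamma+1)^3 - \gamma^3 = \dim \Trias_\gamma(3)$ and $\dim \RelLibre_{\Trias_\gamma} = 5\gamma^2 + 5\gamma + 1 = \dim \TDendr_\gamma(3)$, consistent with Proposition~\ref{prop:serie_hilbert_tdendr_gamma} and with the total $2(2\gamma+1)^2 = 8\gamma^2 + 8\gamma + 2$.
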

\medskip

\begin{Proposition} \label{prop:serie_hilbert_tdendr_gamma}
    For any integer $\gamma \geq 0$, the Hilbert series
    $\Hca_{\TDendr_\gamma}(t)$ of the operad $\TDendr_\gamma$ satisfies
    \begin{equation}
        \Hca_{\TDendr_\gamma}(t) =
        t + (2\gamma + 1) t \, \Hca_{\TDendr_\gamma}(t) +
        \gamma (\gamma + 1) t \, \Hca_{\TDendr_\gamma}(t)^2.
    \end{equation}
\end{Proposition}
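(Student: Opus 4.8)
The plan is to adapt, essentially verbatim, the argument proving Proposition~\ref{prop:serie_hilbert_dendr_gamma}, with the Koszul dual pair $(\Dias_\gamma, \Dendr_\gamma)$ replaced by $(\Trias_\gamma, \TDendr_\gamma)$. Let $\Hca(t)$ be the unique formal power series without constant term solving the functional equation of the statement; it is well defined since the equation rewrites as $\Hca = t\left(1 + (2\gamma + 1)\Hca + \gamma(\gamma + 1)\Hca^2\right)$, which determines the coefficients of $\Hca$ one arity at a time. The goal is to prove that $\Hca(t) = \Hca_{\TDendr_\gamma}(t)$.

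First I would solve this equation for $t$. The key observation is the factorization
\[
    1 + (2\gamma + 1)\Hca + \gamma(\gamma + 1)\Hca^2
    = (1 + \gamma\Hca)(1 + (\gamma + 1)\Hca),
\]
which gives $t = \frac{\Hca}{(1 + \gamma\Hca)(1 + (\gamma + 1)\Hca)}$. Setting $\bar\Hca(t) := \Hca(-t)$ and substituting $t \mapsto -t$, I obtain
\[
    t = \frac{-\bar\Hca(t)}{(1 + \gamma\bar\Hca(t))(1 + (\gamma + 1)\bar\Hca(t))}.
\]

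Next I would bring in $\Trias_\gamma$. Writing $\bar\Hca_{\Trias_\gamma}(t) := \Hca_{\Trias_\gamma}(-t)$, where $\Hca_{\Trias_\gamma}(t)$ is the Hilbert series of $\Trias_\gamma$ established in~\cite{GirI}, the crux is to recognize the right-hand side above as $\bar\Hca_{\Trias_\gamma}(\bar\Hca(t))$; concretely this amounts to checking that
\[
    \Hca_{\Trias_\gamma}(t) = \frac{t}{(1 - \gamma t)(1 - (\gamma + 1)t)}.
\]
This single verification, using the explicit expression for $\Hca_{\Trias_\gamma}$ from~\cite{GirI}, is the only nonroutine step, and I expect it to be the main obstacle, since everything hinges on the Hilbert series of $\Trias_\gamma$ being exactly this rational function. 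Granting it, $\bar\Hca$ and $\bar\Hca_{\Trias_\gamma}$ are mutual inverses for series composition.

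Finally I would invoke Koszulity. By~\cite{GirI}, $\Trias_\gamma$ is a Koszul operad, and since $\TDendr_\gamma$ is by definition its Koszul dual, $\TDendr_\gamma$ is Koszul as well; hence the Hilbert series of the two operads satisfy~\eqref{equ:relation_series_hilbert_operade_duale}, which says precisely that $\bar\Hca_{\Trias_\gamma}$ and $\bar\Hca_{\TDendr_\gamma}$ are inverse to each other for series composition. Comparing with the previous paragraph and using the uniqueness of the compositional inverse, I conclude $\bar\Hca = \bar\Hca_{\TDendr_\gamma}$, hence $\Hca = \Hca_{\TDendr_\gamma}$, so the Hilbert series of $\TDendr_\gamma$ does satisfy the stated functional equation. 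Apart from the factorization and the comparison with $\Hca_{\Trias_\gamma}$, this is the same formal manipulation as in Proposition~\ref{prop:serie_hilbert_dendr_gamma}.
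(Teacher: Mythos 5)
Your proof is correct and is essentially the paper's own argument: the paper omits the proof, stating only that it is very similar to that of Proposition~\ref{prop:serie_hilbert_dendr_gamma}, and your proposal is exactly that adaptation, replacing $\Hca_{\Dias_\gamma}(t) = t/(1 - \gamma t)^2$ by $\Hca_{\Trias_\gamma}(t) = t/\left((1 - \gamma t)(1 - (\gamma + 1)t)\right)$ (equivalently, $\dim \Trias_\gamma(n) = (\gamma + 1)^n - \gamma^n$ as established in~\cite{GirI}). The factorization $1 + (2\gamma + 1)\Hca + \gamma(\gamma + 1)\Hca^2 = (1 + \gamma \Hca)\left(1 + (\gamma + 1)\Hca\right)$ you isolate is indeed the only computation beyond the template, and it checks out, so the compositional-inverse argument via Koszulity of $\Trias_\gamma$ and~\eqref{equ:relation_series_hilbert_operade_duale} goes through verbatim.
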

\medskip

By examining the expression for $\Hca_{\TDendr_\gamma}(t)$ of the
statement of Proposition~\ref{prop:serie_hilbert_tdendr_gamma}, we
observe that for any $n \geq 1$, $\TDendr(n)$ can be seen as the
vector space $\AlgLibre_{\TDendr_\gamma}(n)$ of Schröder trees
with $n + 1$ leaves wherein its edges connecting two internal nodes
are labeled on $[\gamma]$. We call these trees
{\em $\gamma$-edge valued Schröder trees}. For instance,
\begin{equation}
    \begin{split}
    \begin{tikzpicture}[xscale=.25,yscale=.10]
        \node[Feuille](0)at(0.00,-21.60){};
        \node[Feuille](11)at(10.00,-10.80){};
        \node[Feuille](13)at(11.00,-16.20){};
        \node[Feuille](15)at(12.00,-16.20){};
        \node[Feuille](16)at(13.00,-16.20){};
        \node[Feuille](18)at(14.00,-10.80){};
        \node[Feuille](19)at(15.00,-16.20){};
        \node[Feuille](2)at(2.00,-21.60){};
        \node[Feuille](21)at(17.00,-21.60){};
        \node[Feuille](23)at(19.00,-21.60){};
        \node[Feuille](24)at(20.00,-10.80){};
        \node[Feuille](26)at(22.00,-10.80){};
        \node[Feuille](4)at(3.00,-21.60){};
        \node[Feuille](6)at(5.00,-21.60){};
        \node[Feuille](7)at(6.00,-21.60){};
        \node[Feuille](9)at(8.00,-21.60){};
        \node[Noeud](1)at(1.00,-16.20){};
        \node[Noeud](10)at(9.00,-5.40){};
        \node[Noeud](12)at(15.00,0.00){};
        \node[Noeud](14)at(12.00,-10.80){};
        \node[Noeud](17)at(14.00,-5.40){};
        \node[Noeud](20)at(16.00,-10.80){};
        \node[Noeud](22)at(18.00,-16.20){};
        \node[Noeud](25)at(21.00,-5.40){};
        \node[Noeud](3)at(4.00,-10.80){};
        \node[Noeud](5)at(4.00,-16.20){};
        \node[Noeud](8)at(7.00,-16.20){};
        \draw[Arete](0)--(1);
        \draw[Arete](1)edge[]node[EtiqArete]{\begin{math}4\end{math}}(3);
        \draw[Arete](10)edge[]node[EtiqArete]{\begin{math}2\end{math}}(12);
        \draw[Arete](11)--(10);
        \draw[Arete](13)--(14);
        \draw[Arete](14)edge[]node[EtiqArete]{\begin{math}1\end{math}}(17);
        \draw[Arete](15)--(14);
        \draw[Arete](16)--(14);
        \draw[Arete](17)edge[]node[EtiqArete]{\begin{math}4\end{math}}(12);
        \draw[Arete](18)--(17);
        \draw[Arete](19)--(20);
        \draw[Arete](2)--(1);
        \draw[Arete](20)edge[]node[EtiqArete]{\begin{math}2\end{math}}(17);
        \draw[Arete](21)--(22);
        \draw[Arete](22)edge[]node[EtiqArete]{\begin{math}4\end{math}}(20);
        \draw[Arete](23)--(22);
        \draw[Arete](24)--(25);
        \draw[Arete](25)edge[]node[EtiqArete]{\begin{math}4\end{math}}(12);
        \draw[Arete](26)--(25);
        \draw[Arete](3)edge[]node[EtiqArete]{\begin{math}2\end{math}}(10);
        \draw[Arete](4)--(5);
        \draw[Arete](5)edge[]node[EtiqArete]{\begin{math}1\end{math}}(3);
        \draw[Arete](6)--(5);
        \draw[Arete](7)--(8);
        \draw[Arete](8)edge[]node[EtiqArete]{\begin{math}4\end{math}}(3);
        \draw[Arete](9)--(8);
        \node(r)at(15.00,3.5){};
        \draw[Arete](r)--(12);
    \end{tikzpicture}
    \end{split}
\end{equation}
is a $4$-edge valued Schröder tree and a basis element of $\TDendr_4(16)$.
\medskip

We deduce from Proposition~\ref{prop:serie_hilbert_tdendr_gamma} that
\begin{equation}
    \Hca_{\TDendr_\gamma}(t) =
    \frac{1 - \sqrt{1 - (4\gamma + 2)t + t^2} - (2\gamma + 1)t}
    {2(\gamma + \gamma^2)t}.
\end{equation}
Moreover, we obtain that for all $n \geq 1$,
\begin{equation}
    \dim \TDendr_\gamma(n) =
    \sum_{k = 0}^{n - 1} (\gamma + 1)^k \gamma^{n - k - 1} \, \Nar(n, k),
\end{equation}
where $\Nar(n, k)$ is defined in~\eqref{equ:definition_narayana}.
For instance, the first dimensions of $\TDendr_1$, $\TDendr_2$,
$\TDendr_3$, and $\TDendr_4$ are respectively
\begin{equation}
    1, 3, 11, 45, 197, 903, 4279, 20793, 103049, 518859, 2646723,
\end{equation}
\begin{equation}
    1, 5, 31, 215, 1597, 12425, 99955, 824675, 6939769, 59334605, 513972967,
\end{equation}
\begin{equation}
    1, 7, 61, 595, 6217, 68047, 770149, 8939707, 105843409,
    1273241431, 15517824973,
\end{equation}
\begin{equation}
    1, 9, 101, 1269, 17081, 240849, 3511741, 52515549, 801029681,
    12414177369, 194922521301.
\end{equation}
The first one is Sequence~\Sloane{A001003} of~\cite{Slo}. The others
sequences are not listed in~\cite{Slo} at this time.
\medskip

\subsection{Operads of the operadic butterfly}
The {\em operadic butterfly}~\cite{Lod01,Lod06} is a diagram gathering
seven famous operads. We have seen in
Section~\ref{subsec:diagramme_dias_as_dendr_gamma} that this diagram
gathers the diassociative, associative, and dendriform operads. It
involves also the {\em commutative operad} $\Com$, the {\em Lie operad}
$\Lie$, the {\em Zinbiel operad} $\Zin$~\cite{Lod95}, and
the {\em Leibniz operad} $\Leib$~\cite{Lod93}. It is of the form
\begin{equation} \label{equ:diagramme_papillon}
    \begin{split}
    \begin{tikzpicture}[xscale=.7,yscale=.65]
        \node(Dendr)at(-2,2){\begin{math}\Dendr\end{math}};
        \node(As)at(0,0){\begin{math}\As\end{math}};
        \node(Dias)at(2,2){\begin{math}\Dias\end{math}};
        \node(Com)at(-2,-2){\begin{math}\Com\end{math}};
        \node(Lie)at(2,-2){\begin{math}\Lie\end{math}};
        \node(Zin)at(-4,0){\begin{math}\Zin\end{math}};
        \node(Leib)at(4,0){\begin{math}\Leib\end{math}};
        \draw[->](Dias)--(As);
        \draw[->](As)--(Dendr);
        \draw[->](Dendr)--(Zin);
        \draw[->](Leib)--(Dias);
        \draw[->](Com)--(Zin);
        \draw[->](As)--(Com);
        \draw[->](Lie)--(As);
        \draw[->](Leib)--(Lie);
        \draw[<->,dotted,loop above,looseness=13](As)
            edge node[anchor=south]{\begin{math} ! \end{math}}(As);
        \draw[<->,dotted](Dendr)
            edge node[anchor=south]{\begin{math} ! \end{math}}(Dias);
        \draw[<->,dotted](Com)
            edge node[anchor=south]{\begin{math} ! \end{math}}(Lie);
        \draw[<->,dotted,loop below,looseness=.3](Zin)
            edge node[anchor=south]{\begin{math} ! \end{math}}(Leib);
    \end{tikzpicture}
    \end{split}
\end{equation}
and as it shows, some operads are Koszul dual of some others
(in particular, $\Com^! = \Lie$ and $\Zin^! = \Leib$).
\medskip

We have to emphasize the fact the operads $\Com$, $\Lie$, $\Zin$, and
$\Leib$ of the operadic butterfly are symmetric operads. The computation
of the Koszul dual of a symmetric operad does not follows what we have
presented in Section~\ref{subsec:dual_de_Koszul}. We invite the
reader to consult~\cite{GK94} or~\cite{LV12} for a complete
description.
\medskip

For simplicity, in what follows, we shall consider algebras over
symmetric operads instead of symmetric operads.
\medskip

\subsubsection{A generalization of the operadic butterfly}
A possible continuation to this work consists in constructing a diagram
\begin{equation} \label{equ:diagramme_papillon_gamma}
    \begin{split}
    \begin{tikzpicture}[xscale=.7,yscale=.65]
        \node(Dendr)at(-4,2){\begin{math}\Dendr_\gamma\end{math}};
        \node(As)at(1.5,0){\begin{math}\As_\gamma\end{math}};
        \node(DAs)at(-1.5,0){\begin{math}\DAs_\gamma\end{math}};
        \node(Dias)at(4,2){\begin{math}\Dias_\gamma\end{math}};
        \node(Com)at(-4,-2){\begin{math}\Com_\gamma\end{math}};
        \node(Lie)at(4,-2){\begin{math}\Lie_\gamma\end{math}};
        \node(Zin)at(-6,0){\begin{math}\Zin_\gamma\end{math}};
        \node(Leib)at(6,0){\begin{math}\Leib_\gamma\end{math}};
        \draw[->](Dias)--(As);
        \draw[->](DAs)--(Dendr);
        \draw[->](Dendr)--(Zin);
        \draw[->](Leib)--(Dias);
        \draw[->](Com)--(Zin);
        \draw[->](DAs)--(Com);
        \draw[->](Lie)--(As);
        \draw[->](Leib)--(Lie);
        \draw[<->,dotted](As)
            edge node[anchor=south]{\begin{math} ! \end{math}}(DAs);
        \draw[<->,dotted](Dendr)
            edge node[anchor=south]{\begin{math} ! \end{math}}(Dias);
        \draw[<->,dotted](Com)
            edge node[anchor=south]{\begin{math} ! \end{math}}(Lie);
        \draw[<->,dotted,loop below,looseness=.2](Zin)
            edge node[anchor=south]{\begin{math} ! \end{math}}(Leib);
    \end{tikzpicture}
    \end{split}
\end{equation}
where $\DAs_\gamma$ is the $\gamma$-dual multiassociative operad defined
in Section~\ref{subsubsec:das_gamma} and $\Com_\gamma$, $\Lie_\gamma$,
$\Zin_\gamma$, and $\Leib_\gamma$, respectively are generalizations on a
nonnegative integer parameter $\gamma$ of the operads $\Com$, $\Lie$,
$\Zin$, and $\Leib$. Let us now define these operads.
\medskip

\subsubsection{Commutative and Lie operads} \label{subsubsec:com_gamma}
The symmetric operad $\Com$ is the symmetric operad describing the
category of algebras $\Cca$ with one binary operation $\MDAs$, subjected
for any elements $x$, $y$, and $z$ of $\Cca$ to the two relations
\begin{subequations}
\begin{equation}
    x \MDAs y = y \MDAs x,
\end{equation}
\begin{equation}
    (x \MDAs y) \MDAs z = x \MDAs (y \MDAs z).
\end{equation}
\end{subequations}
This operad has the property to be a commutative version of $\As = \DAs_1$.
\medskip

We define the symmetric operad $\Com_\gamma$ by using the same idea of
being a commutative version of $\DAs_\gamma$. Therefore, $\Com_\gamma$
is the symmetric operad describing the category of algebras $\Cca$ with
binary operations $\MDAs_a$, $a \in [\gamma]$, subjected for any elements
$x$, $y$, and $z$ of $\Cca$ to the two sorts of relations
\begin{subequations}
\begin{equation} \label{equ:relation_com_gamma_1}
    x \MDAs_a y = y \MDAs_a x,
    \qquad a \in [\gamma],
\end{equation}
\begin{equation} \label{equ:relation_com_gamma_2}
    (x \MDAs_a y) \MDAs_a z = x \MDAs_a (y \MDAs_a z),
    \qquad a \in [\gamma].
\end{equation}
\end{subequations}
Moreover, we define the symmetric operad $\Lie_\gamma$ as the Koszul dual
of $\Com_\gamma$.
\medskip

\subsubsection{Zinbiel and Leibniz operads} \label{subsubsec:zin_gamma}
The symmetric operad $\Zin$ is the symmetric operad describing
the category of algebras $\Zca$ with one generating binary operation
$\ProdZin$, subjected for any elements $x$, $y$, and $z$ of $\Zca$ to
the relation
\begin{equation} \label{equ:relation_zinbiel}
    (x \ProdZin y) \ProdZin z =
    x \ProdZin (y \ProdZin z) + x \ProdZin (z \ProdZin y).
\end{equation}
This operad has the property to be a commutative version of
$\Dendr = \Dendr_1$. Indeed, Relation~\eqref{equ:relation_zinbiel} is
obtained from Relations~\eqref{equ:relation_dendr_1},
\eqref{equ:relation_dendr_2}, and~\eqref{equ:relation_dendr_3} of
dendriform algebras with the condition that for any elements $x$ and $y$,
$x \GDendr y = y \DDendr x$, and by setting $x \ProdZin y := x \GDendr y$.
\medskip

We define the symmetric operad $\Zin_\gamma$ by using the same idea of
having the property to be a commutative version of $\Dendr_\gamma$.
Therefore, $\Zin_\gamma$ is the symmetric operad describing the
category of algebras $\Zca$ with binary operations $\ProdZin_a$,
$a \in [\gamma]$, subjected for any elements $x$, $y$, and $z$ of $\Zca$
to the relation
\begin{equation} \label{equ:relation_zinbiel_gamma}
    (x \ProdZin_{a'} y) \ProdZin_a z =
    x \ProdZin_{a \Min a'} (y \ProdZin_a z) +
    x \ProdZin_{a \Min a'} (z \ProdZin_{a'} y),
    \qquad a, a' \in [\gamma].
\end{equation}
Relation~\eqref{equ:relation_zinbiel_gamma} is obtained
from Relations~\eqref{equ:relation_dendr_gamma_1_concise},
\eqref{equ:relation_dendr_gamma_2_concise},
and~\eqref{equ:relation_dendr_gamma_3_concise} of
$\gamma$-polydendriform algebras with the condition that for any elements
$x$ and $y$ and $a \in [\gamma]$, $x \GDendr_a y = y \DDendr_a x$, and
by setting $x \ProdZin_a y := x \GDendr_a y$. Moreover, we define the
symmetric operad $\Leib_\gamma$ as the Koszul dual of $\Zin_\gamma$.
\medskip

\begin{Proposition} \label{prop:morphism_com_zin_gamma}
    For any integer $\gamma \geq 0$ and any $\Zin_\gamma$-algebra
    $\Zca$, the binary operations $\MDAs_a$, $a \in [\gamma]$, defined
    for all elements $x$ and $y$ of $\Zca$ by
    \begin{equation}
        x \MDAs_a y := x \ProdZin_a y + y \ProdZin_a x,
        \qquad a \in [\gamma],
    \end{equation}
    endow $\Zca$ with a $\Com_\gamma$-algebra structure.
\end{Proposition}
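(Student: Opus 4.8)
The plan is to verify directly that the operations $\MDAs_a$, $a \in [\gamma]$, satisfy the two defining relations \eqref{equ:relation_com_gamma_1} and \eqref{equ:relation_com_gamma_2} of $\Com_\gamma$. The commutativity relation \eqref{equ:relation_com_gamma_1} is immediate: by definition one has $x \MDAs_a y = x \ProdZin_a y + y \ProdZin_a x$, an expression visibly symmetric in $x$ and $y$, so that $x \MDAs_a y = y \MDAs_a x$ holds for every $a \in [\gamma]$ and all $x, y \in \Zca$.

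The real content is the associativity relation \eqref{equ:relation_com_gamma_2}, which I would establish by expanding both sides over the Zinbiel operations and comparing them term by term. The key observation is that both occurrences of $\MDAs$ carry the same index $a$; consequently every application of the Zinbiel relation \eqref{equ:relation_zinbiel_gamma} is used in the special case $a' = a$, where $a \Min a = a$, so no interference from the $\Min$ operation ever arises and all indices stay equal to $a$ throughout. Concretely, on the left-hand side $(x \MDAs_a y) \MDAs_a z$ I would first expand the outer $\MDAs_a$ into $(x \MDAs_a y) \ProdZin_a z + z \ProdZin_a (x \MDAs_a y)$, then expand the inner $\MDAs_a$, and finally rewrite the two terms of shape $(\,\cdot\, \ProdZin_a \,\cdot\,) \ProdZin_a z$ by means of \eqref{equ:relation_zinbiel_gamma}. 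Applying the same procedure to the right-hand side $x \MDAs_a (y \MDAs_a z)$ rewrites instead the two terms of shape $(\,\cdot\, \ProdZin_a \,\cdot\,) \ProdZin_a x$.

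After these expansions, both sides reduce to the sum of the six ``left-normed'' products $w_1 \ProdZin_a (w_2 \ProdZin_a w_3)$, taken over all six permutations $(w_1, w_2, w_3)$ of $(x, y, z)$, each appearing exactly once. Since the two resulting families of six terms coincide, \eqref{equ:relation_com_gamma_2} holds in $\Zca$, and together with commutativity this endows $\Zca$ with a $\Com_\gamma$-algebra structure. I expect no genuine obstacle here: the only care required is the bookkeeping of the six symmetrized terms, and this is precisely the parametrized analogue of the classical fact that a Zinbiel algebra becomes a commutative associative algebra under the symmetrized product, carried out independently for each fixed parameter $a \in [\gamma]$.
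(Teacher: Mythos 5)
Your proposal is correct and takes essentially the same route as the paper: a direct verification of Relations~\eqref{equ:relation_com_gamma_1} and~\eqref{equ:relation_com_gamma_2} using the Zinbiel relation~\eqref{equ:relation_zinbiel_gamma} in the special case $a' = a$, where $a \Min a = a$ keeps all indices fixed. The only difference is organizational --- you reduce both sides of the associativity relation to the common normal form consisting of the six left-normed products $w_1 \ProdZin_a (w_2 \ProdZin_a w_3)$ over all permutations of $(x,y,z)$, whereas the paper cancels terms stepwise --- and your six-term claim checks out exactly.
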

\begin{proof}
    Since for all $a \in [\gamma]$ and all elements $x$ and $y$ of $\Zca$,
    by~\eqref{equ:relation_zinbiel_gamma}, we have
    \begin{equation}
        x \MDAs_a y - y \MDAs_a x =
        x \ProdZin_a y + y \ProdZin_a x - y \ProdZin_a x - x \ProdZin_a y
        = 0,
    \end{equation}
    the operations $\MDAs_a$ satisfy Relation~\eqref{equ:relation_com_gamma_1}
    of $\Com_\gamma$-algebras. Moreover, since for all $a \in [\gamma]$
    and all elements $x$, $y$, and $z$ of $\Zca$,
    by~\eqref{equ:relation_zinbiel_gamma}, we have
    \begin{equation}\begin{split}
        (x \MDAs_a y) \MDAs_a z
        & - x \MDAs_a (y \MDAs_a z) \\
        & =
        (x \ProdZin_a y + y \ProdZin_a x) \ProdZin_a z +
        z \ProdZin_a (x \ProdZin_a y + y \ProdZin_a x) \\
        & \qquad - x \ProdZin_a (y \ProdZin_a z + z \ProdZin_a y)
        - (y \ProdZin_a z + z \ProdZin_a y) \ProdZin_a x \\
        & = (x \ProdZin_a y) \ProdZin_a z + (y \ProdZin_a x) \ProdZin_a z
        + z \ProdZin_a (x \ProdZin_a y) + z \ProdZin_a (y \ProdZin_a x) \\
        & \qquad - x \ProdZin_a (y \ProdZin_a z) - x \ProdZin_a (z \ProdZin_a y)
        - (y \ProdZin_a z) \ProdZin_a x - (z \ProdZin_a y) \ProdZin_a x \\
        & = (y \ProdZin_a x) \ProdZin_a z - (y \ProdZin_a z) \ProdZin_a x \\
        & = y \ProdZin_a (x \ProdZin_a z) + y \ProdZin_a (z \ProdZin_a x)
        - y \ProdZin_a (z \ProdZin_a x) - y \ProdZin_a (x \ProdZin_a z) \\
        & = 0,
    \end{split}\end{equation}
    the operations $\MDAs_a$ satisfy Relation~\eqref{equ:relation_com_gamma_2}
    of $\Com_\gamma$-algebras.
    Hence, $\Zca$ is a $\Com_\gamma$-algebra.
\end{proof}
\medskip

\begin{Proposition} \label{prop:morphism_dendr_zin_gamma}
    For any integer $\gamma \geq 0$, and any $\Zin_\gamma$-algebra
    $\Zca$, the binary operations $\GDendr_a$, $\DDendr_a$,
    $a \in [\gamma]$ defined for all elements $x$ and $y$ of $\Zca$ by
    \begin{equation}
        x \GDendr_a y := x \ProdZin_a y,
        \qquad a \in [\gamma],
    \end{equation}
    and
    \begin{equation}
        x \DDendr_a y := y \ProdZin_a x,
        \qquad a \in [\gamma],
    \end{equation}
    endow $\Zca$ with a $\gamma$-polydendriform algebra structure.
\end{Proposition}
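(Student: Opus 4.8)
The plan is to verify that, under the definitions $x \GDendr_a y := x \ProdZin_a y$ and $x \DDendr_a y := y \ProdZin_a x$, the operations $\GDendr_a, \DDendr_a$, $a \in [\gamma]$, satisfy the three concise $\gamma$-polydendriform relations \eqref{equ:relation_dendr_gamma_1_concise}, \eqref{equ:relation_dendr_gamma_2_concise}, and \eqref{equ:relation_dendr_gamma_3_concise}. First I would translate each relation into an identity among operations evaluated on three generic elements $x, y, z$ of $\Zca$, using the convention that $\circ_1$ (resp. $\circ_2$) substitutes into the first (resp. second) argument of a binary operation, and then substitute the definitions. For \eqref{equ:relation_dendr_gamma_2_concise} this yields
\begin{equation*}
(x \GDendr_{a'} y) \GDendr_a z = x \GDendr_{a \Min a'} (y \GDendr_a z) + x \GDendr_{a \Min a'} (y \DDendr_{a'} z),
\end{equation*}
which after substitution becomes
\begin{equation*}
(x \ProdZin_{a'} y) \ProdZin_a z = x \ProdZin_{a \Min a'} (y \ProdZin_a z) + x \ProdZin_{a \Min a'} (z \ProdZin_{a'} y),
\end{equation*}
that is, exactly the Zinbiel relation \eqref{equ:relation_zinbiel_gamma}. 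Hence \eqref{equ:relation_dendr_gamma_2_concise} holds on the nose.

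Next I would treat \eqref{equ:relation_dendr_gamma_1_concise} and \eqref{equ:relation_dendr_gamma_3_concise}, which are not literally \eqref{equ:relation_zinbiel_gamma} but follow from a single application of it. After translating and substituting, \eqref{equ:relation_dendr_gamma_1_concise} reduces to the claim $(y \ProdZin_{a'} x) \ProdZin_a z = (y \ProdZin_a z) \ProdZin_{a'} x$; expanding each side by \eqref{equ:relation_zinbiel_gamma} and using that $\Min$ is commutative ($a \Min a' = a' \Min a$) shows that both sides equal $y \ProdZin_{a \Min a'}(x \ProdZin_a z) + y \ProdZin_{a \Min a'}(z \ProdZin_{a'} x)$. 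Likewise, \eqref{equ:relation_dendr_gamma_3_concise} reduces to $z \ProdZin_{a \Min a'}(x \ProdZin_{a'} y) + z \ProdZin_{a \Min a'}(y \ProdZin_a x) = (z \ProdZin_{a'} y) \ProdZin_a x$, which is precisely \eqref{equ:relation_zinbiel_gamma} applied to its right-hand side. This verifies all three defining relations, so $\Zca$ is a $\gamma$-polydendriform algebra.

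The construction is the inverse of the passage recorded just before the statement: a $\gamma$-polydendriform algebra satisfying $x \GDendr_a y = y \DDendr_a x$ produces a $\Zin_\gamma$-algebra via $x \ProdZin_a y := x \GDendr_a y$, whereas here I reconstruct the two operations $\GDendr_a$ and $\DDendr_a$ from the single Zinbiel product. There is no genuine obstacle, as the argument is pure substitution; the only points requiring care are the bookkeeping of the operadic composition convention (which argument each $\circ_i$ fills) and the tracking of the $\Min$ subscripts through \eqref{equ:relation_zinbiel_gamma}, where the commutativity of $\Min$ ensures that the side on which the relation is expanded is immaterial.
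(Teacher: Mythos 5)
Your proposal is correct and follows essentially the same route as the paper: a direct element-level verification of the three concise relations \eqref{equ:relation_dendr_gamma_1_concise}--\eqref{equ:relation_dendr_gamma_3_concise}, observing that \eqref{equ:relation_dendr_gamma_2_concise} becomes the relation \eqref{equ:relation_zinbiel_gamma} on the nose, that \eqref{equ:relation_dendr_gamma_1_concise} follows by expanding both sides of $(y \ProdZin_{a'} x) \ProdZin_a z = (y \ProdZin_a z) \ProdZin_{a'} x$ via \eqref{equ:relation_zinbiel_gamma} and the commutativity of $\Min$, and that \eqref{equ:relation_dendr_gamma_3_concise} follows by expanding its right-hand side once. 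Your bookkeeping of the $\circ_i$ convention and of the $\Min$ subscripts matches the paper's computations exactly, so there is nothing to correct.
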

\begin{proof}
    Since, for all $a, a' \in [\gamma]$ and all elements $x$, $y$,
    and $z$ of $\Zca$, by~\eqref{equ:relation_zinbiel_gamma}, we have
    \begin{equation}\begin{split}
        (x \DDendr_{a'} y) \GDendr_a z
            & - x \DDendr_{a'} (y \GDendr_a z) \\
        & = (y \ProdZin_{a'} x) \ProdZin_a z
            - (y \ProdZin_a z) \ProdZin_{a'} x \\
        & = y \ProdZin_{a \Min a'} (x \ProdZin_a z)
            + y \ProdZin_{a \Min a'} (z \ProdZin_{a'} x)
            - y \ProdZin_{a \Min a'} (z \ProdZin_{a'} x)
            - y \ProdZin_{a \Min a'} (x \ProdZin_a z) \\
        & = 0,
    \end{split}\end{equation}
    the operations $\GDendr_a$ and $\DDendr_a$ satisfy
    Relation~\eqref{equ:relation_dendr_gamma_1_concise} of
    $\gamma$-polydendriform algebras. Moreover, since for all
    $a, a' \in [\gamma]$ and all elements $x$, $y$,
    and $z$ of $\Zca$, by~\eqref{equ:relation_zinbiel_gamma}, we have
    \begin{equation}\begin{split}
        (x \GDendr_{a'} y) \GDendr_a z
            & - x \GDendr_{a \Min a'} (y \GDendr_a z)
            - x \GDendr_{a \Min a'} (y \DDendr_{a'} z) \\
        & = (x \ProdZin_{a'} y) \ProdZin_a z
            - x \ProdZin_{a \Min a'} (y \ProdZin_a z)
            - x \ProdZin_{a \Min a'} (z \ProdZin_{a'} y) \\
        & = x \ProdZin_{a \Min a'} (y \ProdZin_a z)
            + x \ProdZin_{a \Min a'} (z \ProdZin_{a'} y)
            - x \ProdZin_{a \Min a'} (y \ProdZin_a z)
            - x \ProdZin_{a \Min a'} (z \ProdZin_{a'} y) \\
        & = 0,
    \end{split}\end{equation}
    the operations $\GDendr_a$ and $\DDendr_a$ satisfy
    Relation~\eqref{equ:relation_dendr_gamma_2_concise} of
    $\gamma$-polydendriform algebras. Finally, since for all
    $a, a' \in [\gamma]$ and all elements $x$, $y$,
    and $z$ of $\Zca$, we have
    \begin{equation}\begin{split}
        (x \GDendr_{a'} y) \DDendr_{a \Min a'} z
            & + (x \DDendr_a y) \DDendr_{a \Min a'} z
            - x \DDendr_a (y \DDendr_{a'} z) \\
        & = z \ProdZin_{a \Min a'} (x \ProdZin_{a'} y)
            + z \ProdZin_{a \Min a'} (y \ProdZin_a x)
            - (z \ProdZin_{a'} y) \ProdZin_a x \\
        & = z \ProdZin_{a \Min a'} (x \ProdZin_{a'} y)
            + z \ProdZin_{a \Min a'} (y \ProdZin_a x)
            - z \ProdZin_{a \Min a'} (y \ProdZin_a x)
            - z \ProdZin_{a \Min a'} (x \ProdZin_{a'} y) \\
        & = 0,
    \end{split}\end{equation}
    the operations $\GDendr_a$ and $\DDendr_a$ satisfy
    Relation~\eqref{equ:relation_dendr_gamma_3_concise} of
    $\gamma$-polydendriform algebras. Hence $\Zca$ is a
    $\gamma$-polydendriform algebra.
\end{proof}
\medskip

The constructions stated by Propositions~\ref{prop:morphism_com_zin_gamma}
and~\ref{prop:morphism_dendr_zin_gamma} producing from a
$\Zin_\gamma$-algebra respectively a $\Com_\gamma$-algebra and a
$\gamma$-polydendriform algebra are functors from the category of
$\Zin_\gamma$-algebras respectively to the category of
$\Com_\gamma$-algebras and the category of $\gamma$-polydendriform algebras.
These functors respectively translate into symmetric operad morphisms
from $\Com_\gamma$ to $\Zin_\gamma$ and from $\Dendr_\gamma$ to
$\Zin_\gamma$. These morphisms are generalizations of known morphisms
between $\Com$, $\Dendr$, and $\Zin$ of~\eqref{equ:diagramme_papillon}
(see~\cite{Lod01,Lod06,Zin12}).
\medskip

A complete study of the operads $\Com_\gamma$, $\Lie_\gamma$,
$\Zin_\gamma$, and $\Leib_\gamma$, and suitable definitions for all the
morphisms intervening in~\eqref{equ:diagramme_papillon_gamma} is worth
to interest for future works.
\medskip

\bibliographystyle{alpha}
\bibliography{Bibliographie}

\end{document}